\def\Picardgroupoid{Picard groupoid}
\def\Picardgroupoids{Picard groupoids}
\def\exact{distinguished}
\def\Exact{Distinguished}
\def\Picardcategory{\Picardgroupoid}\def\Picardcategories{\Picardgroupoids}
\newtheorem{thm}[equation]{Theorem}
\newtheorem{cor}[equation]{Corollary}
\newtheorem{lem}[equation]{Lemma}
\newtheorem{prop}[equation]{Proposition}
\theoremstyle{definition}
\newtheorem{defn}[equation]{Definition}
\theoremstyle{remark}
\newtheorem{rem}[equation]{Remark}
\newtheorem{exm}[equation]{Example}
\newcommand{\abs}[1]{\left\vert#1\right\vert}
\newcommand{\eps}{\varepsilon}
\newcommand{\To }{\longrightarrow}
\def\filledsquare{{\scriptscriptstyle 
\blacklozenge}}
\newcommand{\C}[1]{\mathscr{#1}}
\newcommand{\D}[1]{\mathcal{#1}}
\newcommand{\DD}[1]{\mathcal{#1}^{\mathrm{der}}}
\newcommand{\der}{\mathbb{D}}
\newcommand{\EZDIAG}[5]{\xymatrix@C+=1.5cm{*+[r]{#1}
\ar@(u,l)_(0.62){\scriptstyle #5}[]
\ar@<.5ex>^-{#3}[r]&\ar@<.5ex>^-{#4}[l]#2}}
\def\op{\mathrm{op}}
\def\tr{\mathrm{tr}}
\def\abb{{ab}}
\def\r{\rightarrow} 
\def\l{\leftarrow} 
\def\rr{\Rightarrow} 
\def\into{\rightarrowtail}
\def\onto{\twoheadrightarrow}
\def\hom{\operatorname{Hom}}
\def\ho{\operatorname{Ho}}
\newcommand{\iso}[1]{\operatorname{iso}(#1)}
\def\cof{\operatorname{cof}}
\def\we{\operatorname{we}}
\def\ext{\operatorname{Ext}}
\def\aut{\operatorname{Aut}}
\def\rank{\operatorname{rk}}
\def\X{X}
\def\Y{Y}
\def\Z{Z}
\newcommand{\cone}[1]{C^{#1}}
\renewcommand{\det}{\operatorname{det}}
\def\st{\stackrel} 
\def\coker{\operatorname{Coker}}
\renewcommand{\ker}{\operatorname{Ker}}
\newcommand{\im}{\operatorname{Im}}
\def\ZZ{\mathbb{Z}}
\def\abg#1{\langle #1\rangle^\abb}
\def\nig#1{\langle #1\rangle^{nil}}
\def\we{\mathrm{we}}
\newcommand{\grupo}[1]{\langle #1\rangle}
\numberwithin{equation}{subsection}
\newcommand{\ass}{\text{ass.}}
\newcommand{\com}{\text{comm.}}
\newcommand{\comm}[3]{\text{comm}_{#1,#2}}
\newcommand{\hp}[3]{\text{mult}_{#1,#2}}
\newcommand{\hpp}{\text{mult.}}
\newcommand{\up}[1]{\text{unit.}}
\renewcommand{\sp}{{\_\,}}
\def\tetra#1#2#3#4#5{{\entrymodifiers={+0}\xymatrix@!R=2pc@!C=4pc{
    #5\\#5\\#5\\#5
    \ar@{-}"#1";"#2"|*+{\scriptscriptstyle A}="a"
    \ar@{-}"#1";"#4"|*+{\scriptscriptstyle C}="c"
    \ar@{-}"#2";"#3"|*+{\scriptscriptstyle B/A}="ba"
    \ar@{-}"#2";"#4"|*+{\scriptscriptstyle C/A}="ca"
    \ar@{-}"#3";"#4"|*+{\scriptscriptstyle C/B}="cb"
    \ar@{--}"#1";"#3"|!{"a";"c"}\hole|*+{\scriptscriptstyle B}="b"
                      |!{"#2";"#4"}\hole|!{"ba";"ca"}\hole
    \ar@{ -->> }"b";"ba"|!{"#2";"#4"}\hole
    \ar@{ -->> }"c";"cb"|!{"#2";"#4"}\hole
    \ar@{ >--> }"a";"b"
    \ar@{ >-> } "a";"c"
    \ar@{ >--> }"b";"c"
    \ar@{ >-> } "ba";"ca"
    \ar@{ ->> } "c";"ca"
    \ar@{ ->> } "ca";"cb"}}}
\def\tetri#1#2#3#4#5{{\entrymodifiers={+0}\xymatrix@!R=1pc@!C=2pc{
    #5\\#5\\#5\\#5
    \ar@{}"#1";"#2"|*+{A}="a"
    \ar@{}"#1";"#4"|*+{C}="c"
    \ar@{}"#2";"#3"|*+{B/A}="ba"
    \ar@{}"#2";"#4"|*+{C/A}="ca"
    \ar@{}"#3";"#4"|*+{C/B}="cb"
    \ar@{}"#1";"#3"|!{"a";"c"}\hole|*+{B}="b"
                      |!{"#2";"#4"}\hole|!{"ba";"ca"}\hole
    \ar@{->>}"b";"ba" 
    \ar@{->>}"c";"cb" 
    \ar@{>->}"a";"b"_-f
    \ar@{>->} "a";"c"^-{gf}
    \ar@{>->}"b";"c"^-g
    \ar@{>->} "ba";"ca"
    \ar@{->>} "c";"ca"
    \ar@{->>} "ca";"cb"}}}
\def\ob{\operatorname{Ob}}
\def\nattrans{\Rightarrow}
\def\arDEF{\def\arAB{}\def\arAC{}\def\arBC{}\def\arBD{}\def\arCF{}\def\arDE{}\def\arEF{}\def\arCE{}\def\arEA{}\def\arFD{}\def\arEF{}\def\arFB{}\def\arDA{}\def\arSIZE{\scriptstyle}}
\def\minusone{\scriptscriptstyle+1}
\newcommand\octahedron[1]{\mbox{\arDEF\setkeys{tetravar}{#1}
\hspace*{-33mm}$\begin{array}[m]{c}\\[-22mm]
\entrymodifiers={+0}\vcenter{\xymatrix@R=5.8pc@C=4.6pc{
    &&&&&\\&&&&&\\&&&&&\\&&&&&
    \ar@{}"3,1";"4,4"|*+{\objA}="a"
    \ar@{}"3,1";"1,4"|*+{\objC}="c"
    \ar@{}"4,4";"3,6"|*+{\objD}="ba"
    \ar@{}"4,4";"1,4"|*+{\objE}="ca"
    \ar@{}"3,6";"1,4"|*+{\objF}="cb"
    \ar@{}"3,1";"3,6"|!{"a";"c"}\hole|<(.6)*+{\objB}="b"
                      |!{"4,4";"1,4"}\hole|!{"ba";"ca"}\hole
    \ar@{.>}"b";"ba"_-{\arBD} 
    \ar"c";"cb"^-{\arCF} 
    \ar@{.>}"a";"b"_(.6){\!\!\arAB}
    \ar"a";"c"^-{\arAC}
    \ar"ba";"ca"_(.4){\!\!\!\arDE}
    \ar"c";"ca"^(0.65){\!\!\!\!\arCE}
    \ar"ca";"cb"^(.4){\arEF\!\!\!\!}
    \ar"cb";"ba"^<(.6){\arFD}|{\minusone}
    \ar"ba";"a"|{\minusone}^(.6){\arDA}
    \ar"ca";"a"_<(.6){\arEA}|{\minusone}
    \ar@{.>}"b";"c"^-{\arBC}|!{"ca";"a"}{\hole}
    \ar@{.>}"cb";"b"^(.55){\!\!\!\arFB}|(.4){\minusone}|!{"ba";"ca"}{\hole}
}}
\\[-7mm]\end{array}$\hspace*{-20mm}}}
\newcommand\octahedronles[1]{\mbox{\arDEF\setkeys{tetravar}{#1}
\hspace*{-33mm}$\begin{array}[m]{c}\\[-22mm]
\entrymodifiers={+0}\vcenter{\xymatrix@R=5.8pc@C=4.6pc{
    &&&&&\\&&&&&\\&&&&&\\&&&&&
    \ar@{}"3,1";"4,4"|*+{\objA}="a"
    \ar@{}"3,1";"1,4"|*+{\objC}="c"
    \ar@{}"4,4";"3,6"|*+{\objD}="ba"
    \ar@{}"4,4";"1,4"|*+{\objE}="ca"
    \ar@{}"3,6";"1,4"|*+{\objF}="cb"
    \ar@{}"3,1";"3,6"|!{"a";"c"}\hole|<(.6)*+{\objB}="b"
                      |!{"4,4";"1,4"}\hole|!{"ba";"ca"}\hole
    \ar@{.>}"b";"ba"_-{\arBD} 
    \ar"c";"cb"^-{\arCF} 
    \ar@{.>}"a";"b"_(.6){\!\!\arAB}
    \ar"a";"c"^-{\arAC}
    \ar"ba";"ca"_(.4){\!\!\!\arDE}
    \ar"c";"ca"^(0.65){\!\!\!\!\arCE}
    \ar"ca";"cb"^(.4){\arEF\!\!\!\!}
    \ar"cb";"ba"^<(.6){\arFD}|{\minusone}
    \ar"ba";"a"|{\minusone}^(.6){\arDA}
    \ar"ca";"a"_<(.6){\arEA}|{\minusone}
    \ar@{.>}"b";"c"^-{\arBC}|!{"ca";"a"}{\hole}
    \ar@{.>}"cb";"b"^(.55){\!\!\!\arFB}|(.4){\minusone}|!{"ba";"ca"}{\hole}
}}
\\[-7mm]\end{array}$\hspace*{-20mm}}}
\newcommand\octahedronada[1]{\mbox{\arDEF\setkeys{tetravar}{#1}
\hspace*{-33mm}$\begin{array}[m]{c}\\[-22mm]
\entrymodifiers={+0}\vcenter{\xymatrix@R=5.8pc@C=4.6pc{
    &&&&&\\&&&&&\\&&&&&\\&&&&&
    \ar@{}"3,1";"4,4"|*+{\objA}="a"
    \ar@{}"3,1";"1,4"|*+{\objC}="c"
    \ar@{}"4,4";"3,6"|*+{\objD}="ba"
    \ar@{}"4,4";"1,4"|*+{\objE}="ca"
    \ar@{}"3,6";"1,4"|*+{\objF}="cb"
    \ar@{}"3,1";"3,6"|!{"a";"c"}\hole|<(.6)*+{\objB}="b"
                      |!{"4,4";"1,4"}\hole|!{"ba";"ca"}\hole
    \ar@{.>}"b";"ba"_-{\arBD} 
    \ar"c";"cb"^-{\arCF} 
    \ar@{.>}"a";"b"_(.6){\!\!\arAB}
    \ar"a";"c"^-{\arAC}
    \ar"ba";"ca"_(.4){\!\!\!\arDE}
    \ar"c";"ca"^(0.65){\!\!\!\!\arCE}
    \ar"ca";"cb"^(.4){\arEF\!\!\!\!}
    \ar"cb";"ba"^<(.6){\arFD}
    \ar"ba";"a"^(.6){\arDA}
    \ar"ca";"a"_<(.6){\arEA}
    \ar@{.>}"b";"c"^-{\arBC}|!{"ca";"a"}{\hole}
    \ar@{.>}"cb";"b"^(.55){\!\!\!\arFB}|!{"ba";"ca"}{\hole}
}}
\\[-7mm]\end{array}$\hspace*{-20mm}}}
\begin{document}



\title
{On determinant functors and $K$-theory}%
\author{Fernando Muro}%
\address{Universidad de Sevilla,
Facultad de Matem\'aticas,
Departamento de \'Algebra,
Avda. Reina Mercedes s/n,
41012 Sevilla, Spain}
\email{fmuro@us.es}
\author{Andrew Tonks}%
\address{STORM, School of Computing, London Metropolitan University, 166--220
Holloway Road, London N7 8DB, United Kingdom}
\email{a.tonks@londonmet.ac.uk}
\author{Malte Witte}%
\address{Ruprecht-Karls-Universit\"at Heidelberg,
Mathematisches Institut,
Im Neuenheimer Feld 288,
69120 Heidelberg, Germany}
\email{witte@mathi.uni-heidelberg.de}

\thanks{The first and second authors were partially supported
by the Spanish Ministry of Education and
Science under the MEC-FEDER grants  MTM2007-63277 and MTM2010-15831, and by the Government of Catalonia under the grant SGR-119-2009. The first author was also partially supported by the Spanish Ministry of Science and Innovation under a Ram\'on y Cajal research contract and by the Andalusian Ministry of Economy, Innovation and Science under the grant FQM-5713.}
\subjclass{19A99,  19B99,  18F25,  18G50,   	18G55,  18E10,   	18E30}
\keywords{determinant functor, $K$-theory, exact category, Waldhausen category, triangulated category, Grothendieck derivator}

\def\g#1{\save
[].[ddrr]!C="g#1"*+[F.]\frm{}\restore}

\def\gf#1{\save
[].[ddrr]!C="g#1"*+++[F.]\frm{}\restore}

\def\gfh#1{\save
[].[ddrr]!C="g#1"*+<.7cm>[F.]\frm{}\restore}

\begin{abstract}
We extend Deligne's notion of determinant functor to Waldhausen categories and (strongly) triangulated categories. We construct explicit universal determinant functors in each case, whose target is an algebraic model for the $1$-type of the corresponding $K$-theory spectrum. As applications, we answer open questions by Maltsiniotis and Neeman on the $K$-theory of (strongly) triangulated categories and a question of Grothendieck to Knudsen on determinant functors. We also prove additivity theorems for low-dimensional $K$-theory of (strongly) triangulated categories and obtain generators and (some) relations for various $K_{1}$-groups. This is achieved via a unified theory of determinant functors which can be applied in further contexts, such as derivators.
\end{abstract}

\maketitle
\tableofcontents



\section*{Introduction}

Determinant functors, considered first by Knudsen and Mumford \cite{pmssc}, categorify the usual notion of determinant of
invertible matrices. The most elementary instance of such a functor
sends a finite-dimensional vector space $V$ to the pair
\begin{align*}
\det V&=(\dim V,\wedge^{\dim V}V).
\end{align*}
The highest exterior power of an automorphism $f\colon V\cong V$ with matrix $A$ with respect to some basis is multiplication by the determinant, $\wedge^{\dim V}f=\det A$.


Deligne \cite{dc} axiomatised the properties of this functor in his definition of determinant functor $\det\colon \C E\r \C P$ on an exact category $\C{E}$ with values 
in  a \Picardgroupoid\  $\C{P}$. 
As a functor, $\det$ is only defined on isomorphisms, $\det\colon \iso{\C E}\r \C P$, but short exact sequences  $\X\into \Y\onto C$ induce natural isomorphisms
$$\det(C)\otimes\det(\X){\cong} \det(\Y),$$
called \emph{additivity data}, 
which must satisfy some coherence laws.

Deligne  constructed  a \Picardgroupoid\ of \emph{`virtual objects'}
$V(\C{E})$ which is the target of a {\em universal} determinant functor $\det\colon \C E\r V(\C E)$, in
the sense that any other determinant functor factors through this one in an essentially unique way.
The group of isomorphism classes of objects in 
$V(\C{E})$ is Quillen's 
$K_0(\C{E})$ and the automorphism group of the tensor unit is 
$K_1(\C{E})$. This shows that any interesting exact category has
highly non-trivial determinant functors.

Knudsen \cite{dfec} showed  by elementary
methods that determinant functors on an exact category $\C{E}$ extend
to the category of bounded complexes 
$C^b(\C{E})$ in an essentially unique way, generalising  results with Mumford~\cite{pmssc}. 
This extension is not only defined on isomorphisms, but on quasi-isomorphisms in $C^b(\C{E})$. 

Quasi-isomorphisms are the weak equivalences of a Waldhausen category structure on $C^b(\C{E})$. Therefore, Knudsen's extension theorem hints at  the existence of a theory of determinant functors  $\det\colon \C W\r \C P$ for Waldhausen categories $\C W$. This theory is developed in this paper. In particular, we construct a  universal determinant functor $\det\colon \C W\r V(\C W)$ where Waldhausen's $K_0(\C{W})$ is the group of isomorphism classes of objects in 
$V(\C{W})$ and 
$K_1(\C{W})$ is the automorphism group of the tensor unit. Knudsen's results follow then from the Gillet--Waldhausen isomorphism $K_{*}(\C E)\cong K_{*}(C^{b}(\C E))$. A theory of determinant functors for Waldhausen categories with values in \emph{strict} categorical groups has also been developed in \cite{malte}.

It is a common practice to pass from $C^{b}(\C E)$ to the derived category $D^{b}(\C E)$, inverting quasi-isomorphisms. The underlying functor $\det\colon \we(C^{b}(\C E))\r \C P$ of a determinant functor $\det\colon C^{b}(\C E)\r \C P$ factors uniquely through $\det\colon \iso{D^{b}(\C E)}\r \C P$. It would be desirable to enrich this functor with additivity data associated to \exact\ triangles  $\X\r \Y\r C\r\Sigma\X$ fitting into an appropriate notion of determinant functor for triangulated categories. 

Breuning defined recently a notion of determinant functor for triangulated categories. 
He showed that any triangulated category $\C{T}$ possesses a universal
determinant functor $\det\colon \C T\r V({}^{b}\C T)$ \cite{dftc}, but he did not find
a connection between $V({}^{b}\C T)$ and  Neeman's $K$-theories of triangulated categories \cite{kttcneeman}. Notwithstanding, he proved that if $\C{T}$ has a
bounded non-degenerate $t$-structure with heart $\C{A}$, e.g.~$\C T=D^{b}(\C A)$, 
determinant functors on $\C{T}$ essentially coincide  with those on the abelian
category $\C A$ in the sense of Deligne \cite[Theorem 5.2]{dftc}. In general, for an exact category $\C E$, not all determinant functors $\det\colon \C E\r\C P$ extend to a Breuning determinant functor $\det\colon D^{b}(\C E)\r\C P$.

In connection with this problem, Grothendieck in a 1973 letter to Kundsen \cite[Appendix B]{dfec} had suggested considering $D^{b}(\C E)$ as a triangulated category enhanced with a category of `true triangles', to develop a theory of determinant functors for such enhanced  triangulated categories, and to show that any determinant functor $\det\colon \C E\r\C P$ extends to $\det\colon D^{b}(\C E)\r\C P$ in an essentially unique way for $\C E$ additive or abelian. Of course the problem  makes sense more generally for $\C E$ exact. 

We regard the bounded derived category $D^{b}(S_{2}\C E)$ of short exact sequences in $\C E$ as the category of true triangles of $D^{b}(\C E)$. More generally, we work with the homotopy category $\ho(\C W)$ of a Waldhausen category $\C W$ and the homotopy category $\ho(S_{2}\C W)$ of cofiber sequences in $\C W$. We define  derived determinant functors on $\C W$ by using only $\ho(\C W)$ and $\ho(S_{2}\C W)$ and we construct a universal derived determinant functor with target $V^{\operatorname{der}}(\C W)$. The group of isomorphism classes of objects in $V^{\operatorname{der}}(\C W)$ and the automorphism group of the tensor unit are Garkusha's derived $K$-theory groups $DK_{0}(\C W)$ and $DK_{1}(\C W)$ \cite{sdckt2}, respectively. We then deduce from \cite{malt} that derived and non-derived determinant functors on a Walhausen category $\C W$ are essentially the same thing, provided $\C W$  has cylinders and a saturated class of weak equivalences. This answers Grothendieck's question in the positive.

Returning to ordinary triangulated categories $\C T$, we define new notions of determinant functors whose universal examples compute Neeman's $K$-theories $K_{*}({}^{d}\C T)$ and $K_{*}({}^{v}\C T)$ in degrees $0$ and $1$ \cite{kttcneeman}. These are functorial $K$-theories, therefore they cannot simultaneously satisfy some desirable properties such as additivity, localization and agreement with Quillen's $K_{1}$ of exact categories \cite{kttc}. Little is known about these $K$-theories. Neeman asked, for $\C T$ a triangulated category with a bounded non-degenerate $t$-structure with heart $\C A$, whether $K_{1}(\C A)=K_{1}({}^{d}\C T)=K_{1}({}^{v}\C T)$ \cite[Problem 56]{kttcneeman}. He did this ``in order to show how embarrassingly little we know''  (sic) about the $K$-theory of triangulated categories. We here answer affirmatively this question. We also show with an example that $K_{1}({}^{d}\C T)\neq K_{1}({}^{v}\C T)$ in general. In addition, we prove that  $K_{*}({}^{d}\C T)$ and $K_{*}({}^{v}\C T)$ 
satisfy additivity in degrees $0$ and $1$. 

We should mention that Neeman has yet another $K$-theory $K_*(^{w}\C{T})$ which is not functorial, and moreover it is only defined for triangulated categories $\C T$ with a special kind of algebraic model. This condition is not satisfied by most triangulated categories arising in topology, e.g. the stable homotopy category. Neeman managed to show in a series of  papers that if $\C{T}$ has the required models and also a $t$-structure as above then $K_*(\C{A})=K_*(^{w}\C{T})$ in all dimensions, see \cite{kttcneeman} and the references therein. Our similarly flavoured theorems in dimension $1$ are the first results of this kind for functorial $K$-theories of triangulated categories beyond dimension $0$. Our techniques are completely different to Neeman's.

Another way of enhancing a triangulated category is by considering higher triangles, as suggested by Be{\u\i}linson--Bernstein--Deligne \cite{fp}. A 2-triangle would be an octahedron, 
there should be a class of \exact\ octahedra satisfying some axioms, generalizing the axioms for \exact\ triangles in triangulated categories, and so on.  Maltsiniotis \cite{cts} worked out an explicit definition and gave these categories the names of strongly triangulated categories or $\infty$-triangulated categories $\C T_{\infty}$. He defined a $K$-theory for them, that we denote $K_{*}(^{s}\C T_{\infty})$, and made some conjectures. We consider determinant functors for strongly triangulated categories and show the existence of a universal determinant functor  $\det\colon \C T_{\infty}\r V({}^{s}\C T_{\infty})$ such that $V({}^{s}\C T_{\infty})$ computes $K_{0}(^{s}\C T_{\infty})$ and $K_{1}(^{s}\C T_{\infty})$, as in previous cases. We use this to give an example of an exact category $\C E$ for which $K_{1}(\C E)\neq K_{1}(^{s}D^{b}(\C E))$. This disproves two conjectures due to Maltsiniotis \cite{cts,ktdt}. 

Another application included in this paper is to provide generators and a (possibly incomplete) set of relations for  $K_{1}({}^{d}\C T)$,  $K_{1}({}^{s}\C T_{\infty})$ and the automorphism group of the tensor unit in $V({}^{b}\C T)$. This extends results of \cite{K1gr,k1tc,k1wc}.

Our methods are fairly general, and they are suitable for application in other contexts not included in this paper. We consider determinant functors on certain kind of simplicial categories $\C C_{\bullet}$ with extra structure, that we call \emph{$S_{\bullet}$-categories}. Here $S_{\bullet}$ stands for Waldhausen's construction $S_{\bullet}\C W$  \cite{akttsI}, that he used to define $K_{*}(\C W)$. 

Most $K$-theories in the literature can be defined from a certain $S_{\bullet}$-category. We  construct a  universal determinant functor $\det\colon \C C_{\bullet}\r V(\C C_{\bullet})$ and show that $V(\C C_{\bullet})$ computes $\pi_{0}$ and $\pi_{1}$ of a connective spectrum $K(\C C_{\bullet})$ defined from $\C C_{\bullet}$ by using Segal's delooping machine \cite{cct}, in the same way as Waldhausen defined a spectrum $K(\C W)$ out of  $S_{\bullet}\C W$ whose homotopy groups are the $K$-theory groups $K_{*}(\C W)$.

We can apply the general theory to Garkusha's $S_{\bullet}\der$ of a right pointed derivator $\der$ \cite{sdckt1}. This yields a definition of determinant functor for derivators $\det\colon \der\r\C P$ that we have not worked out explicitly. Nevertheless, our results show the existence of a universal determinant functor  $\det\colon \der\r V(\der)$ such that $V(\der)$ is a model of the $1$-type of Garkusha--Maltsiniotis's $K$-theory spectrum $K(\der)$ \cite{sdckt1,ktdt}.

Figure \ref{figurin} illustrates different types of categories, interpolating between exact and triangulated categories, to which the theory of this paper applies.

\begin{figure}
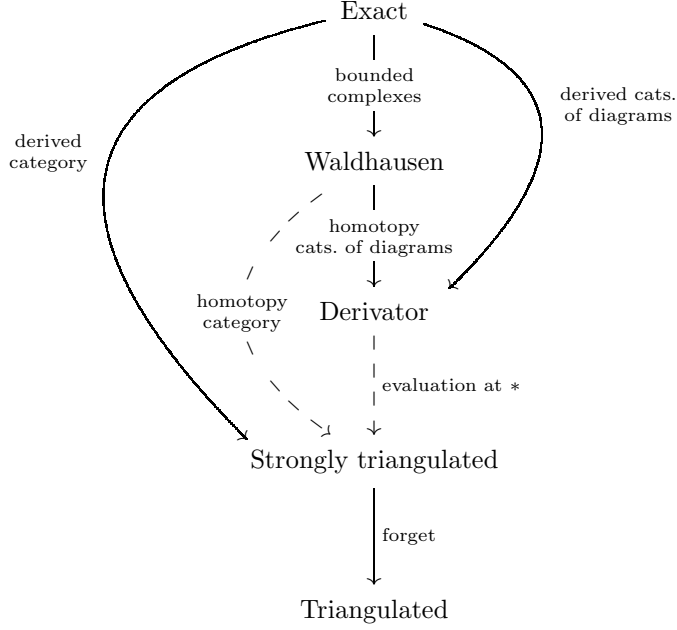

$$\xy
(0,0)*++{\text{Exact}}="e",
(0,-20)*++{\text{Waldhausen}}="w",
(0,-40)*++{\text{Derivator}}="d",
(0,-60)*++{\text{Strongly triangulated}}="s",
(0,-80)*++{\text{Triangulated}}="t",
\ar "e";"w"|{\begin{array}{c}\scriptstyle\text{bounded}\vspace{-4pt}\\\scriptstyle\text{complexes}\end{array}}
\ar
"w";"d"|{\begin{array}{c}\scriptstyle\text{homotopy}\vspace{-4pt}\\\scriptstyle\text{cats.\
        of diagrams}\end{array}} 
\ar @{-->} "d";"s"^-{\text{evaluation at }{*}}
\ar "s";"t"^-{\text{forget}}
\ar @/^50pt/ "e";(10,-37)^{\begin{array}{c}\scriptstyle\text{derived cats.}\vspace{-4pt}\\\scriptstyle\text{of diagrams}\end{array}}
\ar @{-->} @/_50pt/ "w";"s"|{\begin{array}{c}\scriptstyle\text{homotopy}\vspace{-4pt}\\\scriptstyle\text{category}\end{array}} 
\ar @/_80pt/ "e";(-17,-57)_{\begin{array}{c}\scriptstyle\text{derived}\vspace{-4pt}\\\scriptstyle\text{category}\end{array}}
\endxy$$
\caption{The hierarchy between exact and triangulated categories. The
  dashed arrows indicate that well-known stability properties are
  required.}\label{figurin}
\end{figure}


\medskip

Picard groupoids are algebraic models for spectra with homotopy groups concentrated in dimensions $0$ and $1$, and we prove that $V(\C C_{\bullet})$ is a specific algebraic model of the $1$-type of  $K(\C C_{\bullet})$. This strengthens our previous comments on how we obtain low-dimensional $K$-theory groups out of categories of virtual objects.
Our explicit  models for categories of virtual objects are as strict and small as they can be. They arise from \emph{stable quadratic modules}, an uncomplicated algebraic structure defined by Baues \cite{ch4c} to model stable homotopy types with homotopy groups concentrated in two consecutive degrees. Stable quadratic modules form a $2$-category that we prove to be $2$-equivalent, in a weak sense, to the $2$-category of Picard groupoids. This is a crucial step in the proof of our main results.

Note that determinant functors on  Waldhausen categories  have already been successfully applied in non-commutative Iwasawa theory  \cite{malte, ncimcvff}, and in $\mathbb{A}^{1}$-homotopy theory \cite{adrrii}. They have  also been discussed in the University of Chicago's Geometric Langlands Seminar  \cite{pcs}, see Remark \ref{chicago}. Fukaya and Kato give in \cite{fukayakato} an alternative construction of the category of virtual objects for $\C{E}$ the exact category of projective modules of finite type over a ring $R$.

\section{Determinant functors}\label{sec1}

In this section we state our main results: the definition of determinant functors for many kinds of categories, together with the construction of universal determinant functors whose codomains calculate the $1$-type of the respective $K$-theory spectra (see Theorems \ref{mainconcreto} and \ref{main2}).  Furthermore, the $1$-types of several known comparison maps between these spectra are calculated by straightforward algebraic functors between these codomains. The proofs will be given in Section \ref{universaldet}, where we develop a unified approach to these different notions.

\subsection{\Picardcategories\ and categorical groups}

Recall that a \emph{\Picardcategory} $\C P$ is a symmetric monoidal category \cite[VII.1, 7]{cwm} such that all morphisms are invertible and tensoring with any object $x$ in $\C{P}$ yields an equivalence of categories
$$x\otimes\sp\,\colon\C{P}\st{\sim}\To \C{P}.$$
Some examples are:

\begin{itemize}
\item The groupoid $\mathbf{Pic}(X)$ of line bundles over a scheme or
  manifold $X$ with the tensor product over the structure sheaf
  $\otimes_{\C{O}_X}$. If  $X=\operatorname{Spec} R$ for a commutative ring $R$ then $\mathbf{Pic}(R)=\mathbf{Pic}(X)$ 
is the groupoid of invertible $R$-modules.

\item The category $\mathbf{Pic}^\ZZ(X)$ of graded line bundles over a
  scheme or manifold $X$. Objects are pairs $(n,L)$ with $L\r X$ a line
  bundle  and $n\colon X\r \ZZ$ a locally constant map, called \emph{degree}. Morphisms are only allowed between objects with the same degree. They are simply line bundle isomorphisms. The symmetric monoidal
  structure is $(n,L)\otimes (n',L')= (n+n',L\otimes_{\C{O}_X}L')$ with
  the usual associativity and unit constraints. The graded symmetry
  constraint is the usual one twisted by a sign depending on the degrees, 
  $$(n,L)\otimes (n',L')\To(n',L')\otimes (n,L)\colon a\otimes
  b\mapsto(-1)^{n\cdot n'}b\otimes a.$$ 
\end{itemize}

\Picardcategories\ are also called \emph{symmetric categorical groups}. A \emph{categorical group} is a monoidal groupoid such that $x\otimes\sp$ is an equivalence of categories for any object $x$. 

Associativity and commutativity (symmetry) isomorphisms in these  monoidal groupoids will simply be denoted by
$$\ass\colon x\otimes(y\otimes z)\To  (x\otimes y)\otimes z,\qquad
\com\colon x\otimes y\To  y\otimes x.$$
Recall also that a tensor functor $F$ is equipped with natural multiplication isomorphisms
$$\hpp\colon F(x)\otimes F(y)\To  F(x\otimes y).$$
See Section \ref{victim} for further details.


\renewcommand{\X}{X}
\renewcommand{\Y}{Y}
\renewcommand{\Z}{Z}

\subsection{For Waldhausen categories}\label{forwald}

A \emph{Waldhausen category} $\C{W}$ is a category together with a distinguished zero object $0$ and two subcategories $\cof(\C W)$ and $\we(\C W)$ containing $\iso{\C{W}}$, whose morphisms are called \emph{cofibrations} $\into$ and \emph{weak equivalences} $\st{\sim}\r$, respectively. Some axioms must be satisfied, in particular the pushout of any map and a cofibration $\Y\l \X\into \Z$ 
exists in $\C{W}$, and is denoted $\Y\cup_\X\Z$. These categories were introduced by Waldhausen \cite[Section 1.2]{akts}, under the name of \emph{categories with cofibrations and weak equivalences}, as a general setting where a reasonable $K$-theory can be defined extending Quillen's  \cite{hakt}.

\begin{exm}
The following are three simple examples of Waldhausen categories:
\begin{itemize}
\item 
An \emph{exact category} $\C{E}$ is a full additive subcategory of an
abelian category closed under extensions. A short exact sequence in
$\C{E}$ is a short exact sequence in the ambient abelian category
between objects in $\C{E}$. The first arrow of a short exact sequence
in $\C E$ is called an \emph{admissible monomorphism}. Admissible
monomorphisms are the cofibrations of a Waldhausen category structure
on $\C{E}$ where weak equivalences are isomorphisms $\we(\C E)=\iso{\C{E}}$. One must also choose a zero
object $0$ in $\C E$. Examples of exact categories are abelian
categories, the category $\mathbf{proj}(R)$ of finitely generated
projective modules over a ring~$R$, and the category $\mathbf{vect}(X)$ of vector bundles of finite rank over a scheme or a manifold $X$.

\item The category $C^b(\C{E})$ of bounded complexes in an exact category $\C E$. Cofibrations are levelwise split monomorphisms and weak equivalences are quasi-isomorphisms, i.e.\ chain morphisms inducing isomorphisms in homology computed in the ambient abelian category. The distinguished zero object is the complex with $0$ everywhere.

\item The category $C^b(\C{E})$ with the same weak equivalences and distinguished zero object as above, but levelwise admissible monomorphisms as cofibrations. This Waldhausen category has the same $K$-theory as the previous one. We will always assume that $C^b(\C{E})$  is endowed with this Waldhausen category structure so that the inclusion of complexes concentrated in degree~$0$, $\C{E}\subset C^b(\C{E})$, preserves all the structure.
\end{itemize}
\end{exm}

Coproducts $\X\sqcup \Y=\X\cup_0\Y$ exist in $\C{W}$. Also, 
for any cofibration $f\colon \X\into \Y$ we have a \emph{cofiber sequence}
\begin{equation}\label{cofibersequence}
\Delta\colon\;\X\st{f}\into \Y\onto C^f. 
\end{equation}
Here $C^f=0\cup_\X\Y$ is the cofiber, sometimes denoted by $Y/X$, and the second morphism is the canonical map to  the push-out. Cofiber sequences in exact categories are short exact sequences.

The following definition of determinant functor generalizes Deligne's definition for the special case of exact categories {\cite[4.2]{dc}}.  

\begin{defn}\label{detwald}
Let $\C W$ be a Waldhausen category and $\C P$ a \Picardgroupoid.
A \emph{determinant functor} $\det\colon \C W\r \C P$  consists of a functor from the subcategory of weak equivalences,
$$\det\colon\we(\C{W})\To \C{P},$$ together with
\emph{additivity data}: for any cofiber sequence $\Delta$ as above, 
a morphism
$$\det(\Delta)\colon \det(C^{f})\otimes\det(\X) \To \det(\Y),$$
natural with respect to weak equivalences of cofiber sequences, given by commutative diagrams in $\C W$,
\begin{equation}\label{wecofibersequence}
\begin{array}{c}
\xymatrix{\Delta\ar[d]^\sim_\Phi&
\X\ar@{>->}[r]^-f\ar[d]^\sim&\Y\ar@{->>}[r]\ar[d]^\sim&C^f\ar[d]^\sim\\
\Delta'&
\X'\ar@{>->}[r]^-{f'}&\Y'\ar@{->>}[r]&C^{f'}}
\end{array}
\end{equation}
The following two axioms must be satisfied.

\begin{enumerate}

\item \emph{Associativity:} given a \emph{staircase} commutative diagram
\begin{eqnarray}\label{stair}
\Theta\colon\;\begin{array}{c}
\xymatrix{&&C^g\\
&C^f\ar@{>->}[r]&C^{gf}
\ar@{->>}[u]\\
\X\ar@{>->}[r]^-f&\Y\ar@{>->}[r]^-g\ar@{->>}[u]&\Z\ar@{->>}[u]}
              \end{array}
\end{eqnarray}
containing four cofiber sequences in $\C W$,
\begin{align*}
\Delta_{f}\colon\quad&\X\st{f}\into \Y\onto C^{f},&\Delta_{g}\colon\quad&\Y\st{g}\into \Z\onto C^{g},\\
\Delta_{gf}\colon\quad&\X\st{gf}\into \Z\onto C^{gf},& \widetilde{\Delta}\colon\quad&C^{f}\into C^{gf}\onto C^{g},
\end{align*}
the following diagram in $\C P$ commutes:
$$\xymatrix@C=5pt{&\det(\Z)&\\
\det(C^{g})\otimes\det(\Y)\ar[ru]^{\det(\Delta_g)}&&\det(C^{gf})\otimes\det(\X)\ar[lu]_{\det(\Delta_{gf})}\\
\det(C^{g})\otimes(\det(C^{f})\otimes\det(\X))\ar[u]^{1\otimes\det(\Delta_f)}\ar[rr]_{\ass}&&(\det(C^{g})\otimes\det(C^{f}))\otimes\det(\X)\ar[u]_{\det(\widetilde{\Delta})\otimes1}.}$$

\item \emph{Commutativity:} given 
two objects $\X$ and $\Y$ in $\C W$,  there are
  two cofiber sequences associated to the inclusions
  and projections of the two factors of 
their coproduct, 
\begin{equation}\label{inprowald}
\Delta_1\colon\quad\X\into \X\sqcup \Y\onto \Y,\qquad \Delta_2\colon\quad\Y\into \X\sqcup \Y\onto \X,
\end{equation}
and the following triangle commutes:
$$\xymatrix@C=0pt{&\det(\X\sqcup \Y)\ar@{<-}[rd]^-{\det(\Delta_2)}\ar@{<-}[ld]_-{\det(\Delta_1)}&\\\det(\Y)\otimes\det(\X)\ar[rr]_-{\com}&&\det(\X)\otimes\det(\Y).}$$
\end{enumerate}

If $\C P$ is just a categorical group, we define \emph{non-commutative determinant functors} $\det\colon \C W\r \C P$ as above, but omitting the commutativity axiom.
\end{defn}

The notation $\det\colon \C W\r \C P$ may seem to be misleading at a first glance. It suggests that $\det$ is defined on all morphisms of $\C W$. As a functor, it is only defined on weak equivalences. Nevertheless, it also takes values on cofiber sequences in the form of additivity data. This justifies the usual notation.

\begin{exm}\label{exexact}
The prototypical example of determinant functor on an exact category is the following. 
 Suppose $X$ is a scheme or manifold.
Then the rank of a vector bundle $E$ over $X$ is a locally constant function 
$\rank E\colon X\r \ZZ$, and we can define a determinant functor $\det\colon\mathbf{vect}(X)\r\mathbf{Pic}^\ZZ(X)$ as follows:
\begin{align*}\det(E)&=(\rank E,\wedge^{\rank
    E}_{\C{O}_X}E).
    \end{align*}
As a particular case, 
 we get a determinant functor  $\det\colon\mathbf{proj}(R)\r\mathbf{Pic}^\ZZ(R)$.
 
Knudsen--Mumford  \cite{pmssc} showed that this example can be extended to bounded complexes  $\det\colon C^b(\mathbf{vect}(X))\r\mathbf{Pic}^\ZZ(X)$ in an essentially unique way. Knudsen \cite{dfec} generalized this result to arbitrary determinant functors on an exact category. These results are proved by a lengthy direct computation. 
We here derive them from the existence of universal determinant functors with values in a \Picardcategory\ computing the first two $K$-theory groups and from the Gillet--Waldhausen theorem.
\end{exm}

\begin{rem}\label{chicago}
In the seminar notes \cite{pcs} a tentative definition of determinant functor for Waldhausen categories is given. Drinfeld wonders whether this notion is such that a universal determinant functor exists and whether the target is associated to Waldhausen's $K$-theory \cite[Endnote 7)]{pcs}. The results on non-commutative determinant functors in \cite[Section 2.3]{malte} and in Section \ref{ncdf} of this paper show that the answer is yes provided we introduce a slight correction in \cite[(ii) in Section 2]{pcs}: 
we must require the induced map 
$A\cup_{A'} B'\r B$ 
to be a cofibration, compare \cite[Proposition 1.6]{k1wc}.
The same correction must be made in \cite[Definition 2.2.1 (c)]{adrrii}.
\end{rem}

\begin{defn}\label{uniwald}
A determinant functor  $\det\colon\C{W}\r V(\C{W})$ is \emph{universal} if any determinant  functor $\det'\colon\C{W}\r\C{P}$ factors through $\det$ in an essentially unique way. More precisely, there exists a diagram
\begin{equation}\label{factorwald}
\xymatrix{\C{W}\ar[r]^-{\det}\ar[rd]_-{\det'}^{}="b"&V(\C{W})\ar[d]^-{f}_<(0){\quad}="a"\\&\C{P}\ar@{=>}"a";"b"_\alpha}
\end{equation}
where $f$ is a symmetric tensor functor and $\alpha$ is a natural transformation compatible with the additivity data, i.e.~for any cofiber sequence $\Delta$ as above, the following square commutes:
$$\xymatrix@C=45pt{
f(\det(C^{f}))\otimes f(\det(\X))\ar[r]^-{\hpp}
\ar[d]_{\alpha(C^{f})\otimes \alpha(\X)}
&f(\det(C^{f})\otimes\det(\X)) \ar[r]^-{f(\det(\Delta))}&f(\det(\Y))\ar[d]^{\alpha(\Y)}\\
\det'(C^{f})\otimes\det'(\X) \ar[rr]^-{\det'(\Delta)}&&\det'(\Y)
}$$
Moreover, if
$$\xymatrix{\C{W}\ar[r]^-{\det}\ar[rd]_-{\det'}^{}="b"&V(\C{W})\ar[d]^-{f'}_<(0){\quad}="a"\\&\C{P}\ar@{=>}"a";"b"_{\alpha'}}$$
is another such factorization, then there exists a unique tensor natural transformation $\beta\colon f\rr f'$ such that \eqref{factorwald} coincides with the pasting of
$$\xymatrix{\C{W}\ar[r]^-{\det}\ar[rd]_-{\det'}^{}="b"&V(\C{W})\ar[d]|-{f'}_<(0){\quad}="a"^{\,}="c"
\ar@/^25pt/[d]^f_{\,}="d"\\&\C{P}\ar@{=>}"a";"b"_{\alpha'}\ar@{<=}"c";"d"_{\beta}}$$

We call $V(\C W)$ the \emph{category of virtual objects} of $\C W$, following Deligne's terminology for  exact categories. This \Picardgroupoid\ is well defined up to equivalence. We later show its existence, producing a very explicit model.

\emph{Universal non-commutative determinant functors} are defined in the obvious way, dropping the symmetry condition from the tensor functors $f$ and $f'$.
\end{defn}


Any Waldhausen category $\C W$ has an associated homotopy category
$\ho (\C W)$ obtained by formally inverting weak equivalences in $\C
W$. 
We can also consider the Waldhausen category $S_2\C W$ of cofiber sequences in~$\C W$ \cite{akts}. 

\begin{defn}\label{derdet}
Let $\C P$ be a \Picardcategory. 
A \emph{derived determinant functor} $\det\colon\C W\r\C P$  consists of a functor from the category of isomorphisms in the homotopy category,
$$\det\colon\iso{\ho(\C{W})}\To \C{P},$$ together with
\emph{additivity data}: for any cofiber sequence $\Delta\colon\;\X\into \Y\onto C^{f}$ in~$\C{W}$, a morphism in $\C P$
$$\det(\Delta)\colon \det(C^{f})\otimes\det(\X)\To \det(\Y),$$
natural in $\ho(S_2\C W)$.
Axioms (1) and (2) in Definition \ref{detwald} must be satisfied.

\emph{Non-commutative derived determinant functors} with target a categorical group $\C P$ are defined by removing the commutativity axiom (2). Moreover, \emph{universal (non-commutative) derived determinant functors} are defined as in Definition \ref{uniwald}.
\end{defn}

Derived determinant functors are related to Grothendieck's  question to Knudsen that we answer positively in Section \ref{dnd}.


\renewcommand{\X}{X}
\renewcommand{\Y}{Y}
\renewcommand{\Z}{Z}

\subsection{For triangulated categories}

A \emph{triangulated category} $\C T$ is an additive category together with an equivalence $\Sigma\colon\C{T}\st{\sim}\r\C{T}$ and a class of diagrams called \emph{\exact\ triangles}
$$\Delta\colon\; X\st{f}\To  Y\st{i^f}\To  \cone{f}\st{q^f}\To  \Sigma X,$$
also depicted as
\begin{equation}\label{triangle}
\Delta\colon\;\begin{array}{c}
\xymatrix@!=15pt{X\ar[rr]^-f&&Y\ar[ld]^-{i^f}\\&\cone{f}\ar[lu]|{\minusone}^-<(.15){q^f}&
}\end{array}
\end{equation}
where $\X\st{\minusone}\r \Y$ denotes a morphism $\X\r\Sigma \Y$. 
Any diagram like \eqref{triangle} where two consecutive morphisms compose to $0$ will be called a \emph{triangle}. We say that $f$ is the \emph{base} of the triangle. 

Distinguished triangles must satisfy a set of well-known axioms, see \cite{triang}. Verdier's octahedral axiom says that given composable morphisms 
$$X\st{f}\To  Y\st{g}\To  Z,$$ 
and three \exact\ triangles $\Delta_f$, $\Delta_g$ and $\Delta_{gf}$ with bases $f$, $g$ and $gf$, respectively, then there exists a diagram with the shape of an octahedron 
\begin{equation}\label{octa}
\Theta\colon\qquad\octahedron{objA=X,objB=Y,objC=Z,objD=\cone{f},objE=\cone{gf},objF=\cone{g},
            arAB=f,arBC=g,arAC=gf,arDE=\bar g,arEF=\bar f, arCE=i^{gf}, arEA=q^{gf}, arFB=q^g, arCF=i^g, arBD=i^f, arDA=q^f, arFD=(\Sigma i^f)q^g}
\end{equation}
in which three faces are $\Delta_f$, $\Delta_g$ and $\Delta_{gf}$,
four faces are commutative triangles, and the remaining face
\begin{equation*}
\widetilde{\Delta}\colon\qquad\begin{array}{c}
\xymatrix@!=5pt{\cone{f}\ar[rr]^-{\bar g}
&&\cone{gf}\ar[ld]^-{\bar f}
\\&\cone{g}\ar[lu]|{\minusone}}
\end{array}
\end{equation*}
is also a \exact\ triangle. Moreover, three  planes divide the octahedron into two square pyramids. The squares 
perpendicular to the page
must be commutative.

Verdier's axiom is about the existence of appropriate $\bar{f}$ and $\bar{g}$; the rest is given.
Any diagram with the properties of \eqref{octa} will be called an \emph{octahedron}.



A \emph{special octahedron} is an octahedron \eqref{octa} such that the two commutative squares are  homotopy cartesian in the sense of \cite[Definition 1.4.1]{triang}, i.e.~the following triangles are \exact:
\begin{equation}\label{push}
\begin{array}{c}
Y\st{\binom{g}{-i^f}}\To  Z\oplus \cone{f}\st{(i^{gf},\bar{g})}\To  \cone{gf}\st{q^g\bar{f}}\To \Sigma Y,\\
\cone{gf}\st{\binom{q^{gf}}{-\bar{f}}}\To  \Sigma X\oplus \cone{g}\st{(\Sigma f,q^{g})}\To  \Sigma Y\st{\Sigma (\bar{g}q^f)}\To \Sigma \cone{gf}.
\end{array}
\end{equation}

\begin{rem}\label{speziale}
Special octahedra where first introduced by Be{\u\i}linson--Bernstein--Deligne \cite[Remarque 1.1.13]{fp}. If $\C{T}$ is a derived category, or more generally a stable homotopy category, then it is well known that the standard octahedral completion of two composable morphisms $X\r Y\r Z$ is special in this sense. In general, the octahedral axiom completion can be chosen so that one of the two triangles in \eqref{push} is \exact, compare \cite[Proposition 1.4.6]{triang}. In particular, if the completion happens to be unique then the resulting octahedron is special. This observation will be useful in applications concerning $t$-structures.
\end{rem}

\begin{defn}\label{dettriang}
Let $\C P$ be a \Picardcategory.
A \emph{Breuning determinant functor} $\det\colon\C T\r \C P$ consists of a functor $$\det\colon\iso{\C{T}}\To \C{P},$$ together with
\emph{additivity data}: for any \exact\ triangle $\Delta$ as in \eqref{triangle}, a morphism in $\C P$
$$\det(\Delta)\colon \det(\cone{f})\otimes\det(X)\To \det(Y),$$
natural with respect to \exact\ triangle isomorphisms
\begin{equation}\label{isotriangle}
\begin{array}{c}
\xymatrix{\Delta\ar[d]^\cong_\Phi&
\X\ar[r]^-f\ar[d]^\cong_{h}&\Y\ar[r]^-{i^f}\ar[d]^\cong&C^f\ar[d]^\cong \ar[r]^-{q^f}&\Sigma X\ar[d]^\cong_{\Sigma h}\\
\Delta'&
\X'\ar[r]^-{f'}&\Y'\ar[r]^-{i^{f'}}&C^{f'}\ar[r]^-{q^{f'}}&\Sigma X'}
\end{array}
\end{equation}
The following two axioms must be satisfied, see \cite[Definition 3.1]{dftc}.

\begin{enumerate}

\item \emph{Associativity}: for any octahedron $\Theta$ as in \eqref{octa}  the following diagram in $\C P$ commutes:
$$\xymatrix@C=5pt{&\det(Z)&\\
\det(\cone{g})\otimes\det(Y)\ar[ru]^{\det(\Delta_g)}&&\det(\cone{gf})\otimes\det(X)\ar[lu]_{\det(\Delta_{gf})}\\
\det(\cone{g})\otimes(\det(\cone{f})\otimes\det(X))\ar[u]^{1\otimes\det(\Delta_f)}\ar[rr]_{\ass}&&(\det(\cone{g})\otimes\det(\cone{f}))\otimes\det(X)\ar[u]_{\det(\widetilde{\Delta})\otimes1}}$$

\item \emph{Commutativity}: given 
two objects $X$, $Y$ in $\C T$, if we consider the two \exact\ triangles  associated to the inclusions and projections of the two factors of their coproduct
\begin{equation}\label{inprotriang}
\Delta_1\colon\quad X\r X\oplus Y\r Y\st{0}\r\Sigma X,\qquad \Delta_2\colon\quad Y\r X\oplus Y\r X\st{0}\r\Sigma Y,
\end{equation}
then the following diagram commutes:
$$\xymatrix@C=0pt{&\det(X\oplus Y)\ar@{<-}[rd]^-{\det(\Delta_2)}\ar@{<-}[ld]_-{\det(\Delta_1)}&\\\det(Y)\otimes\det(X)\ar[rr]_-{\text{comm.}}&&\det(X)\otimes\det(Y)}$$
\end{enumerate}

A \emph{special determinant functor} is defined in the same way, but we only require associativity with respect to special octahedra.

We can define \emph{non-commutative Breuning or special determinant functors}  allowing $\C P$ to be any categorical group and dropping the commutativity axiom. We can also define \emph{universal (non-commutative) Breuning or special determinant functors} as in Definition \ref{uniwald}. The only difference is that $\Delta$ must be a \exact\ triangle instead of a cofiber sequence.

\end{defn}




We now recall Vaknin's notion of virtual triangle \cite{vt}. 
A \emph{contractible triangle} is a direct sum of triangles of the form
\begin{align*}
A\st{1}\r A\r 0\r\Sigma A,&& 0\r B\st{1}\r B\r 0,&& C\r 0\r \Sigma C\st{1}\r\Sigma C,
\end{align*}
i.e.
$$\xymatrix{A\oplus C\ar[r]^{\binom{0\;0}{1\;0}}& B\oplus A\ar[r]^{\binom{0\;0}{1\;0}}&\Sigma C\oplus B \ar[r]^{\binom{0\;0}{1\;0}}&\Sigma A\oplus\Sigma C.}$$
Contractible triangles are always \exact. 

The definition of virtual triangle is a little bit involved. As a special case we have the triangles 
$$X'\st{f'}\To  Y'\st{i'} \To  \cone{f'}\st{q'} \To \Sigma X'$$
such that there exist \exact\ triangles as follows:
\begin{align*}
X'\st{f''}\r Y'\st{i'}\r \cone{f'}\st{q'}\r\Sigma X',&& X'\st{f'}\r Y'\st{i''}\r \cone{f'}\st{q'}\r\Sigma X',&& X'\st{f'}\r Y'\st{i'}\r \cone{f'}\st{q''}\r\Sigma X',
\end{align*}
i.e.~we can replace each arrow in $X'\st{f'}\to Y'\st{i'} \to \cone{f'}\st{q'} \to\Sigma X'$ so as to obtain a \exact\ triangle. 

In general, a triangle $X\st{f}\to Y\st{i} \to \cone{f}\st{q} \to\Sigma X$ is a \emph{virtual triangle} 
if the direct sum with some contractible triangle gives the special case of virtual triangle  above, 
$$\xymatrix@C=35pt{X'\ar[r]^-{f'}\ar[d]^{\cong}_{h}&Y'\ar[r]^-{i'}\ar[d]^{\cong}&\cone{f'}\ar[r]^-{q'}\ar[d]^{\cong}&\Sigma X'\ar[d]^{\cong}_{\Sigma h}\\
X\oplus A\oplus C\ar[r]^-{f\oplus\binom{0\;0}{1\;0}}& Y\oplus B\oplus A\ar[r]^-{i\oplus\binom{0\;0}{1\;0}}&\cone{f}\oplus \Sigma C\oplus B\ar[r]^-{q\oplus\binom{0\;0}{1\;0}}&\Sigma X\oplus \Sigma A\oplus\Sigma C.}$$

A \emph{virtual octahedron} is a diagram $\Theta$ as in \eqref{octa} where four faces $\Delta_f$, $\Delta_g$, $\Delta_{gf}$, $\widetilde{\Delta}$, are virtual triangles, the remaining four 
faces are commutative triangles, and we have two commutative squares as in classical octahedra.

\begin{rem}
In a virtual octahedron, the triangles  \eqref{push} are always virtual by Vaknin's two-out-of three property \cite[Section 1.3]{vt} applied to
$$\xymatrix{Z\ar[r]^{i^{gf}}&\cone{gf}\\
Y\ar[r]^{i^{f}}\ar[u]^{g}&\cone{f}\ar[u]_{\bar{g}}\\
X\ar[r]\ar[u]^{f}&0\ar[u]}
\qquad\qquad\qquad\qquad
\xymatrix{\cone{g}\ar[r]^{q^{g}}&\Sigma Y\\
\cone{gf}\ar[r]^{q^{gf}}\ar[u]^{\bar f}&\Sigma X\ar[u]_{\Sigma f}\\
Z\ar[r]^{}\ar[u]^{i^{gf}}&0\ar[u]}$$
\end{rem}

\begin{defn}\label{virdettriang}
A \emph{virtual determinant functor} $\det\colon\C T\r \C P$ is a functor $$\det\colon\iso{\C{T}}\To \C{P}$$ together with
\emph{additivity data}: for any virtual triangle $\Delta$ as in \eqref{triangle}, a morphism 
$$\det(\Delta)\colon \det(\cone{f})\otimes\det(X)\To \det(Y),$$
natural with respect to virtual triangle isomorphisms.
In addition we require associativity for virtual octahedra and commutativity as in Definition \ref{dettriang}.

We can also define \emph{(universal, non-commutative) virtual determinant functors}, compare Definition \ref{dettriang}.
\end{defn}


Following a remark of Be{\u\i}linson--Bernstein--Deligne \cite[1.1.14]{fp}, Maltsiniotis defined  the notion of \emph{strongly triangulated category} $\C T_{\infty}$, also termed \emph{$\infty$-triangulated category} \cite{cts}. He indicated how the bounded derived category $D^b(\C{E})$ can be endowed with such a structure. He also defined truncated versions, called \emph{$n$-pretriangulated category}. 
A \emph{$3$-pre\-triangulated category} $\C{T}_3$ is a triangulated category  together with a family of \emph{\exact\ octahedra} (\emph{$3$-triangles} in Maltsiniotis's terminology), which must satisfy some axioms
generalizing the axioms for \exact\ triangles in a triangulated
category, see \cite[1.3 and 1.4]{cts}.

\begin{defn}\label{3eme}
Let 
$\C P$ be a \Picardcategory. 
A \emph{determinant functor} $\det\colon\C{T}_3\r\C{P}$ is the same as a determinant functor on the
underlying triangulated category, except that we only 
require the associativity axiom (1) to hold for  \exact\ octahedra. A determinant functor on a strongly triangulated category is a determinant functor on the underlying $3$-pre\-triangulated category. We similarly define \emph{(universal, non-commutative) determinant functors} in this context.
\end{defn}


\subsection{Stable quadratic modules}
In this section we introduce algebraic tools which allow a very explicit construction   of universal determinant functors. The main tools are Baues's stable quadratic modules \cite{ch4c}.

\begin{defn}\label{baues}
A \emph{stable quadratic module} $C_*$ consists of 
group homomorphisms
$$C_0^\abb\otimes C_0^\abb\st{\grupo{\cdot,\cdot}}\To  C_1\st{\partial}\To  C_0,$$
satisfying the following equations, $c_{i},c_{i}'\in C_{i}$:
\begin{enumerate}
\item $\grupo{c_0,c_0'}+\grupo{c_0',c_0}=0$,
\item $\partial\grupo{c_0,c_0'}=[c_{0}',c_{0}]$,
\item $\grupo{\partial(c_1),\partial(c_1')}=[c_{1}',c_{1}]$.
\end{enumerate}
Here $[x,y]=-x-y+x+y$ denotes the \emph{commutator} of two elements $x, y$ in a group. We will denote group laws additively, although the groups may be non-abelian. 
We use the same notation for elements of $C_0$ as for
their images in $C_0^\abb$ since in context there is no ambiguity. 

A \emph{morphism} of stable quadratic modules $f\colon C_{*}\r D_{*}$ consists of group homomorphisms $f_{i}\colon C_{i}\r D_{i}$, $i=0,1$,  satisfying $\partial f_{1}=f_{0}\partial$ and $\grupo{f_{0},f_{0}}=f_{1}\grupo{\cdot,\cdot}$. 

A \emph{homotopy} $\alpha\colon f\rr g$ between two morphisms
$f,g\colon C_*\to D_*$ 
is a function $\alpha\colon C_0\r D_1$ such that
\begin{align*}
\alpha(c_0+c_0')&=\alpha(c_0)^{g_0(c_0')}+\alpha(c_0'),\\
\partial\alpha(c_0)&=-g_0(c_0)+f_0(c_0),\\
\alpha\partial(c_1)&=-g_1(c_1)+f_1(c_1).
\end{align*}
Here we follow the conventions in \cite{malte}, which are opposite to \cite{ch4c,1tk,k1wc}.

Stable quadratic modules, morphisms and homotopies form a $2$-category. Horizontal composition is given by composition of maps, and the vertical composition of two homotopies $$f\st{\alpha}\Longrightarrow g\st{\beta}\Longrightarrow h$$ is given by the map $\beta+\alpha$, compare \cite[Proposition 7.2]{2hg1}.
\end{defn}

Notice that, if we think of a stable quadratic module $C_{*}$ as a non-abelian chain complex concentrated in degrees $0$ and $1$,
$$\cdots\r 0\r C_{1}\st{\partial}\To C_{0}\r0\r\cdots,$$
enriched with the bracket operation,
the  homotopies above are analogs of classical chain homotopies.

\begin{rem}
The bracket $\grupo{\cdot,\cdot}$ behaves as a bilinear form, since its source is the tensor square of the abelianization of $C_{0}$. It follows that the groups $C_{0}$ and $C_{1}$ have nilpotency class~$2$. Groups of nilpotency class $2$ are very close to  abelian groups. Commutators need not vanish, but they are central, and the commutator bracket $[\cdot,\cdot]$ behaves as a bilinear form: it factors through the tensor square of the abelianization.  The bracket $\grupo{\cdot,\cdot}$  also maps to the center of $C_{1}$. Moreover, $\partial(C_{1})\subset C_{0}$  is normal.

The group $C_{0}$ acts on the right of $C_{1}$ by the formula
$$c_1^{c_0}=c_1+\grupo{c_0,\partial(c_1)}.$$
The exponent of $c_{1}^{c_{0}}$ is actually in the abelianization $C_0^\abb$. Moreover, we have
\begin{align*}
c_{1}+c_{1}'&=c_{1}'+c_{1}^{\partial(c_{1}')}=(c_{1}')^{-\partial(c_{1})}+c_{1}.
\end{align*}

Homotopies satisfy $\alpha(0)=\alpha(0+0)=\alpha(0)^{g_{0}(0)}+\alpha(0)$,
hence $$\alpha(0)=0.$$ Moreover, $0=\alpha(0)=\alpha(c_0-c_0)=\alpha(c_0)^{-g_0(c_0)}+\alpha(-c_0)$,
therefore $$\alpha(-c_0)=-\alpha(c_0)^{-g_0(c_0)}.$$
\end{rem}

\begin{exm}\label{sqmexamples}
We here give some easy examples of stable quadratic modules.
\begin{enumerate}
\item A stable quadratic module with trivial bracket $\langle\cdot,\cdot\rangle$ is the same as an abelian group homomorphism $f$:
$$B\otimes B\st{0}\To A\st{f}\To B.$$

\item If $G$ is a group of nilpotency class $2$ and $H\subset G$ is a subgroup containing commutators, $[G,G]\subset H$, then
$$
G^{\abb}\otimes G^{\abb}\st{\langle\cdot,\cdot\rangle}\To H\hookrightarrow G,\qquad
\grupo{x,y}=[y,x],
$$
is a stable quadratic module.

\item The following stable quadratic module consists of abelian groups but has a non-trivial bracket
$$\mathbb{Z}\otimes \mathbb{Z}\st{\grupo{\cdot,\cdot}}\To \mathbb{Z}/2\st{0}\To \mathbb{Z},
\qquad \grupo{1,1}=1.$$

\item More generally, if $R$ is a commutative ring and $R^{\times}$ is the (multiplicative) group of units we can consider the stable quadratic module
$$\mathbb{Z}\otimes \mathbb{Z}\st{\grupo{\cdot,\cdot}}\To R^{\times}\st{0}\To \mathbb{Z},
\qquad \grupo{1,1}=-1.$$
\end{enumerate}
\end{exm}

Stable quadratic modules are closely related to \Picardcategories.

\begin{defn}\label{gammatron}
The \Picardcategory\ $\Gamma C_*$ associated to a stable quadratic module $C_*$ is defined as follows.
The set of objects is 
$C_0$. The set of all morphisms is the semidirect product $C_0\ltimes C_1$, i.e.~the cartesian product with the following group structure:
\begin{align*}
(c_0,c_1)+(c'_0,c'_1)&=(c_0+c'_0,c_1^{c'_0}+c'_1).
\end{align*}
The source and target of $(c_0,c_1)\in C_0\ltimes C_1$ are
$$(c_0,c_1)\colon c_0+\partial (c_1)\To  c_0.$$
Composition of morphisms is defined as
$$(c_0,c_1)\circ(c_0+\partial (c_1),c_1')=(c_0,c_1+c_1').$$
The tensor product is simply the sum $+$ on both objects and morphisms, and the tensor unit is $I=0$. It is strictly associative and unital. Symmetry constraints are defined by the bracket
$$(c_0'+c_0,\grupo{c_0',c_0}):c_0+c_0'\To  c_0'+c_0.$$
Notice that identity morphisms are given by
$$1_{c_{0}}=(c_{0},0).$$

A morphism $f\colon C_{*}\r D_{*}$ induces a strict tensor functor $\Gamma f\colon\Gamma  C_{*}\r\Gamma  D_{*}$ given on objects by $f_{0}$ and on morphisms by $f_{0}\ltimes f_{1}$. A homotopy $\alpha\colon f\rr g$ between two morphisms
$f,g\colon C_*\to D_*$ induces a tensor natural transformation $\Gamma\alpha\colon\Gamma f\rr\Gamma g$ defined by $(\Gamma \alpha)(c_{0})=(g_{0}(c_{0}),\alpha(c_{0}))$. In this way, $\Gamma$ defines a $2$-functor from the $2$-category of  stable quadratic modules to the $2$-category of \Picardgroupoids.
\end{defn}

\begin{defn}\label{homotopygroups}
The
{\em homotopy groups} of a stable quadratic module $C_*$ are
$$
\pi_0(C_*)=C_0/\partial (C_1),\qquad\qquad \pi_1(C_*)=\ker\partial.
$$
The $k$-{\em invariant} is the natural homomorphism
\begin{align*}
\eta\colon\pi_0(C_*)\otimes\mathbb Z/2&\To  \pi_1(C_*), \\
{ [c_0]\otimes 1}&\;\mapsto\; \grupo{c_0,c_0}.
\end{align*}

Homotopy groups are functors from the category $\mathbf{squad}$ of stable quadratic modules, and the $k$-invariant is a natural transformation. A \emph{weak equivalence} or \emph{quasi-isomorphism} is a morphism which induces isomorphisms on homotopy groups. The \emph{homotopy category} $\ho\mathbf{squad}$ is obtained from $\mathbf{squad}$ by formally inverting quasi-isomorphisms. It is known that two stable quadratic modules are weakly equivalent if and only if they have isomorphic $k$-invariant.

A stable quadratic module $C_*$ is \emph{$0$-free} if $C_0=\nig{E}$ 
is the nilpotent group of class two freely
generated by a set $E$, i.e.~the quotient of the free non-abelian group $\grupo{E}$ by triple commutators. We denote by $\mathbf{squad}_{0}$ the full subcategory spanned by $0$-free objetcs
\end{defn}

\begin{rem}
Notice that $\pi_{0}(C_{*})$ is the group of isomorphism classes of objects in $\Gamma C_{*}$, and $\pi_{1}(C_{*})$ is the automorphism group of the tensor unit in $\Gamma C_{*}$.  The $k$-invariant measures the deviation of $\Gamma C_*$ from being strictly commutative. The homotopy category  $\ho\mathbf{squad}$ is equivalent to the category $\mathbf{squad}_{0}/\!\simeq$ obtained by dividing out homotopies from  $\mathbf{squad}_{0}$.
\end{rem}

\begin{exm}\label{sqmexamples2}
The homotopy groups and the $k$-invariants of the stable quadratic modules in Example \ref{sqmexamples} are:
\begin{enumerate}
\item $\pi_{0}=\coker f$, $\pi_{1}=\ker f$, and the $k$-invariant vanishes.

\item $\pi_{0}=G/H$, $\pi_{1}=0$, and the $k$-invariant vanishes for obvious reasons.

\item $\pi_{0}=\mathbb Z$, $\pi_{1}=\mathbb Z/2$, and the $k$-invariant is the natural projection $\mathbb Z\onto \mathbb Z/2$.

\item $\pi_{0}=\mathbb Z$, $\pi_{1}=R^{\times}$, and the $k$-invariant is $\mathbb Z\r R^{\times}\colon 1\mapsto -1$.
\end{enumerate}

\end{exm}

The following lemma about homotopies is very useful to deform morphisms, see \cite[Lemmas 2.1.13 and 2.1.14]{malte}.

\begin{lem}\label{abouthomo}
Let $g\colon C_{*}\r D_{*}$ be a morphism of stable quadratic modules with $C_{0}=\nig{E}$. Any map $E\r D_{1}$ extends to a map
$$\alpha\colon C_{0}\To  D_{1}$$ 
satisfying
$$\alpha(c_{0}+c_{0}')=\alpha(c_{0})^{g_0(c_{0}')}+\alpha(c_{0}'),\qquad c_{0},c_{0}'\in C_{0}.$$
Moreover, there is a unique morphism $f=g+\alpha\colon C_{*}\r D_{*}$, defined as
\begin{align*}
f(c_{0})&=g(c_{0})+\partial\alpha(c_{0}),&
f(c_{1})&=g(c_{1})+\alpha\partial(c_{1}),&
c_{i}\in C_{i},&\; i=0,1,
\end{align*}
such that $\alpha$ is a homotopy $\alpha\colon f\Rightarrow g$.
\end{lem}

\begin{defn}
A \emph{strong deformation retraction} is a special kind of homotopy equivalence between stable quadratic modules, given by a diagram
$$\EZDIAG{C_{*}}{D_{*}}{p}{j}{\alpha},$$
where $p$ and $j$ are morphisms such that $pj=1_{D_{*}}$ and $\alpha\colon jp\Rightarrow 1_{C_{*}}$ is a homotopy satisfying $\alpha j=0$ and $p\alpha=0$.
\end{defn}

%

The following lemma will help us to define strong deformation retractions.

\begin{lem}\label{cacharro}
Consider a  $0$-free stable quadratic module  $C_{*}$ with  $C_{0}=\nig{E}$. Moreover, suppose $j\colon D_{*}\r C_{*}$ is a levelwise injective morphism of stable quadratic modules  such that $D_{0}=\nig{E'}$, $E'\subset E$,   and $j_{0}$ is induced by the inclusion. Let $\alpha\colon E\r C_{1}$ be a map. Assume that the morphism $f=1_{C_{*}}+\alpha\colon C_{*}\r C_{*}$ from Lemma \ref{abouthomo} factors as $f=jp$, and that the homotopy $\alpha$ satisfies 
\begin{align*}
\alpha(e')&=0,\quad e'\in E',&
\alpha(e+\partial\alpha(e))
&=0,\quad e\in E.
\end{align*}
Then $\alpha$, $p$ and $j$ define a strong deformation retraction.
\end{lem}

\begin{proof}
The only non-obvious formulas are  $p\alpha=0$ and $pj=1_{D_{*}}$. Since $j$ is levelwise injective, it is enough to check that $f\alpha=0$ and $fj=j$. The formulas defining $f=1_{C_{*}}+\alpha$ and the first equation in the statement show that the former implies the latter. The second equation proves $f\alpha=0$, since $\alpha(e+\partial\alpha(e))=\alpha(e)+\alpha\partial\alpha(e)=f\alpha(e)$.
\end{proof}

We now consider the left adjoint of the functor sending a stable quadratic module $C_{*}$ to the pair of sets $(C_{0},C_{1})$, see \cite[Appendix A]{1tk}.  Stable quadratic modules  in the image of this left adjoint are said to be \emph{free}.

\begin{defn}
The \emph{free stable quadratic module}
$F_{*}^{s}(E_{0},E_{1})$ on a pair of sets $(E_{0},E_{1})$ can be constructed as follows: 
$F_{0}^{s}(E_{0},E_{1})=\nig{E_{0}\sqcup E_{1}}$.
Moreover, if we denote $\abg{E}$ the free abelian group on a set $E$, then
$$F_{1}^{s}(E_{0},E_{1})=\abg{E_0}\otimes\mathbb{Z}/2\times\wedge^2\abg{E_0}
\times\abg{E_0}\otimes\abg{E_1}\times\nig{E_1}.
$$
The homomorphism $\partial$ and the bracket $\grupo{\cdot,\cdot}$ in $F_{*}^{s}(E_{0},E_{1})$  are defined by the following formulas:
\begin{align*}
\partial(e_{0}\otimes 1,e_0'\wedge e_0'',e_{0}'''\otimes e_{1},e_{1}')&=[e_0'',e_0']+[e_{1},e_{0}''']+e_{1}';&
\grupo{e_0,e_0}&=(e_0\otimes1,0,0,0);\\
\text{if }e_{0}\neq e_{0}'\text{ then}\quad \grupo{e_0,e_0'}&=(0,e_0\wedge e_0',0,0);&
\grupo{e_0, e_1}&=(0,0,e_0\otimes e_1,0);\\
\grupo{e_1, e_1'}&=(0,0,0,[e_1',e_1]).
\end{align*}

Given two sets of relations $R_{i}\subset F_{i}^{s}(E_{0},E_{1})$, $i=0,1$, the \emph{stable quadratic module $C_{*}$ with generators $(E_{0},E_{1})$ and relations $(R_{0},R_{1})$} is defined as follows: $C_{0}$ is the quotient of $F_{0}^{s}(E_{0},E_{1})$ by the normal subgroup $N_{0}$ generated by $R_{0}\cup \partial R_{1}$, and $C_{1}$ is the quotient of $F_{1}^{s}(E_{0},E_{1})$ by the normal subgroup generated by 
$R_{1}$ and $\grupo{F_{0}^{s}(E_{0},E_{1}),N_{0}}$. The homomorphism $\partial$ and the bracket $\grupo{\cdot,\cdot}$ on $F_{*}^{s}(E_{0},E_{1})$ induce a
structure of a stable quadratic module on $C_*$. This is the unique
structure for which the natural projection $F_{*}^{s}(E_{0},E_{1})\onto C_{*}$ is a morphism of stable quadratic modules.
\end{defn}

Stable quadratic modules defined by a presentation satisfy the obvious universal property.

\subsection{Universal determinant functors}\label{losejemplillos}

In this section we present universal determinant functors
for all cases considered above. 
We will actually construct them by using presentations of 
stable quadratic modules.

\begin{defn}\label{univtargetwald}
Let $\C{W}$ be a Waldhausen category. We define the stable quadratic module $\D{D}_*(\C{W})$ by generators
\begin{enumerate}
\renewcommand{\labelenumi}{(G{\arabic{enumi}})}
\item $[\X]$ for any object, in dimension $0$,
\item $[\X\st{\sim}\r \X']$ for any weak equivalence, in dimension $1$,
\item $[\Delta]$ for any cofiber sequence as in \eqref{cofibersequence}, in dimension $1$,
\end{enumerate}
and relations
\begin{enumerate}
\renewcommand{\labelenumi}{(R{\arabic{enumi}})}
\item $\partial[\X\st{\sim}\r \X']=-[\X']+[\X]$,
\item $\partial[\Delta]=-[\Y]+[C^f]+[\X]$,
\item $[0]=0$ for the zero object,
\item $[\X\st{1}\r \X]=0$ for any object,
\item $[\X\st{1}\into \X\onto 0]=0=[0\into\X\st{1}\onto \X]$ for any object,
\item for any pair of composable weak equivalences $\X\st{\sim}\r \Y \st{\sim}\r \Z$,
$$[\X\st{\sim}\r \Z]\;\;=\;\;[\Y\st{\sim}\r \Z]+[\X\st{\sim}\r \Y],$$
\item for any weak equivalence of cofiber sequences $\Phi\colon\Delta \st{\sim}\r \Delta'$ as in \eqref{wecofibersequence},
$$[\X\st{\sim}\r \X']+[C^{f}\st{\sim}\r C^{f'}]^{[X]}\;\;=\;\;-[\Delta']+[\Y\st{\sim}\r \Y']+[\Delta],$$
\item for any staircase diagram $\Theta$ as in \eqref{stair},
$$[\Delta_{g}]+[\Delta_{f}]\;\;=\;\;[\Delta_{gf}]+[\widetilde{\Delta}]^{[\X]},$$
\item for any 
two objects $\X$ and $\Y$, 
$$\grupo{[\X],[\Y]}\;\;=\;\;-[\Delta_{2}]+[\Delta_{1}],$$
where $\Delta_{1}$ and $\Delta_{2}$ are the cofiber sequences in \eqref{inprowald}.
\end{enumerate}
This stable quadratic module was first considered in \cite{1tk}.

The stable quadratic module $\DD{D}_*(\C{W})$ is defined by almost the same presentation, modified in the following way. We have generators (G2) for all isomorphisms $\X{\cong} \X'$ in $\ho\C W$, (R1--5,8,9) remain the same, (R6) must hold for any pair of composable isomorphisms in $\ho\C W$, and (R7) is required for all cofiber sequence isomorphisms $\Phi\colon\Delta {\cong} \Delta'$ in $\ho(S_2\C W)$, see \cite[Definition 5.4]{malt}.

Given a triangulated category $\C T$ we define the stable quadratic module $\D{D}_*(^{b}\!\C{T})$ as follows. The definition is by generators and relations as above. Generators (G1--3) correspond to objects, isomorphisms $\X{\cong} \X'$, and \exact\ triangles $\Delta$ as in \eqref{triangle}, respectively. Relations (R1--4) are the same,  (R5) is
\begin{itemize}
\item[{(R5)}] $[\X\st{1}\r \X\r 0\r\Sigma X]=0=[0\r\X\st{1}\r \X\r 0]$ for any object,
\end{itemize}
relation (R6) is imposed for any pair of composable isomorphisms, (R7) is required for any exact triangle isomorphism $\Phi\colon\Delta {\cong} \Delta'$  as in \eqref{isotriangle}, (R8) must hold for any octahedron $\Theta$ as in \eqref{octa}, and the exact triangles $\Delta_{1}$ and $\Delta_{2}$ in (R9) are defined in \eqref{inprotriang}.

The stable quadratic module $\D{D}_*(^{d}\!\C{T})$ is presented as $\D{D}_*(^{b}\!\C{T})$, except that we only require (R8) for special octahedra. 

For the presentation of $\D{D}_*(^{v}\!\C{T})$, we allow all virtual triangles as generators (G3), (R7) must hold for any virtual triangle isomorphism, and (R8) for any virtual octahedron $\Theta$.

If $\C T_3$ is a $3$-pretriangulated category, the stable quadratic module $\D{D}_*(^{s}\!\C{T}_3)$ is again presented as $\D{D}_*(^{b}\!\C{T})$, 
but we only impose (R8) for \exact\ octahedra.
\end{defn}

\begin{rem}
All these stable quadratic modules  are $0$-free. The degree $0$ group is free of nilpotence class $2$ with basis given by the set of non-trivial objects.

The presentations above are not minimal. Relation (R3) follows from (R2) and (R5),  (R4) follows from (R6), and (R5) is equivalent to
\begin{itemize}
\item[$(\mathrm{R}5')$] $[0\into 0\onto 0]=0$ in $\D{D}_{*}(\C W)$.
\item[$(\mathrm{R}5')$] $[0\r 0\r 0\r\Sigma 0]=0$ in $\D{D}_{*}({}^{\filledsquare}\C T)$, $\filledsquare=b,d,s,v$.
\end{itemize}

In all cases, if (R9) holds for a given coproduct of two objects then it also holds for any other coproduct. This follows from the uniqueness of coproducts up to isomorphism and the rest of relations.

For $\D{D}_*(\C W)$ and $\DD{D}_*(\C W)$ it is enough to impose (R8) for only one staircase completion of each two composable cofibrations $X\into Y\into Z$, as any two completions are isomorphic. Similarly, for $\D{D}_*({}^{s}\C T_3)$ is is enough to impose (R8) for only one \exact\ octahedron completing each pair of composable morphisms $X\r Y\r Z$. This is not the case for ordinary triangulated categories, see \cite{kuenzer}.
\end{rem}

The following proposition provides smaller presentations in some cases. It follows from Proposition \ref{porquillo}.

\begin{prop}\label{menos}
Let $\C{W}$ be a Waldhausen category where weak equivalences are isomorphisms and $\C{T}$ a (strongly) triangulated category. Then $\D{D}_*(\C{W})$ and $\D{D}_*(^{\filledsquare}\!\C{T})$, $\filledsquare=b,d,s,v$, have a presentation with generators $(\mathrm{G}1)$ and $(\mathrm{G}3)$ and relations $(\mathrm{R}2)$, $(\mathrm{R}5')$, $(\mathrm{R}8)$ and $(\mathrm{R}9)$. 
\end{prop}

The following theorem is the main result of this paper.

\begin{thm}\label{mainconcreto}
Let $\C W$ be a Waldhausen category, $\C T$ a triangulated category, and $\C T_3$ a $3$-pretriangulated category. We have:
\begin{itemize}
 \item a determinant functor $\det\colon\C W\r \Gamma\D{D}_*(\C{W})$,
\item a derived determinant functor $\det\colon\C W\r \Gamma\DD{D}_*(\C{W})$,
\item a Breuning determinant functor $\det\colon\C T\r \Gamma\D{D}_*(^{b}\!\C{T})$,
\item a special determinant functor $\det\colon\C T\r \Gamma\D{D}_*(^{d}\!\C{T})$,
\item a virtual determinant functor $\det\colon\C T\r \Gamma\D{D}_*(^{v}\!\C{T})$,
\item a determinant functor $\det\colon\C T_3\r \Gamma\D{D}_*(^{s}\!\C{T}_3)$,
\end{itemize}
all of which are universal. They are defined 
(in the notation of Definition \ref{gammatron}) by:
\begin{itemize}
 \item $\det(\X)=[\X]$ for any object,
\item $\det(\X\r \X')=([\X'],[\X\r\X'])$ for any weak equivalence or isomorphism,
\item $\det(\Delta)=([\Y],[\Delta])$ for any cofiber sequence, \exact\ triangle or virtual triangle, as in \eqref{cofibersequence} or \eqref{triangle}.
\end{itemize}
Moreover, if we simply regard the targets as categorical groups, forgetting the symmetry, these determinant functors are also universal among non-commutative determinant functors.
\end{thm}

This result follows from Theorem \ref{main1} and Corollary \ref{tambien}.

If our Waldhausen or (strongly) triangulated category has functorial coproducts, we may use a quotient of the stable quadratic module in Definition \ref{univtargetwald} instead.

\begin{defn}\label{funco}
A category has \emph{functorial coproducts} if it has a monoidal structure $+$ which is strictly associative and unital, the unit $0$ is an initial object, and the following diagram is a coproduct for any two objects $X$ and $Y$,
$$\X = \X+0\longrightarrow \X+\Y\longleftarrow 0+\Y = \Y.$$

If $\C W$ and $\C T$ are  a Waldhausen category and a (strongly) triangulated category with functorial coproducts, respectively, we define the stable quadratic modules $\D{D}^{+}_{*}(\C W)$ and $\D{D}^{+}_{*}({}^{\filledsquare}\C T)$, $\filledsquare=b,d,v,s$, as the quotient of $\D{D}_{*}(\C W)$ and $\D{D}_{*}({}^{\filledsquare}\C T)$  by the following extra relation,
\begin{itemize}
 \item[(R10)] $[Y\into X+Y\onto X]=0$ for any pair of objects $\X$ and $\Y$ in $\C{W}$.
  \item[(R10)] $[Y\r X+Y\r X\st{0}\r \Sigma Y]=0$ for any pair of objects $\X$ and $\Y$ in $\C{T}$.
\end{itemize}
\end{defn}

\begin{rem}\label{dosmas}
In $\D{D}^{+}_{*}(\C W)$ and $\D{D}^{+}_{*}({}^{\filledsquare}\C T)$, for any two objects $X$ and $Y$, we have
\begin{align*}
[X+Y]&=[X]+[Y],\\
(\mathrm{R}9')\qquad \grupo{[X],[Y]}&=[Y+X\cong X+Y].
\end{align*}
Actually, (R7), (R9$'$) and (R10) imply (R9), compare \cite[Remark 4.1]{k1wc}. Notice also that (R5$'$) is a special case of (R10). 

In $\D{D}^{+}_{*}({}^{\filledsquare}\C T)$, given two \exact\ (or virtual if $\filledsquare=v$) triangles,
\begin{align*}
&\hspace{-30pt}[X+X'\st{f+f'}\To Y+Y'\st{i+i'}\To Z+Z'\st{q+q'}\To \Sigma X+\Sigma X']\\[-3pt]
={}&[X\st{f}\r Y\st{i}\r Z\st{q}\r \Sigma X]^{[Y']}
+[X'\st{f'}\r Y'\st{i'}\r Z'\st{q'}\r \Sigma X']
+\grupo{[X],[Z']}.
\end{align*}
See Corollary \ref{suma2} below. See also \cite[Lemma 4.8]{k1wc} for the corresponding result in $\D{D}^{+}_{*}(\C W)$.

\end{rem}

\begin{prop}\label{kkk}
Let $\C W$ and $\C T$ be a Waldhausen category and a (strongly) triangulated category with functorial coproducts, respectively. Assume the set of objects is free as a monoid under $+$ in both cases. Then the natural projection,
\begin{align*}
\D{D}_{*}(\C W)&\onto\D{D}^{+}_{*}(\C W);&
\D{D}_{*}({}^{\filledsquare}\C T)&\onto\D{D}^{+}_{*}({}^{\filledsquare}\C T),\quad\filledsquare=b,d,v,s;
\end{align*}
is a weak equivalence. It is actually part of a strong deformation retraction.
\end{prop}

This follows from Proposition \ref{sumnormalization} below.

\begin{rem}\label{masmenos}
Under the hypotheses of the previous theorem, $\D{D}^{+}_{*}(\C W)$ and $\D{D}^{+}_{*}({}^{\filledsquare}\C T)$ are also $0$-free. The degree $0$ group of nilpotency class $2$ is freely generated by any basis of the free monoid of objects.
\end{rem}

Let us consider a futher simplification for additive categories. It will lead to some explicit computations.

\begin{prop}\label{minimismo}
Let $\C A$ be an additive category, regarded as a split exact category. Suppose it satisfies the hypotheses of Theorem \ref{kkk}. 
Then $\D{D}^{+}_{*}(\C A)$ has a presentation with generators  $(\mathrm{G}1)$ and $(\mathrm{G}2)$ and relations $(\mathrm{R}1)$, $(\mathrm{R}6)$, $(\mathrm{R}9')$ and
\begin{itemize}
\item[$(\mathrm{R}7')$] $[f\colon X\cong X']^{[Y']}+[g\colon Y\cong Y']=[f +  g\colon  X +  Y\cong X' +  Y']$.
\end{itemize}
\end{prop}

\begin{proof}
Applying (R7) and (R10) to
$$\xymatrix{Y\ar@{>->}[r]\ar[d]^{\cong}_{g}&X+Y\ar[d]^{\cong}_{f+g}\ar@{->>}[r]&X\ar[d]^{\cong}_{f}\\
Y'\ar@{>->}[r]&X'+Y'\ar@{->>}[r]&X'}$$
we obtain (R7$'$).

Given a short exact sequence $\Delta\colon X\st{f}\into Y\st{p}\onto C$, a splitting
$$\xymatrix{X\ar@{>->}[r]\ar[d]^{\cong}_{1_{X}}&C+X\ar[d]^{\cong}_{(s_{\Delta},f)}\ar@{->>}[r]&C\ar[d]^{\cong}_{1_{C}}\\
X\ar@{>->}[r]^{f}&Y\ar@{->>}[r]^{p}&C}$$
and (R7) yield
$$[\Delta]=[(s_{\Delta},f)].$$
Such a splitting is not unique, but if $(s_{\Delta}',f)$ is another then there is a unique $h\colon C\r X$ such that $s_{\Delta}'=s_{\Delta}+fh$, i.e.
$$(s_{\Delta}',f)=(s_{\Delta},f)\left(
\begin{smallmatrix}
1_{C}&0\\h&1_{X}
\end{smallmatrix}
\right).$$
Hence, by (R6), 
$$[(s_{\Delta}',f)]=[(s_{\Delta},f)]+\left[\left(
\begin{smallmatrix}
1_{C}&0\\h&1_{X}
\end{smallmatrix}
\right)\right]=[(s_{\Delta},f)].$$
compare the proof of \cite[Proposition 1.1]{ranicki} for the vanishing of $\left[\left(
\begin{smallmatrix}
1_{C}&0\\h&1_{X}
\end{smallmatrix}
\right)\right]$.

Let us check that (R7) and (R8) follow from the relations in the statement. Given
$$\xymatrix{
\X\ar@{>->}[r]^-f\ar[d]^\cong_{\alpha}&\Y\ar@{->>}[r]\ar[d]^\cong_{\beta}&C^f\ar[d]^\cong_{\gamma}\\
\X'\ar@{>->}[r]^-{f'}&\Y'\ar@{->>}[r]&C^{f'}}$$
we can choose compatible splittings defined by $s_{\Delta}$ and $s_{\Delta'}=\beta s_{\Delta}\gamma^{-1}$. Hence by (R6) and  (R7$'$)
\begin{align*}
[(s_{\Delta'},f')]&=[(\beta s_{\Delta}\gamma^{-1},\beta f\alpha^{-1})]
=\left[\beta (s_{\Delta},f)
(\gamma^{-1}+\alpha^{-1})
\right]\\
&=
[\beta]+ [(s_{\Delta},f)]+[\gamma^{-1}]^{[X]}+[\alpha^{-1}]
=
[\beta]+ [(s_{\Delta},f)]-[\gamma]^{[X]}-[\alpha].
\end{align*}
This is (R7).

Given a staircase diagram
$$\xymatrix{&&C^g\\
&C^f\ar@{>->}[r]^{\bar g}&C^{gf}
\ar@{->>}[u]\\
\X\ar@{>->}[r]^-f&\Y\ar@{>->}[r]^-g\ar@{->>}[u]&\Z\ar@{->>}[u]}$$
we can choose splittings defined by morphisms $s_{\Delta_{f}}$, $s_{\Delta_{g}}$, $s_{\Delta_{gf}}$ and $s_{\widetilde{\Delta}}$, such that $s_{\Delta_{g}}=s_{\Delta_{gf}}s_{\widetilde{\Delta}}$ and $s_{\Delta_{gf}}\bar{g}=gs_{\Delta_{f}}$. Therefore,
\begin{align*}
(s_{\Delta_{g}},g)(1_{C^{g}}+(s_{\Delta_{f}},f))&=(s_{\Delta_{gf}},gf)(
(s_{\widetilde{\Delta}},\bar{g})+1_{X}).
\end{align*}
Now (R8) follows from (R6) and (R7$'$) applied to this formula.
\end{proof}

An additive category category $\C A$ satisfies the \emph{Krull--Remak--Schmidt theorem} if any object in $\C A$ is a direct sum of indecomposables and, up to a permutation, the indecomposable components in such a direct sum are uniquely determined up to isomorphism.

\begin{cor}
Let $\C A$ be an additive category satisfying  the Krull--Remak--Schmidt theorem. Denote $S$ a skeletal set of indecomposables. Then $\D{D}_{*}(\C A)$ is weakly equivalent to
$$K_{0}(\C A)\otimes K_{0}(\C A)\st{\grupo{\cdot,\cdot}}\To K_{1}(\C A)\st{0}\To K_{0}(\C A)=\abg{S},$$
where $\grupo{[X],[X]}=[-1_{X}]$ and $\grupo{[X],[Y]}=0$ for $X,Y\in S$, $X\neq Y$.
\end{cor}

\begin{proof}
We can suppose without loss of generality that $\C A$ has functorial coproducts and that the monoid of objects is freely generated by $S$, see \cite[Proposition 4.3]{k1wc}. By Remark \ref{masmenos}, $\D{D}_{0}^{+}(\C A)=\nig{S}$. Moreover, by the Krull--Remak--Schmidt theorem the image of $\partial$ is the commutator subgroup and $\pi_{0}\D{D}_{*}^{+}(\C A)=\abg{S}=K_{0}(\C A)$. The commutator bracket induces a monomorphism $\wedge^{2}\abg{S}\hookrightarrow\nig{S}$,  $[X]\wedge [Y]\mapsto[[X],[Y]]$. This injection factors through $\D{D}_{1}^{+}(\C A)$. In order to define a factorization, we need to choose a total order $\leq$ in $S$. A factorization is given by
\begin{align*}
\wedge^{2}\abg{S}&\hookrightarrow \D{D}_{1}^{+}(\C A),&
[X]\wedge [Y]&\mapsto \grupo{[Y],[X]},\qquad X<Y\in S.
\end{align*}
Dividing out $\wedge^{2}\abg{S}$ from $\D{D}^+_{*}(\C A)$ we obtain a weakly equivalent stable quadratic module $C_*$ with $C_0=\pi_0C_0=\abg{S}=K_{0}(\C A)$ and $\partial=0$. We must now identify $C_1$ and the bracket. Proposition \ref{minimismo} and the explicit description of a stable quadratic module defined by a presentation show that $C_1$ is the abelian group defined by generators $[f\colon X\cong Y]$ for each isomorphism in $\C A$,
and relations
\begin{itemize}
\item $
2 \cdot\left[\left(
\begin{smallmatrix}
 0&1_X\\1_X&0
\end{smallmatrix}
\right)\right]=0$ for $X\in S$,
\item $
\left[\left(
\begin{smallmatrix}
 0&1_X\\1_Y&0
\end{smallmatrix}
\right)\right]=0$ for $X,Y\in S$, $X\neq Y$,
\item $[g\colon Y\cong Z]+[f\colon X\cong Y]=[gf]$ for any two composable isomorphisms,
\item $[f\colon X\cong X']+[g\colon Y\cong Y']=[f +  g]$ for any two isomorphisms.
\end{itemize}
The first relation follows from the third one. Hence, $C_1$ is the quotient of Ranicki's $K_1^{\operatorname{iso}}(\C A)$ by the isomorphisms of the canonical structure in the sense of \cite[\S5]{ranicki} defined by the permutations of different $S$ factors in a direct sum decomposition. Therefore, $C_1=K_1(\C A)$ by \cite[Proposition 5.3]{ranicki}. The computation of the non-trivial brackets follows as in \cite[Corollary 1.10]{1tk}.
\end{proof}

\begin{rem}\label{warrior}
This corollary can be applied to several additive categories: modules of finite length over a ring, finitely generated projective modules over a semiperfect ring, finitely generated free modules over a ring $R$ with the invariant basis number property, etc. In particular, if $R$ is a commutative local ring or an Euclidean domain we obtain the stable quadratic module in Example \ref{sqmexamples} (4).
\end{rem}

In this section, we have obtained a connection between determinant functors and $K$-theory through an explicit computation of a generic but simple example. In the following section we will see that this relation is much broader.

\subsection{The connection to $K$-theory}\label{contok}

Let $\ho\mathbf{Spec}_0$ be the full coreflective subcategory of the
stable homotopy category spanned by \emph{connective spectra},
i.e.~spectra with trivial homotopy groups in negative
dimensions. Let $\ho\mathbf{Spec}_0^1$ be the full reflective subcategory of $\ho\mathbf{Spec}_0$ spanned by spectra with homotopy groups concentrated in dimensions $0$ and $1$. The reflection functor $\ho\mathbf{Spec}_0\r \ho\mathbf{Spec}_0^1$ takes a connective spectrum to its \emph{$1$-type}. It is well known that the homotopy category of \Picardgroupoids\ is equivalent to  $\ho\mathbf{Spec}_0^1$, and the equivalence is compatible with the corresponding notions of homotopy groups and $k$-invariant. Recall that, on homotopy groups of spectra, the $k$-invariant is simply the action of the stable Hopf map. There are several ways of realizing this equivalence. The equivalence in \cite{1tk} between $\ho\mathbf{Spec}_0^1$ and the homotopy category of stable quadratic modules $\ho\mathbf{squad}$ is particularly well adapted to the goal of this paper.

\begin{lem}[{\cite[Lemma 4.22]{1tk}}]\label{equi}
There is a functor
$$\lambda_0\colon\ho\mathbf{Spec}_0\To \ho\mathbf{squad}$$
together with natural isomorphisms
$$\pi_i\lambda_0X\cong\pi_iX,\;\;i=0,1,$$ 
compatible with the $k$-invariants, which restricts to an equivalence of
categories
$$\lambda_0\colon\ho\mathbf{Spec}_0^1\st{\sim}\To \ho\mathbf{squad}.$$
\end{lem}

Therefore the functor $\lambda_0$ can be regarded as an algebraic model for the $1$-type of  a connective spectrum.

\begin{exm}\label{sqmexamples3}
If $S$ is the sphere spectrum, its first two homotopy groups and the $k$-invariant are as in Example \ref{sqmexamples2} (3), hence $\lambda_{0}S$ is weakly equivalent to Example \ref{sqmexamples} (3).

If $R$ is a commutative local ring, the first two homotopy groups of its connective $K$-theory spectrum $K(R)$  are as in  Example \ref{sqmexamples2} (4). The  $k$-invariant connecting them is also as described therein. This can be easily checked by looking at the canonical map of spectra $S\r K(R)$, the unit of the ring spectrum structure on $K(R)$, which is the identity in $\pi_{0}$ and $\mathbb Z/2\r R^{\times}\colon 1\mapsto -1$ in $\pi_{1}$. Therefore $\lambda_{0}K(R)$ is weakly equivalent to Example \ref{sqmexamples} (4).
\end{exm}

Examples of connective spectra  are  Quillen's $K$-theory
of an exact category $K(\C{E})$ \cite{hakt} (although there is also a non-connective version), Waldhausen's $K$-theory of a category
with cofibrations and weak equivalences $K(\C{W})$ \cite{akts}, Garkusha's derived
$K$-theory  of an exact category $DK(\C{E})$   \cite{sdckt2}, its generalization for a Waldhausen category $DK(\C{W})$ \cite{malt}, Maltsiniotis's $K$-theory of a strongly
triangulated category $K(^{s}\!\C{T}_{\infty})$ \cite{cts}, 
and
two of Neeman's $K$-theories of a
triangulated category, $K({}^d\!\C{T})$ and $K({}^v\!\C{T})$ \cite{kttcneeman}, see Section \ref{tambien} below.

\begin{thm}\label{main2}
Let $\C{W}$ be a Waldhausen category, $\C{T}$ a triangulated category,  and 
$\C{T}_{\infty}$ a strongly triangulated category. 
There are natural isomorphisms in $\ho\mathbf{squad}$:
\begin{align*}
\D{D}_* (\C{W})&\cong\lambda_0K(\C{W}),&
\DD{D}_*(\C{W})&\cong\lambda_0 DK(\C{W}),&
\D{D}_* (^{s}\!\C{T}_{\infty})&\cong\lambda_0 K(^{s}\!\C{T}_{\infty}),\\
\D{D}_* (^{d}\!\C{T})&\cong\lambda_0 K(^{d}\!\C{T}),&
\D{D}_* (^{v}\!\C{T})&\cong\lambda_0 K(^{v}\!\C{T}). 
\end{align*}
\end{thm}

This follows from Theorem \ref{cosa} and Examples \ref{mainexm} and \ref{mainexmk}.

The stable quadratic module $\D{D}_* (^{d}\!\C{T})$ is not related to any $K$-theory spectrum. Actually, Breuning defines the $K$-theory of a triangulated category in dimensions $i=0,1$ as  $$K_i(^b\!\C{T})=\pi_i\D{D}_*(^b\!\C{T}).$$

\subsection{Comparison morphisms}\label{comparase}
There are several comparison morphisms between the $K$-theories which appeared in the introduction. In this section we recover these morphisms in dimensions $i=0,1$ from certain explicit morphisms between the stable quadratic modules of Definition \ref{univtargetwald} that calculate, by Theorem \ref{main2}, the 1-types of the $K$-theory spectra. 
For more explicit descriptions of the maps of spectra involved, and of the spectra themselves, we refer the reader to Section \ref{torremolinos}.


For a Waldhausen category $\C W$, we have honest and derived determinant functors, see Definitions \ref{detwald} and \ref{derdet}. A derived determinant functor $\det'\colon\C W\r\C P$ yields an honest determinant functor, defined by precomposition with the functor sending a weak equivalence in $\C W$ to its corresponding isomorphism in the homotopy category $\ho\C W$,
$$\det\colon \we(\C W)\To\iso{\ho \C W}\st{\det'}\To\C P.$$
Additivity data for $\det$ are defined as for $\det'$. This gives rise to a morphism of stable quadratic modules defined on generators by,
\begin{align}\label{normalder}
\D{D}_{*}(\C W)&\To \DD{D}_{*}(\C W),\\
\nonumber [X]&\;\mapsto\;[X],\\
\nonumber [f\colon X\st{\sim}\r X']&\;\mapsto\;[\{f\}\colon X\st{\cong}\r X'],\\
\nonumber [\Delta]&\;\mapsto\;[\Delta].
\end{align}
Here $X$ is an object of $\C W$, $\{f\}$ denotes the homotopy class of a weak equivalence $f$ in $\C W$, and $\Delta$ is a cofiber sequence. This morphism was shown in \cite{malt} to be an isomorphism, hence
\begin{align*}
K_{0}(\C W)&\cong DK_{0}(\C W),&
K_{1}(\C W)&\cong DK_{1}(\C W).
\end{align*}

Let $\C T$ be a triangulated category. Special octahedra are also ordinary, and \exact\ triangles and ordinary octahedra are also virtual. Hence, restricting additivity data, any virtual determinant functor produces a Breuning determinant functor, and any Breuning determinant functor yields a special determinant functor. This gives rise to obvious morphisms of stable quadratic modules defined by the aforementioned inclusions of sets of generators,
\begin{equation}\label{dv}
\D{D}_{*}(^{d}\C T)\To \D{D}_{*}(^{b}\C T)\To \D{D}_{*}(^{v}\C T).
\end{equation}
These morphisms are the identity in degree $0$. They can be easily shown to be isomorphisms on $\pi_{0}$, see \cite{kttcneeman,dftc}, 
$$K_{0}(^{d}\C T)\cong K_{0}(^{b}\C T)\cong K_{0}(^{v}\C T).$$
Moreover, the first one is surjective in degree $1$, since  $\D{D}_{*}(^{d}\C T)$ and $\D{D}_{*}(^{b}\C T)$ have the same generators, but the latter has more relations than the former, corresponding to non-special octahedra. In particular, the first morphism induces an epimorphism in $\pi_{1}$,
\begin{equation*}
K_{1}(^{d}\C T)\onto K_{1}(^{b}\C T).
\end{equation*}
We do not know of any example where this morphism has a non-trivial kernel. This may be due to our lack of knowledge about non-special octahedra.

Let $\C T$ be a triangulated category with a $t$-structure with heart $\C A$. Exact sequences in $\C A$ extend uniquely to exact triangles in $\C T$, see Section \ref{forgrad}, and staircase diagrams extend uniquely to octahedra, which are therefore special, see Remark \ref{speziale}. Hence, a special determinant functor on $\C T$ restricts to a determinant functor on $\C A$. This defines an obvious stable quadratic module morphism
\begin{equation}\label{heartd}
\D{D}_{*}(\C A)\To \D{D}_{*}(^{d}\C T).
\end{equation}
Theorem \ref{todo} below shows that, if the $t$-structure is bounded and non-degenerate, this morphism and the two morphisms in \eqref{dv}   are weak equivalences.  In particular,
$$K_{i}(\C A)\cong K_{i}(^{d}\C T)\cong K_{i}(^{b}\C T)\cong K_{i}(^{v}\C T),\qquad i=0,1.$$
Actually, the morphisms in \eqref{dv} are isomorphisms by the five lemma, since  they are the identity in degree $0$. The isomorphism $K_{1}(\C A)\cong K_{1}(^{b}\C T)$ was previously obtained by Breuning \cite[Corollary 5.3]{dftc} using different methods.

Similarly, if $\C E$ is an exact category, a determinant functor on the strongly triangulated category $D^{b}(\C E)$ `restricts' to a Deligne determinant functor on the Waldhausen category $C^{b}(\C E)$ along the canonical functor $C^{b}(\C E)\r D^{b}(\C E)$, and also on $\C E$, regarded as complexes concentrated in degree $0$. Hence we have stable quadratic module morphisms
\begin{equation}\label{heartdexact}
\D{D}_{*}(\C E)\To\D{D}_{*}(C^{b}(\C E))\To \D{D}_{*}(^{s}D^{b}(\C E)).
\end{equation}
The second morphism here is the identity in degree $0$. It is surjective in degree~$1$, since any \exact\ triangle in $D^{b}(\C T)$ is isomorphic to an \exact\ triangle coming from a cofiber sequence in $C^{b}(\C E)$, and any isomorphism in $D^{b}(\C E)$ can be represented by a zigzag of weak equivalences in $C^{b}(\C E)$. Hence, we obtain an isomorphism in $\pi_0$ and a surjection in $\pi_{1}$,
\begin{align*}
K_{0}(C^{b}(\C E))&\cong K_{0}(^{s}D^{b}(\C E)),&
K_{1}(C^{b}(\C E))&\onto K_{1}(^{s}D^{b}(\C E)).
\end{align*}
The first morphism in \eqref{heartdexact} is a weak equivalence by the following theorem and \cite{tckt}, actually
$$K_{i}(\C E)\cong K_{i}(C^{b}(\C E)),\qquad i\geq 0.$$

\begin{thm}\label{pepillo}
The images under the functor $\lambda_{0}$ of Lemma \ref{equi} of the maps between $K$-theory spectra 
\begin{align*}
K(\C W)\To &DK(\C W),\qquad K({}^{d}\C T)\To K({}^{v}\C T),\qquad K(\C A)\To K({}^{d}\C T),\\
&K(\C E)\st{\sim}\To K(C^{b}(\C E)),\qquad K(C^{b}(\C E))\To K({}^{s}D^{b}(\C E)).
\end{align*}
of Examples \ref{lasrelaciones2} (1), (2), (3), (7) and (8), respectively, coincide with the stable quadratic module morphisms given in \eqref{normalder}, the composite of \eqref{dv}, and \eqref{heartd} and \eqref{heartdexact}. 
\end{thm}

This follows from Theorem \ref{cosa} and Example \ref{lasrelaciones2}.
\medskip

There is yet another comparison morphism of stable quadratic modules which is not related to spectra, since it is connected to Breuning determinant functors. Let $\C T_3$ be a $3$-pretriangulated category. \Exact\ octahedra are ordinary octahedra in the underlying triangulated structure. Therefore, a Breuning determinant functor on the triangulated category underlying $\C T_3$ yields a determinant functor on $\C T_3$ in the sense of Definition \ref{3eme}. This gives rise to a stable quadratic module morphism
$$\D{D}_{*}(^{s}\C T_{3})\To \D{D}_{*}(^{b}\C T_{3}),$$
which is the identity on generators, but the target has more relations than the source, corresponding to non-\exact\ octahedra. It is actually the identity in degree $0$ and surjective in degree $1$. In particular, we obtain an isomorphism on $\pi_{0}$ and a surjection on $\pi_{1}$,
\begin{align*}
K_{0}(^{s}\C T)&\cong K_{0}(^{b}\C T),&
K_{1}(^{s}\C T)&\onto K_{1}(^{b}\C T).
\end{align*}

\section{Applications}

\subsection{Derived and non-derived determinant functors on a Waldhausen category}\label{dnd}

Grothendieck asked in a letter to Knudsen whether determinant functors on an additive or abelian category $\C{E}$ coincide essentially with determinant functors in the bounded derived category $D^{b}(\C{E})$ regarded as triangulated category equipped with a `category of true triangles' \cite[Appendix B]{dfec}. We extend the question to exact categories. We interpret the `category of true triangles' to be the bounded derived category of the exact category $S_{2}(\C{E})$ of short exact sequences in $\C{E}$, which coincides with the homotopy category of the Waldhausen category $S_{2}C^{b}(\C{E})$ of short exact sequences of bounded complexes in $\C{E}$. With this interpretation,  determinant functors in the  triangulated category equipped with a `category of true triangles' are  derived determinant functors in $C^{b}(\C{E})$. 


The Waldhausen category $C^{b}(\C{E})$ has cylinders and a saturated class of weak equivalences, therefore the two following results answer Grothendienck's question and its generalization positively. 

\begin{cor}\label{knudmum}
If we regard $\C{E}$ as the full subcategory of complexes in $C^{b}(\C{E})$ concentrated in degree $0$, then 
any
determinant functor on $\C{E}$ factors through a  determinant
functor in $C^{b}(\C{E})$ in an essentially unique way.
\end{cor}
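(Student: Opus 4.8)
The plan is to deduce Corollary \ref{knudmum} from the machinery already assembled in the paper, rather than by any direct manipulation of determinant functors. The key observation is that ``determinant functor on $\C{E}$'' means determinant functor on the Waldhausen category $\C{E}$ (Definition \ref{detwald}), equivalently on $S_\bullet(\C{E})$, while ``determinant functor in $C^b(\C{E})$'' in the sense relevant here means derived determinant functor on $C^b(\C{E})$, equivalently a determinant functor on $\ho S_\bullet(C^b(\C{E}))$, as explained in Example \ref{lasrelaciones}(1) and Section \ref{dnd}. By the universal property of the targets constructed in Section \ref{universaldet}, both assertions reduce to a single statement about stable quadratic modules: the natural comparison morphism
\[
\D{D}_*(\C{E})\To \DD{D}_*(C^b(\C{E}))=\D{D}_*(\ho S_\bullet(C^b(\C{E})))
\]
is a weak equivalence. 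Indeed, for any \Picardcategory\ $\C P$, a weak equivalence of stable quadratic modules induces an equivalence of the corresponding $\hom$-categories, hence (via Theorem \ref{main1} and Corollary \ref{pg0sq}) an equivalence between $\operatorname{Det}(\C{E},\C P)$ and the category of derived determinant functors on $C^b(\C{E})$; naturality then gives exactly the ``factors in an essentially unique way'' statement.

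First I would identify the composite. The inclusion $\C{E}\hookrightarrow C^b(\C{E})$ of complexes concentrated in degree $0$ preserves cofibrations, weak equivalences and the distinguished zero object (this is why the particular Waldhausen structure on $C^b(\C{E})$ was fixed in the Example following Definition \ref{detwald}), so by Remark \ref{lulu} it induces a morphism $\D{D}_*(\C{E})\r\D{D}_*(C^b(\C{E}))$, and composing with the canonical $\D{D}_*(C^b(\C{E}))\r\DD{D}_*(C^b(\C{E}))$ of Example \ref{lasrelaciones}(1) gives the morphism above. The Gillet--Waldhausen theorem (already invoked in Example \ref{exexact}) says that $\C{E}\r C^b(\C{E})$ induces a homotopy equivalence of $K$-theory spectra, $K(\C{E})\st{\sim}\r K(C^b(\C{E}))$. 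On the other hand, since $C^b(\C{E})$ has cylinders and a saturated class of weak equivalences, the result of \cite{malt} quoted in Section \ref{losejemplillos} gives that $\D{D}_*(C^b(\C{E}))\r\DD{D}_*(C^b(\C{E}))$ is an isomorphism; equivalently $DK(C^b(\C{E}))\st{\sim}\r K(C^b(\C{E}))$ at the level of $1$-types.

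The heart of the argument is then purely a $1$-type comparison. By Corollary \ref{main2} we have $\D{D}_*(\C{E})\cong\lambda_0 K(\C{E})$ and $\DD{D}_*(C^b(\C{E}))\cong\lambda_0 DK(C^b(\C{E}))$ in $\ho\mathbf{squad}$; since $\lambda_0$ is an equivalence on $1$-types (Lemma \ref{equi}) and both $K(\C{E})\r K(C^b(\C{E}))$ and $DK(C^b(\C{E}))\r K(C^b(\C{E}))$ are equivalences, the induced map $\D{D}_*(\C{E})\r\DD{D}_*(C^b(\C{E}))$ is a weak equivalence, provided one checks that the isomorphisms of Corollary \ref{main2} are natural in the relevant morphisms of simplicial categories with weak equivalences — which is precisely the naturality statement recorded just after Corollary \ref{main2} (the $1$-stage of each comparison morphism of spectra is the geometric realization of the corresponding simplicial functor between subcategories of weak equivalences). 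This gives what we want: $\hom_c^\otimes(\DD{D}_*(C^b(\C{E})),\C P)\simeq\hom_c^\otimes(\D{D}_*(\C{E}),\C P)$ naturally, i.e.\ the category of derived determinant functors on $C^b(\C{E})$ with values in $\C P$ is equivalent to the category of determinant functors on $\C{E}$, the equivalence being restriction along $\C{E}\hookrightarrow C^b(\C{E})$. Unwinding the definition of ``universal'' and ``essentially unique factorization'' yields the statement of the corollary.

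The main obstacle I anticipate is not the abstract skeleton above but verifying compatibility of all the comparison maps: one must be sure that the weak equivalence $\D{D}_*(\C{E})\r\DD{D}_*(C^b(\C{E}))$ appearing in the $1$-type argument is literally the morphism induced by the geometric inclusion $\C{E}\subset C^b(\C{E})$ (so that restriction of determinant functors is the functor being inverted), and not merely some abstractly-existing weak equivalence. This is where one uses that the Gillet--Waldhausen equivalence, Garkusha's comparison $DK\to K$, and Keller's identification of $\mathbb{D}_{\C{E}}$ all fit into one commuting diagram of spectra whose $1$-stages are realizations of honest simplicial functors; once that commuting diagram is in place, everything else is formal. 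A secondary point to check is that the Waldhausen structure on $C^b(\C{E})$ used here (levelwise admissible monomorphisms as cofibrations) is the one for which both the inclusion of $\C{E}$ behaves well and the Gillet--Waldhausen theorem applies, which is exactly why that structure was singled out earlier in the paper.
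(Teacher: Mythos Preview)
Your argument is essentially the paper's, but you have misread the statement and added an unnecessary detour. The corollary asks about ordinary (Waldhausen) determinant functors on $C^b(\C{E})$, not derived ones; the derived version is the separate Corollary~\ref{wder}. Accordingly, the paper's proof is just: by Theorem~\ref{main1} and Corollary~\ref{main2} the question reduces to showing $\D{D}_*(\C{E})\to\D{D}_*(C^b(\C{E}))$ is a weak equivalence, and this is immediate from Gillet--Waldhausen since both sides compute $\lambda_0$ of the respective $K$-theory spectra. Your route factors this through $\DD{D}_*(C^b(\C{E}))$ and invokes \cite{malt} to close the triangle, which is correct but superfluous here---step~1 of your argument already \emph{is} the proof. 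Drop the derived layer and your write-up matches the paper's one-line justification exactly.
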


This follows from Theorems \ref{mainconcreto} and \ref{main2} and from the Gillet--Waldhausen theorem \cite{tckt}. A direct proof of this result can be found in \cite{dfec}.

\begin{cor}\label{wder}
Let $\C{W}$ be a Waldhausen category with cylinders and a saturated
class of weak equivalences, i.e.~weak equivalences are exactly those
maps in $\C{W}$ which become invertible in $\ho\C{W}$. Then any
determinant functor on $\C{W}$ factors through a derived determinant
functor in an essentially unique way.
\end{cor}

This follows from Theorem \ref{mainconcreto}  and \cite[Theorem 6.1]{malt}. 

\begin{rem}
If the class of weak equivalences in $\C{W}$ is not saturated then Weiss's Whitehead group $\operatorname{Wh}(\C{W})$ may not vanish \cite{hlwc}, and in this case the universal determinant functor $\det\colon\C{W}\r\D{D}_*(\C{W})$ need not factor through a derived determinant functor, compare \cite[Remark 6.3]{malt}.
\end{rem}
  

\subsection{Generators and (some) relations for $K_{1}$}\label{genrel}

Nenashev \cite{K1gr} considered pairs of short exact sequences over the same objects in an exact category $\C{E}$,
\begin{equation}\label{nenashev}
\xymatrix@R=5pt@C=10pt{
X\ar@<.7ex>@{>->}[r]^{f}\ar@<-.7ex>@{>->}[r]_{f'}&Y\ar@<.7ex>@{->>}[r]^{p}\ar@<-.7ex>@{->>}[r]_{p'}&C.}
\end{equation}
Such a pair yields an element in $K_{1}(\C{E})$. 
Nenashev proved that any element in $K_{1}(\C{E})$ is of this kind and computed a set of relations among them, associated to $3\times 3$ diagrams, yielding a presentation of $K_{1}(\C{E})$.

Vaknin \cite{k1tc} considered pairs of \exact\ triangles over the same objects in a triangulated category $\C{T}$,
\begin{equation}\label{vaknin}
\xymatrix{X\ar@<.5ex>[r]^f\ar@<-.5ex>[r]_{f'}&Y\ar@<.5ex>[r]^i\ar@<-.5ex>[r]_{i'}&Z\ar@<.5ex>[r]^q\ar@<-.5ex>[r]_{q'}&\Sigma X.}
\end{equation}
Using similar techniques, Vaknin \cite{k1tc} proved that any element in Neeman's $K_{1}(^{d}\!\C{T})$ is of this kind and computed a set of relations among them, extending Nenashev's, yielding a presentation of $K_{1}(^{d}\!\C{T})$, as in the exact case, see Proposition \ref{relaciontri}.

Muro and Tonks considered in \cite{k1wc} diagrams in a Waldhausen category $\C{W}$,
\begin{equation}\label{murotonks}
\xymatrix@R=5pt@C=10pt{&&\cone{f}\ar@{<-}[rd]^\sim&\\
X\ar@<.7ex>@{>->}[r]^{f}\ar@<-.7ex>@{>->}[r]_{f'}&B\ar@{->>}[ru]\ar@{->>}[rd]&&C,\\
&&\cone{f'}\ar@{<-}[ru]_\sim&}
\end{equation}
consisting of two cofiber sequences and two weak equivalences. 
They extended Nenashev's results, showing that any element in $K_{1}(\C{W})$ is of this kind and computing a set of relations among them generalizing Nenashev's. Some evidence was given for the conjecture that these relations  define a presentation of $K_{1}(\C{W})$.

In this section we indicate how these results extend to Breuning's $K_{1}(^b\!\C{T})$, Neeman's  $K_{1}(^{v}\!\C{T})$, and Maltsiniotis's~$K_{1}(^s\!\C{T}_\infty)$. 

Let $\filledsquare=b,d,v$ or $s$. Denote simply by $\C T$ a triangulated category, or a strongly triangulated category if $\filledsquare=s$. A \emph{$\filledsquare$-triangle} is a \exact\ triangle if $\filledsquare=b,d,s$ and a virtual triangle if $\filledsquare=v$. Given  a pair of $\filledsquare$-triangles \eqref{vaknin} we define
\begin{align*}
[X\mathop{\rightrightarrows}\limits^{f}_{f'}Y\mathop{\rightrightarrows}\limits^{i}_{i'}Z \mathop{\rightrightarrows}\limits^{q}_{q'}\Sigma X]&
=-[X\st{f}\r Y\st{i}\r Z\st{q}\r\Sigma X]+[X\st{f'}\r Y\st{i'}\r Z\st{q'}\r\Sigma X]\in \D{D}_*(^{\filledsquare}\C{T}).
\end{align*}
Notice that this element is in the kernel of $\partial$, i.e.~$$[X\mathop{\rightrightarrows}\limits^{f}_{f'}Y\mathop{\rightrightarrows}\limits^{i}_{i'}Z \mathop{\rightrightarrows}\limits^{q}_{q'}\Sigma X]\in\pi_1\D{D}_*(^{\filledsquare}\C{T})\cong
K_1(^{\filledsquare}\C{T}).$$
Moreover, a pair given by twice the same $\filledsquare$-triangle is zero $[X\mathop{\rightrightarrows}\limits^{f}_{f}Y\mathop{\rightrightarrows}\limits^{i}_{i}Z \mathop{\rightrightarrows}\limits^{q}_{q}\Sigma X]=0.$

\begin{thm}\label{otromaintriang}
Any element in $K_1(^{\filledsquare}\C{T})$ is represented by a pair of $\filledsquare$-triangles.
\end{thm}

This result follows from Theorem \ref{otromain}. 


\begin{defn}\label{27}
A \emph{$3\times3$ diagram} in $\C T$ is a diagram
\begin{equation}\label{444}
\xymatrix{
X'\ar[r]^-{f^{X}}\ar[d]_-{f'}&X\ar[r]^-{i^{X}}\ar[d]^-{f}&X''\ar[r]^-{q^{X}}\ar[d]^-{f''}&\Sigma X'\ar[d]^-{\Sigma f'}\\
Y'\ar[r]^-{f^{Y}}\ar[d]_-{i'}&Y\ar[r]^-{i^{Y}}\ar[d]^-{i}&Y''\ar[r]^-{q^{Y}}\ar[d]^-{i''}&\Sigma Y'\ar[d]^-{\Sigma i'}\\
Z'\ar[r]^-{f^{Z}}\ar[d]_-{q'}&Z\ar[r]^-{i^{Z}}\ar[d]^-{q}&Z''\ar[r]^-{q^{Z}}\ar[d]^-{q''}\ar@{}[rd]|*+<.15cm>[o][F-]{\scriptscriptstyle -1}&\Sigma Z'\ar[d]^-{-\Sigma q'}\\
\Sigma X'\ar[r]_-{\Sigma f^{X}}&\Sigma X\ar[r]_-{\Sigma i^{X}}&\Sigma X''\ar[r]_-{-\Sigma q^{X}\;}&\Sigma^{2}X'}
\end{equation}
where all squares commute except for the bottom right square, which is anticommutative, $(\Sigma q')q^{Z}+(\Sigma q^{X})q''=0$. Moreover, all rows and columns are $\filledsquare$-triangles. 

A \emph{$\filledsquare$-octahedron} is an ordinary octahedron if $\filledsquare=b$, a special octahedron if $\filledsquare=d$, a \exact\ octahedron if $\filledsquare=s$, and a virtual octahedron if $\filledsquare=v$, 

A $3\times3$ diagram is \emph{$\filledsquare$-coherent} if there exist four 
$\filledsquare$-octahedra 
\begin{equation*}
\quad\octahedron{objA=X',objB=X,objC=W,objD=X'',objE=X''\oplus Z',objF=Z',
            arAB=f^{X},arBC=\bar{f}',arAC=\delta
,arDE=\binom{1}{0},arEF={\scriptscriptstyle (0,1)\;},arCE=\;\binom{\bar{i}^{X}}{\bar{i}'},arCF=\bar{i}',arBD=i^{X},arDA=q^{X},arFD=0,
            arFB={\hspace{-3pt}\scriptstyle(\Sigma f^{X})q'},arEA={\scriptscriptstyle (q^{X},q')\hspace{-5pt}}}
                        \qquad\qquad\qquad
\octahedron{objA=X',objB=Y',objC=W,objD=Z',objE=X''\oplus Z',objF=X'',
            arAB=f',arBC=\bar{f}^{X},arAC=\delta
,arDE=\binom{0}{1},arEF={\scriptscriptstyle (1,0)\;},arCE=\;\binom{\bar{i}^{X}}{\bar{i}'},arCF=\bar{i}^{X},arBD=i',arDA=q',arFD=0,
            arFB={\hspace{-3pt}(\Sigma f')q^{X}},arEA={\scriptscriptstyle (q^{X},q')\hspace{-5pt}}}
\end{equation*}
$$\quad\octahedron{objA=X,objB=W,objC={Y},objD=Z',objE=Z,objF=Z'',
            arAB=\bar{f}',arBC=\varepsilon,arAC=f,arDE=f_{Z},arEF=i^{Z},arCE=i,arCF=i^{Z}i,arBD=\bar{i}',arDA=(\Sigma f^{X})q',arFD=q^{Z},
            arFB=\kappa,arEA=q}
                        \qquad\qquad\qquad
\octahedron{objA=Y',objB=W,objC=Y,objD=X'',objE=Y'',objF=Z'',
            arAB=\bar{f}^{X},arBC=\varepsilon',arAC=f_{Y},arDE=f'',arEF=i'',arCE=i^{Y},arCF=i''i^{Y},arBD=\bar{i}^{X},arDA=(\Sigma f')q^{X},arFD=q'',
            arFB=\kappa',arEA=q^{Y}}
$$
such that in the two last octahedra 
$\varepsilon=\varepsilon'$ and $\kappa=\kappa'$. 
\end{defn}

This kind of diagram has previously appeared in \cite{k1tc,largo}.

\begin{rem}\label{227}
As Vaknin pointed out in \cite[Remark 5.2]{k1tc}, 
if we start with a $3\times3$ diagram of \exact\ triangles as in \eqref{444} it is always possible to construct four  octahedra as above, but, in general, one cannot guarantee $\varepsilon=\varepsilon'$ or $\kappa=\kappa'$, i.e.~the third and fourth octahedra may contain different \exact\ triangle completions of $i^{Z}i=i''i^{Y}\colon Y\r Z''$. The procedure is as follows:
\begin{enumerate}
\item Extend $(q^X,q')$ to an exact triangle. This produces $\bar{i}^{X}$, $\bar{i}'$ and $\delta$. All morphisms in the first octahedron are now defined, except for $\bar f'$. The octahedral axiom yields a morphism $\bar f'$ such that the resulting diagram is an octahedron. This octahedron is always special since we have the following triangle isomorphisms where the bottom triangle is \exact, being the direct sum of two \exact\ triangles,
$$\xymatrix@C=50pt{
X\ar[r]^-{\binom{\bar f'}{-i^X}}\ar@{=}[d]&
W\oplus X''\ar[r]^-{\left(\begin{smallmatrix}
                    \bar{i}^{X}&1\\\bar{i}'&0
                   \end{smallmatrix}\right)
}
\ar[d]^-{\left(\begin{smallmatrix}
                    \bar{i}^{X}&1\\1&0
                   \end{smallmatrix}\right)}_-{\cong}
&
X''\oplus Z'\ar[r]^-{(0,(\Sigma f^{X})q')}
\ar@{=}[d]
&\Sigma X\ar@{=}[d]\\
X\ar[r]_-{0\oplus{\bar f'}}&
X''\oplus W\ar[r]_-{1\oplus\bar{i}'}&
X''\oplus Z'\ar[r]_-{0\oplus (\Sigma f^{X})q'}
&\Sigma X
}$$

$$\xymatrix@C=50pt{
X''\oplus Z'\ar[r]^-{\left(\begin{smallmatrix}
                     q^X&q'\\0&-1
                    \end{smallmatrix}\right)
}\ar@{=}[d]&
\Sigma X'\oplus Z'\ar[r]^-{(\Sigma f^X,(\Sigma f^X)q')}
\ar[d]^-{\left(\begin{smallmatrix}
                    1&q'\\0&-1
                   \end{smallmatrix}\right)}_-{\cong}
&
\Sigma X\ar[r]^-{\binom{\Sigma i^X}{0}}\ar@{=}[d]&
\Sigma X''\oplus \Sigma Z'\ar@{=}[d]\\
X''\oplus Z'\ar[r]_-{q^X\oplus1}&
\Sigma X'\oplus Z'\ar[r]_-{\Sigma f^X\oplus 0}&
\Sigma X\ar[r]_-{{\Sigma i^X}\oplus{0}}&
\Sigma X''\oplus \Sigma Z'
}$$

\item All morphisms in the second octahedron are defined, except for $\bar f^{X}$. Apply the octahedral axiom to obtain  a morphism $\bar f^{X}$ such that the resulting diagram is an octahedron. This octahedron is also always special since we have the following triangle isomorphisms with \exact\ bottom part,
$$\xymatrix@C=50pt{
Y'\ar[r]^-{\binom{\bar f^X}{-i'}}\ar@{=}[d]&
W\oplus Z'\ar[r]^-{\left(\begin{smallmatrix}
                   \bar{i}^X&0\\\bar{i}'&1
                  \end{smallmatrix}\right)
}
\ar[d]^-{\left(\begin{smallmatrix}
                    1&0\\\bar{i}'&1
                   \end{smallmatrix}\right)}_-{\cong}
&
X''\oplus Z'\ar[r]^-{((\Sigma f')q^X,0)}\ar@{=}[d]&
\Sigma Y'\ar@{=}[d]\\
Y'\ar[r]_-{{\bar f^X}\oplus 0}&
W\oplus Z'\ar[r]_-{\bar{i}^X\oplus1}&
X''\oplus Z'\ar[r]_-{(\Sigma f')q^X\oplus0}&
\Sigma Y'
}$$

$$\xymatrix@C=50pt{
X''\oplus Z'\ar[r]^-{\left(\begin{smallmatrix}
                   q^X&q'\\-1&0
                  \end{smallmatrix}\right)
}\ar@{=}[d]&
\Sigma X'\oplus X''\ar[r]^-{(\Sigma f',(\Sigma f')q^X)}\ar[d]^-{\left(\begin{smallmatrix}
                    0&-1\\1&q^{X}
                   \end{smallmatrix}\right)}_-{\cong}&
\Sigma Y'\ar[r]^-{\binom{0}{\Sigma i'}}\ar@{=}[d]&
\Sigma X''\oplus \Sigma Z'\ar@{=}[d]\\
X''\oplus Z'\ar[r]_-{1\oplus q'}&
X''\oplus\Sigma  X'\ar[r]_-{0\oplus\Sigma f'}&
\Sigma Y'\ar[r]_{{0}\oplus{\Sigma i'}}&
\Sigma X''\oplus \Sigma Z'
}$$

\item The morphisms $\varepsilon$ and $\kappa$ in the third octahedron are obtained by applying the octahedral axiom, the rest has been previously defined. Similarly $\varepsilon'$ and $\kappa'$ in the fourth octahedron. Nothing guarantees that these octahedra are special, nor that $\varepsilon=\varepsilon'$ and $\kappa=\kappa'$. 
\end{enumerate}

\end{rem}


%

The following result extends Vaknin's relations to $\filledsquare=b,s,v$, but we do not prove their sufficiency in these contexts.

\begin{prop}\label{relaciontri}
Suppose we have two $\filledsquare$-coherent  $3\times3$ diagrams over the same objects, 
$j=1,2$,
\begin{equation}\label{2444}
\xymatrix{
X'\ar[r]^-{f^{X}_{j}}\ar[d]_-{f'_{j}}&X\ar[r]^-{i^{X}_{j}}\ar[d]^-{f_{j}}&X''\ar[r]^-{q^{X}_{j}}\ar[d]^-{f''_{j}}&\Sigma X'\ar[d]^-{\Sigma f'_{j}}\\
Y'\ar[r]^-{f^{Y}_{j}}\ar[d]_-{i'_{j}}&Y\ar[r]^-{i^{Y}_{j}}\ar[d]^-{i^{j}}&Y''\ar[r]^-{q^{Y}_{j}}\ar[d]^-{i''_{j}}&\Sigma Y'\ar[d]^-{\Sigma i'_{j}}\\
Z'\ar[r]^-{f^{Z}_{j}}\ar[d]_-{q'_{j}}&Z\ar[r]^-{i^{Z}_{j}}\ar[d]^-{q^{j}}&Z''\ar[r]^-{q^{Z}_{j}}\ar[d]^-{q''_{j}}\ar@{}[rd]|*+<.15cm>[o][F-]{\scriptscriptstyle -1}&\Sigma Z'\ar[d]^-{-\Sigma q'_{j}}\\
\Sigma X'\ar[r]_-{\Sigma f^{X}_{j}}&\Sigma X\ar[r]_-{\Sigma i^{X}_{j}}&\Sigma X''\ar[r]_-{-\Sigma q^{X}_{j}}&\Sigma^{2}X'}
\end{equation}
Then the following equation holds in  $K_{1}(^\filledsquare\!\C{T})$,
\begin{align*}
\label{rel6}&\hspace{-20pt}[X'\mathop{\rightrightarrows}\limits^{f^{X}_{1}}_{f^{X}_{2}}X\mathop{\rightrightarrows}\limits^{i^{X}_{1}}_{i^{X}_{2}}X'' \mathop{\rightrightarrows}\limits^{q^{X}_{1}}_{q^{X}_{2}}\Sigma X']
-[Y'\mathop{\rightrightarrows}\limits^{f^{Y}_{1}}_{f^{Y}_{2}}Y\mathop{\rightrightarrows}\limits^{i^{Y}_{1}}_{i^{Y}_{2}}Y'' \mathop{\rightrightarrows}\limits^{q^{Y}_{1}}_{q^{Y}_{2}}\Sigma Y']
+[Z'\mathop{\rightrightarrows}\limits^{f^{Z}_{1}}_{f^{Z}_{2}}Z\mathop{\rightrightarrows}\limits^{i^{Z}_{1}}_{i^{Z}_{2}}Z'' \mathop{\rightrightarrows}\limits^{q^{Z}_{1}}_{q^{Z}_{2}}\Sigma Z']\\
\nonumber ={}&
[X'\mathop{\rightrightarrows}\limits^{f_{1}'}_{f_{2}'}Y'\mathop{\rightrightarrows}\limits^{i_{1}'}_{i_{2}'}Z' \mathop{\rightrightarrows}\limits^{q_{1}'}_{q_{2}'}\Sigma X']
-[X\mathop{\rightrightarrows}\limits^{f_{1}}_{f_{2}}Y\mathop{\rightrightarrows}\limits^{i_{1}}_{i_{2}}Z \mathop{\rightrightarrows}\limits^{q_{1}}_{q_{2}}\Sigma X]
+[X''\mathop{\rightrightarrows}\limits^{f_{1}''}_{f_{2}''}Y''\mathop{\rightrightarrows}\limits^{i_{1}''}_{i_{2}''}Z'' \mathop{\rightrightarrows}\limits^{q_{1}''}_{q_{2}''}\Sigma X''].
\end{align*}
\end{prop}

This result follows from Corollary \ref{hartura}.


\subsection{On additivity 
for low-dimensional $K$-theory of triangulated categories}\label{adloc}
In this section we prove an additivity theorem for low-dimensional $K$-theories of (strongly) triangulated categories.


\begin{thm}[Additivity]
Let $F,G,H\colon \C{T}\r \C{T}'$ be exact functors between (strongly) triangulated categories. Denote $\filledsquare= b,d,$ or $v$ in the triangulated case and $\filledsquare =s$ in the strongly triangulated case. Suppose we have a natural $\filledsquare$-triangle,
\begin{equation}\label{paraad1}
F(X)\st{f(X)}\To  G(X)\st{i(X)}\To  H(X)\st{q(X)}\To \Sigma F(X),
\end{equation}
such that for any $\filledsquare$-triangle $X\st{f}\r Y\st{i}\r Z\st{q}\r\Sigma X$, the $3\times 3$ diagram 
\begin{equation}\label{paraad2}
\xymatrix{F(X)\ar[r]^-{f(X)}\ar[d]_-{F(f)}&G(X)\ar[r]^-{i(X)}\ar[d]^-{G(f)}&H(X)\ar[r]^-{q(X)}\ar[d]^-{H(f)}&\Sigma F(X)\ar[d]^-{\Sigma F(f)}\\
F(Y)\ar[r]^-{f(Y)}\ar[d]_-{F(i)}&G(Y)\ar[r]^-{i(Y)}\ar[d]^-{G(i)}&H(Y)\ar[r]^-{q(Y)}\ar[d]^-{H(q)}&\Sigma F(Y)\ar[d]^-{\Sigma F(q)}\\
F(Z)\ar[r]^-{f(Z)}\ar[d]_-{F(q)}&G(Z)\ar[r]^-{i(Z)}\ar[d]^-{G(q)}&H(Z)\ar[r]^-{q(Z)}\ar[d]^-{H(q)}\ar@{}[rd]|*+<.15cm>[o][F-]{\scriptscriptstyle -1}&\Sigma F(Z)\ar[d]^{\Sigma F(q)}\\
\Sigma F(X)\ar[r]_-{\Sigma f(X)}&\Sigma G(X)\ar[r]_-{\Sigma i(X)}&\Sigma H(X)\ar[r]_-{\Sigma q(X)}&\Sigma^2 F(X)}
\end{equation}
is $\filledsquare$-coherent. Then 
\begin{align*}
K_{i}(^{\filledsquare}F)+K_{i}(^{\filledsquare}H)&=K_{i}(^{\filledsquare}G)\colon K_{i}(^{\filledsquare}\C{T})\To  K_{i}(^{\filledsquare}\C{T}'),\qquad i=0,1.
\end{align*}
\end{thm}

\begin{proof}
For $K_{0}$ the result follows from the following equation,
\begin{align*}
\partial[F(X)\st{f(X)}\To  G(X)\st{i(X)}\To  H(X)\st{q(X)}\To \Sigma F(X)]&=-[G(X)]+[H(X)]+[F(X)].
\end{align*}

Any element in $K_{1}$ is represented by a pair of $\filledsquare$-triangles, $$[X\mathop{\rightrightarrows}\limits^{f}_{f'}Y\mathop{\rightrightarrows}\limits^{i}_{i'}Z \mathop{\rightrightarrows}\limits^{q}_{q'}\Sigma X],$$ see Theorem \ref{otromaintriang}. If we apply Proposition  \ref{relaciontri} to the induced $\filledsquare$-coherent $3\times 3$ diagrams \eqref{paraad2} we obtain the following equation in $K_{1}$,
\begin{align*}
0={}&[F(X)\!\!\mathop{\rightrightarrows}\limits^{f(X)}_{f(X)}\!\!G(X)\!\!\mathop{\rightrightarrows}\limits^{i(X)}_{i(X)}H(X)\!\! \mathop{\rightrightarrows}\limits^{q(X)}_{q(X)}\!\!\Sigma F(X)]-[F(Y)\!\!\mathop{\rightrightarrows}\limits^{f(Y)}_{f(Y)}\!\!G(Y)\!\!\mathop{\rightrightarrows}\limits^{i(Y)}_{i(Y)}\!\!H(Y) \!\!\mathop{\rightrightarrows}\limits^{q(Y)}_{q(Y)}\!\!\Sigma F(Y)]\\
&+[F(Z)\!\!\mathop{\rightrightarrows}\limits^{f(Z)}_{f(Z)}\!\!G(Z)\!\!\mathop{\rightrightarrows}\limits^{i(Z)}_{i(Z)}\!\!H(Z) \!\!\mathop{\rightrightarrows}\limits^{q(Z)}_{q(Z)}\!\!\Sigma F(Z)]\\
={}&[F(X)\!\!\mathop{\rightrightarrows}\limits^{F(f)}_{G(f')}\!\!F(Y)\!\!\mathop{\rightrightarrows}\limits^{F(i)}_{F(i')}\!\!F(Z) \!\!\mathop{\rightrightarrows}\limits^{F(q)}_{F(q')}\!\!\Sigma F(X)]-[G(X)\!\!\mathop{\rightrightarrows}\limits^{G(f)}_{G(f')}\!\!G(Y)\!\!\mathop{\rightrightarrows}\limits^{G(i)}_{G(i')}\!\!G(Z) \!\!\mathop{\rightrightarrows}\limits^{G(q)}_{G(q')}\!\!\Sigma G(X)]\\
&+[H(X)\!\!\mathop{\rightrightarrows}\limits^{H(f)}_{G(f')}\!\!H(Y)\!\!\mathop{\rightrightarrows}\limits^{H(i)}_{H(i')}\!\!H(Z) \!\!\mathop{\rightrightarrows}\limits^{H(q)}_{H(q')}\!\!\Sigma H(X)],
\end{align*}
hence we are done.
\end{proof}

Notice that this additivity theorem does not contradict \cite[Remark 2.3]{kttc}.

\begin{rem}
The hypotheses are satisfied if the natural \exact\ triangle has a model, e.g.~if $\C{T}$ and $\C{T}'$ are categories of perfect complexes over two rings $R$ and $R'$, and the exact functors $F$, $G$ and $H$ are given by the derived tensor product with perfect complexes of $R'$-$R$-bimodules $F_{*}$, $G_{*}$ and $H_{*}$  fitting into a $\filledsquare$-triangle $F_{*}\r G_{*}\r H_{*}\r \Sigma F_{*}$.
\end{rem}

\subsection{Low-dimensional $K$-theory of a triangulated category with a $t$-struc\-ture}\label{heart} 
\label{gradun}\label{forgrad}

A \emph{$t$-structure} in a triangulated category $\C T$ consists essentially of a full abelian subcategory $\C A\subset \C T$, called \emph{core} or \emph{heart}, and  a splitting of the inclusion given by a cohomological functor
$$H^{0}\colon\C{T}\To \C{A}.$$
Some axioms modelled on the canonical exampe $\C T = D^{b}(\C A)$ must be satisfied, see \cite[IV.4.2 and IV.4.11]{mha}. The \emph{cohomology} of an object $X$ in $\C T$ is defined as
$$H^{n}X=H^{0}\Sigma^{n}X,\quad n\in\mathbb{Z}.$$

In this section we consider $t$-structures satisfying the following additional properties, which hold in the canonical example but which are often relaxed. Our  $t$-structures will be \emph{non-degenerate}, i.e.~$H^{*}X=0$ iff $X=0$, and  \emph{bounded}, i.e.~the graded object $H^{*}X$ must be bounded for any $X$.

Any object $X$ in $\C T$ fits into a functorial \exact\ triangle,
$$\Delta_{X}\colon\quad X_{\leq -1}\To  X\To  X_{\geq 0}\To  \Sigma X_{\leq -1},$$
where, as the notation suggests, the homology of $X_{\leq -1}$ (resp. $X_{\geq 0}$) is concentrated in negative (resp. non-negative) degrees. In particular, the first (resp. second) arrow induces an isomorphism in $H^{n}$ for all $n<0$ (resp. $n\geq 0$). 

An object $X$ in $\C T$ is \emph{connective} if $H^{n}X=0$ for $n>0$. For a connective object $X$, the \exact\ triangle $\Delta_{X}$ looks like
$$\Delta_{X}\colon \quad X_{\leq -1}\To  X\To  H^{0}X\To  \Sigma X_{\leq -1}.$$
Since the our $t$-structures are bounded, all objects become connective after suspending a finite number of times. 

A short exact sequence in $\C A$,
$$\Delta\colon\; A\st{i}\into B\st{q}\onto C,$$
extends uniquely to a \exact\ triangle, that we denote in the same way,
$$\Delta\colon\; A\st{i}\To  B\st{q}\To  C\st{\epsilon}\To \Sigma A.$$
This defines a natural isomorphism
$$\ext_{\C A}^1(C,A)\cong\C{T}(C,\Sigma A),$$
and moreover, a morphism of stable quadratic modules, $\filledsquare= b,d,v$,
$$j\colon \D{D}_{*}(\C A)\To  \D{D}_{*}({}^\filledsquare\C{T}).$$

\begin{lem}\label{inyectivo}
The stable quadratic module morphism $j$ is levelwise injective.
\end{lem}

\begin{proof}
The morphism is obviously injective in degree $0$, since it is given by an inclusion of the basis of $\D{D}_{0}(\C A)$ (the non-trivial objects of $\C A$) into the basis of $\D{D}_{0}({}^\filledsquare\C{T})$ (the non-trivial objects of $\C T$). An easy diagram chase shows that $j$ is  injective in degree $1$ if and only if $\pi_{1}j$ is injective. The latter is true. Actually, the comparison morphism $K_{n}(\C A)\r K_{n}(^{\filledsquare}\C T)$ is split injective for any $n\geq 0$, $\filledsquare =d,v$, see \cite[Theorem 50 (i)]{kttcneeman},  and $\pi_{1}j$ is the case $n=1$, see Theorem \ref{pepillo}. The case $\filledsquare =b$ then follows from the factorization \eqref{dv}. Alternatively, the cases $\filledsquare= b,d$ also follow from \cite[Corollary 5.3]{dftc}. It is also possible to give a unified direct proof, not invoking Neeman's or Breuning's results, see Remark \ref{notinv} below.
\end{proof}

For any object $X$ in $\C T$, consider the \exact\ triangle
$$\Gamma_{X}\colon\quad X\r 0\r \Sigma X\st{1}\r\Sigma X.$$

\begin{prop}\label{bastan}
The stable quadratic module $\D{D}_{*}({}^\filledsquare\C{T})$, $\filledsquare= b,d,v$, is generated by the image of the generators $(G1)$ and $(G3)$ of $\D{D}_{*}(\C{A})$ by $j$ together with the generators $[\Gamma_{X}]$ and $[\Delta_{Y}]$, where $X$  runs over all objects of $\C T$ and $Y$ runs over all connective objects.
\end{prop}

This follows from Lemmas \ref{facil},  \ref{corte1} and \ref{too2} below, by induction on the interval where the cohomology of a given object  (or the cohomology of the three objects in a given $\filledsquare$-triangle) is concentrated.

The following lemma is simply an application of (R2).

\begin{lem}\label{facil}
Given objects $X$ and $Y$ in $\C T$, with $Y$ connective,  the following formulas hold in $\D{D}_{0}({}^\filledsquare\C{T})$, $\filledsquare= b,d,v$:
\begin{align*}
\partial[\Gamma_{X}]&=[\Sigma X]+[X],&
[Y]+\partial[\Delta_{Y}]&=j[H^{0}Y]+[Y_{\leq -1}]. 
\end{align*}
\end{lem}

\begin{lem}
Let $\Delta$ in \eqref{triangle} be a $\filledsquare$-triangle where $X$, $Y$ and $C^{f}$ are connective, $\filledsquare= b,d,v$. There is a  $\filledsquare$-octahedron
$$\qquad\begin{array}{c}
\octahedronles{objA=X,objB=Y^{t} 
,objC=Y ,objD=C^{f}_{\leq -1},objE=C^{f} ,objF=H^{0}C^{f},
            arAB=,arBC=,arAC=f,arDE=,arEF=, arDA=, arBD=, arCE=i, arEA=q, arCF=,arFB=
            }
            \end{array}$$
which is unique up to a unique isomorphism.
\end{lem}

\begin{proof}
Everything is proven in  \cite[Lemma 3.9 and Theorem 3.13]{vt}, except that the octahedron is special for $\filledsquare=d$. This follows from Remark \ref{speziale}. 
\end{proof}

The \exact\ triangle on the bottom of the octahedron in the previous lemma,
\begin{equation}\label{tronquito}
X\To  Y^{t}\To  C^{f}_{\leq -1}\To \Sigma X,
\end{equation}
is known as the \emph{truncation} of $\Delta$.

\begin{lem}\label{corte1}
In the conditions of the previous lemma,  the following formula holds in $\D{D}_{1}({}^\filledsquare\C{T})$, $\filledsquare= b,d,v$:
\begin{align*} 
&[\Delta_{Y}]+j[\ker H^{0}i^{f}\into H^{0}Y\onto H^{0}C^{f}]^{[Y_{\leq -1}]}+[X\r Y^{t}\r C^{f}_{\leq -1}\r\Sigma X]\\
&=[\Delta_{Y^{t}}]+[\Delta]+[\Delta_{C^{f}}]^{[X]}.
\end{align*}
\end{lem}

\begin{proof}
Applying (R8) to the $\filledsquare$-octahedron in the previous lemma we obtain
\begin{align*}
[Y^{t}\r Y\r H^{0}C^{f}\r\Sigma Y^{t}]+[X\r Y^{t}\r  C^{f}_{\geq 1}\r \Sigma X]&=[\Delta]+[\Delta_{C^{f}}]^{[X]}.
\end{align*}
By connectivity reasons, the morphism $Y_{\geq 1}\r Y$ factors uniquely through $Y^{t}\r Y$. The octahedral axiom yields an octahedron
$$\qquad\begin{array}{c}
\octahedronles{objA=Y_{\leq -1},objB=Y^{t} ,objC=Y ,objD=\ker H^{0}i^{f},objE=H^{0}Y ,objF=H^{0}C^{f},
            arEF=H^{0}i^{f}\;
            }
            \end{array}$$
whose byproduct is necessarily the \exact\ triangle associated to the short exact sequence $\ker H^{0}i^{f}\into H^{0}Y\onto H^{0}C^{f}$, since this short exact sequence is obtained by taking $H^{0}$ on $Y^{t}\r Y\r H^{0}C^{f}\r\Sigma Y^{t}$. This ochahedron is special by Remark \ref{speziale}. Hence, by (R8), 
\begin{align*}
[\Delta_{Y^{t}}]+[Y^{t}\r Y\r H^{0}C^{f}\r\Sigma Y^{t}]&=[\Delta_{Y}]+j[\ker H^{0}i^{f}\into H^{0}Y\onto H^{0}C^{f}]^{[Y_{\leq -1}]}.
\end{align*}
The statement follows from the combination of the two previous formulas.
\end{proof}

The \emph{translation} of a $\filledsquare$-triangle $\Delta$ as in \eqref{triangle} is the $\filledsquare$-triangle
\begin{equation}\label{trasladalo}
\Delta_{\tr}\colon\quad Y\st{i^{f}}\To  C^{f}\st{-q^{f}}\To  \Sigma X\st{\Sigma f}\To \Sigma Y.
\end{equation}
Usually, the sign is placed in the third arrow, but both triangles are isomorphic. Actually, we could also place the sign in the first arrow.

\begin{lem}\label{too2}
Given a $\filledsquare$-triangle $\Delta$ \eqref{triangle}, the following formula holds in $\D{D}_{1}({}^\filledsquare\C{T})$:
\begin{align*}
[\Delta_{\tr}]+[\Delta]={}&[\Gamma_{X}]+\grupo{[\cone{f}],[\Sigma X]}.
\end{align*}
\end{lem}

\begin{proof}
Consider the following $\filledsquare$-octahedron,
$$\octahedron{objA=X,objB=Y,objC=\cone{f} ,objD=\cone{f},objE=\cone{f}\oplus\Sigma X,objF=\Sigma X,
            arAB=f,arBC=i ,arAC=0,arDE=\binom{1}{0},arEF={\scriptscriptstyle (0,1)\;},
            arBD=i ,arDA=q ,arCF=-q ,arFD=0,arFB=\;\Sigma f,arEA={\scriptscriptstyle (q ,1)\;},arCE=\;\binom{\;\;\,1}{-q}}
$$
This octahedron is indeed special for $\filledsquare=d$, since we have the following isomorphisms where the lower rows are (direct sums of) \exact\ triangles, 
$$\xymatrix{
Y\ar[r]^-{\binom{\;\;\,i}{-i}}\ar@{=}[d]& 
\cone{f}\oplus\cone{f}\ar[r]^{
\bigl( \begin{smallmatrix}
\;\;\,1&1\\ -q&0
\end{smallmatrix} \bigr)
}
\ar[d]_{
\bigl( \begin{smallmatrix}
1&1\\ 1&0
\end{smallmatrix} \bigr)
}^{\cong}&
\cone{f}\oplus\Sigma X\ar[r]^-{\scriptscriptstyle (0,\Sigma f)}\ar@{=}[d]&\Sigma Y\ar@{=}[d]\\
0\oplus Y\ar[r]_-{{0}\oplus{i}}& \cone{f}\oplus\cone{f}\ar[r]_{
1\oplus(-q)
}&
\cone{f}\oplus\Sigma X\ar[r]_-{\scriptscriptstyle 0\oplus\Sigma f}&\Sigma (0\oplus Y)}$$

$$\xymatrix{
\cone{f}\oplus\Sigma X\ar[r]^-{
\bigl( \begin{smallmatrix}
q&\;\;\,1\\ 0&-1
\end{smallmatrix} \bigr)
}\ar@{=}[d]&
\Sigma X\oplus\Sigma X\ar[r]
\ar[r]^-{\scriptscriptstyle
({\Sigma f},{\Sigma f})
}\ar[d]_-{\bigl( \begin{smallmatrix}
1&\;\;\,1\\ 0&-1
\end{smallmatrix} \bigr)}^{\cong}&
\Sigma Y\ar[r]^-{\binom{\Sigma i}{0}}\ar@{=}[d]& 
\Sigma \cone{f}\oplus\Sigma^{2} X\ar@{=}[d]\\
\cone{f}\oplus\Sigma X\ar[r]_-{
q\oplus1
}&
\Sigma X\oplus\Sigma X\ar[r]
\ar[r]_-{\scriptscriptstyle
{\Sigma f}\oplus0
}&
\Sigma (Y\oplus 0)\ar[r]_-{{\Sigma i}\oplus{0}}& 
\Sigma \cone{f}\oplus\Sigma^{2} X}$$
Relation (R8) yields the following equation,
\begin{align}\label{primen}
[\Delta_{\tr}]+[\Delta]={}&[X\st{0}\r C^{f}\st{\binom{1}{-q}}\To  C^{f}\oplus\Sigma X\st{(q,1)}\To \Sigma X]\\
\nonumber &+[C^{f}\st{\binom{1}{0}}\To  C^{f}\oplus\Sigma X\st{(0,1)}\To  \Sigma X\st{0}\r\Sigma C^{f}].
\end{align}

Applying (R7) to the isomorphisms
$$\begin{array}{c}\xymatrix{X\ar[r]^-{0}\ar@{=}[d]&\cone{f}\ar[r]^-{\binom{\;\;\,1}{-q}}\ar@{=}[d]&\cone{f}\oplus\Sigma X\ar[r]^-{\scriptscriptstyle (q ,1)}\ar[d]^-{
\bigl( \begin{smallmatrix}
1&0\\ q&1
\end{smallmatrix} \bigr)
}_{\cong}&\Sigma X\ar@{=}[d]\\
X\ar[r]^-{0}&\cone{f}\ar[r]^-{\binom{1}{0}}&\cone{f}\oplus\Sigma X\ar[r]^-{\scriptscriptstyle (0 ,1)}&\Sigma X}\end{array}$$
$$\begin{array}{c}\xymatrix{\Sigma X\ar[r]^-{\binom{0}{1}}\ar@{=}[d]&\cone{f}\oplus\Sigma X\ar[r]^-{\scriptscriptstyle (1 ,0)}\ar[d]^-{\bigl( \begin{smallmatrix}
1&0\\ q&1
\end{smallmatrix} \bigr)}_{\cong}&\cone{f}\ar[r]^-{0}\ar@{=}[d]&\Sigma^{2} X\ar@{=}[d]\\
\Sigma X\ar[r]^-{\binom{0}{1}}&\cone{f}\oplus\Sigma X\ar[r]^-{\scriptscriptstyle (1 ,0)}&\cone{f}\ar[r]^-{0}&\Sigma^{2} X}
\end{array}$$
we obtain,
\begin{align*}
[X\st{0}\r C^{f}\st{\binom{1}{0}}\To  C^{f}\oplus\Sigma X\st{(0,1)}\To \Sigma X]+
\left[\left(\begin{smallmatrix}
1&0\\q&1
\end{smallmatrix}\right)
\right]&=[X\st{0}\r C^{f}\st{\binom{1}{-q}}\To  C^{f}\oplus\Sigma X\st{(q,1)}\To \Sigma X],\\
\left[\left(\begin{smallmatrix}
1&0\\q&1
\end{smallmatrix}\right)
\right]+[\Sigma X\st{\binom{0}{1}}\r C^{f}\oplus\Sigma X\st{(1,0)}\To  C^{f}\st{0}\r\Sigma X]
&=[\Sigma X\st{\binom{0}{1}}\r C^{f}\oplus\Sigma X\st{(1,0)}\To  C^{f}\st{0}\r\Sigma X].
\end{align*}
In particular,
\begin{align}
\nonumber\left[\left(\begin{smallmatrix}
1&0\\q&1
\end{smallmatrix}\right)
\right]
&=0,\\
\label{segun}[X\st{0}\r C^{f}\st{\binom{1}{0}}\To  C^{f}\oplus\Sigma X\st{(0,1)}\To \Sigma X]&
=[X\st{0}\r C^{f}\st{\binom{1}{-q}}\To  C^{f}\oplus\Sigma X\st{(q,1)}\To \Sigma X].
\end{align}

Applying (R8) to the special octahedron
$$\octahedron{objA=X,objB=0,objC=\cone{f} ,objD=\Sigma X,objE=\cone{f}\oplus\Sigma X,objF=C^{f},
            arAB=,arBC= ,arAC=,arDE=\binom{0}{1},arEF={\scriptscriptstyle (1,0)\;},
            arBD= ,arDA=1 ,arCF=1 ,arFD=,arFB=,arEA={\scriptscriptstyle (0 ,1)\;},arCE=\;\binom{1}{0}}
$$
we obtain
\begin{align}\label{terce}
[\Gamma_{X}]&=[X\st{0}\r C^{f}\st{\binom{1}{0}}\To  C^{f}\oplus\Sigma X\st{(0,1)}\To \Sigma X]
+[\Sigma X\st{\binom{0}{1}}\r C^{f}\oplus\Sigma X\st{(1,0)}\To  C^{f}\st{0}\r\Sigma X].
\end{align}

Combining \eqref{primen}, \eqref{segun} and \eqref{terce}, and using (R9),
\begin{align*}
[\Delta_{\tr}]+[\Delta]={}&[\Gamma_{X}]-[\Sigma X\st{\binom{0}{1}}\r C^{f}\oplus\Sigma X\st{(1,0)}\To  C^{f}\st{0}\r\Sigma X]\\
&+[C^{f}\st{\binom{1}{0}}\To  C^{f}\oplus\Sigma X\st{(0,1)}\To  \Sigma X\st{0}\r\Sigma C^{f}]\\
={}&[\Gamma_{X}]+\grupo{[C^{f}],[\Sigma X]}.
\end{align*}
\end{proof}

\begin{thm}\label{todo}
The  inclusion $ j$ fits into a unique strong deformation retraction,
$$\EZDIAG{\D{D}_{*}(^{\filledsquare}\C{T})}{{\D{D}}_{*}(\C{A}),}{  p}{ j}{\alpha}$$
where $\alpha \colon  j {p}\Rightarrow 1$ satisfies the following equations, $\filledsquare= b,d,v$. Given an arbitrary object $X$ in $\C T$ and a connective object $Y$ in $\C T$,
\begin{align}
\label{todo.1}0&=[\Gamma_X]+\alpha([\Sigma X])^{[X]}+\alpha([X]),\\
\label{todo.2}\alpha([Y])&=[\Delta_{Y}]
+\alpha([Y_{\leq -1}]).
\end{align}
\end{thm}

\begin{proof}
We inductively define $\alpha$ on generators $[X]$.  Let $X$ be a non-trivial object in $\C T$. Since the $t$-structure is non-degenerate, $H^{*}X\neq 0$. Let $n\in\mathbb{Z}$ and $ m\in\mathbb{Z}$ be the minimum and maximum integers such that $H^{n}X\neq 0\neq H^{m}X$, respectively, i.e.~$[n,m]$ is the smallest interval where the homology of $X$ is concentrated. We now define $p[X]$ by induction on $(m-n,\abs{n})\in\mathbb{N}^{2}$ with respect to the lexicographic order. If $m-n=0$ and $\abs{n}=0$ then  $X=A$ in $\C A$, and we must define 
\begin{align}\label{pobjeto1}
\alpha([A])&=0
\end{align}
so that $\alpha j=0$ holds. 
There are two kinds of induction steps: $(x,y)\r(x,y+1)$ and $(x,y)\forall y\in\mathbb N\r (x+1,0)$. In the first case we define $\alpha([X])$  so that \eqref{todo.1} is satisfied,
\begin{align}\label{pobjeto2}
\alpha([X])&=\left\{
\begin{array}{ll}
-[\Gamma_{\Sigma^{-1}X}]^{-[\Sigma^{-1}X]}-\alpha([\Sigma^{-1}X])^{-[\Sigma^{-1}X]},&\text{if }n>0,\\
-\alpha([\Sigma X])^{[X]}-[\Gamma_{X}],&\text{if }n<0.
\end{array}
\right.
\end{align}
In the second case, $X=Y$ is connective, and we define $\alpha([Y])$ by \eqref{todo.2}.

We will now follow  Lemmas \ref{abouthomo} and \ref{cacharro}. We first check that $f=1_{\D{D}_{*}(^{\filledsquare}\C{T})}+\alpha$ factors through $j$, which is essentially an inclusion by Lemma \ref{inyectivo}. It is enough to prove that $f$ applied to any of the generators in Proposition \ref{bastan} lies in the image of $j$. Given $A$ in $\C A$, a short exact sequence $A\into B\onto C$ in $\C A$, and two objects $X$ and $Y$ in $\C T$, with $Y$ connective,
\begin{align*}
f [A]&= [A]+\partial\alpha  [A]=[A]+\partial(0)= [A],\\
f [A\into B\onto C]&=[A\into B\onto C]+\alpha\partial  [A\into B\onto C]\\
&=[A\into B\onto C]+\alpha(- [B]+[C]+[A])\\
&=[A\into B\onto C]-\alpha([B])^{- [B]+[C]+[A]}+\alpha([C])^{[A]}+\alpha[A]\\
\scriptstyle\eqref{pobjeto1}\qquad&=[A\into B\onto C],\\
f[\Gamma_{X}]&=[\Gamma_{X}]+\alpha\partial [\Gamma_{X}]\\
&=[\Gamma_{X}]+\alpha([\Sigma X]+[X])\\
&=[\Gamma_{X}]+\alpha([\Sigma X])^{[X]}+\alpha[X]\\
\scriptstyle\eqref{todo.1}\qquad&=0,\\
f[\Delta_{Y}]&=[\Delta_{Y}]+\alpha\partial [\Delta_{Y}]\\
&=[\Delta_{Y}]+\alpha(-[Y]+[H^{0}Y]+[Y_{\leq -1}])\\
&=[\Delta_{Y}]-\alpha([Y])^{\partial [\Delta_{Y}]}+\alpha([H^{0}Y])^{[Y_{\leq -1}]}+\alpha([Y_{\leq -1}])\\
\scriptstyle\eqref{pobjeto1}\qquad&=-\alpha([Y])+[\Delta_{Y}]+\alpha([Y_{\leq -1}])\\
\scriptstyle\eqref{todo.2}\qquad&=0.
\end{align*}
The first two equations also prove that $fj=j$.

The homotopy $\alpha$ has been defined so that $\alpha[A]=0$ for any object $A$ in $\C A$. Hence, it is only left to prove that
\begin{align}\label{finito}
\alpha([X]+\partial\alpha[X])&=0.
\end{align}
In order to check this equation, we follow the same induction pattern as for the definition of $\alpha[X]$.  If $X$ is in $\C A$ then \eqref{finito} follows immediately from \eqref{pobjeto1}. Equation \eqref{finito}  for $[X]$ is equivalent to equation \eqref{finito}  for $[\Sigma X]$. Indeed, applying $\partial$ to \eqref{todo.1} we obtain
\begin{align*}
0&=[\Sigma X]+\partial\alpha[\Sigma X]+[X]+\partial\alpha[X].
\end{align*}
Hence
\begin{align*}
0&=\alpha(0)=\alpha([\Sigma X]+\partial\alpha[\Sigma X])^{[X]+\partial\alpha[X].}+\alpha([X]+\partial\alpha[X]).
\end{align*}
Finally, if $Y$ is connective then \eqref{finito} for $[Y]$ is equivalent to \eqref{finito} for $[Y_{\leq -1}]$. Actually, applying $\partial$ to \eqref{todo.2} we derive
\begin{align*}
\partial\alpha[Y]&=-[Y]+[H^{0}Y]+[Y_{\leq -1}]+\partial\alpha[Y_{\leq -1}].
\end{align*}
Therefore, 
\begin{align*}
\alpha([Y]+\partial\alpha(Y))&=\alpha([H^{0}Y]+[Y_{\leq -1}]+\partial\alpha[Y_{\leq -1}])\\
&=\alpha([H^{0}Y])^{[Y_{\leq -1}]+\partial\alpha[Y_{\leq -1}]}+\alpha([Y_{\leq -1}]+\partial\alpha[Y_{\leq -1}])\\
\scriptstyle\eqref{pobjeto1}\qquad&=\alpha([Y_{\leq -1}]+\partial\alpha[Y_{\leq -1}]).
\end{align*}

\end{proof}

\begin{cor}
The comparison maps in Theorem \ref{pepillo} induce isomorphisms
$$K_{i}(\C A)\cong K_{i}(^{d}\C T)\cong K_{i}(^{b}\C T)\cong K_{i}(^{v}\C T),\qquad i=0,1.$$
\end{cor}

\begin{defn}
For $\filledsquare= b,d,v$, we define
$\D{D}^t_{*}({}^\filledsquare\C{T})$ as the quotient of $\D{D}_{*}({}^\filledsquare\C{T})$ by the following relations:
\begin{itemize}
\item[(R11)] $[\Delta_{Y}]=0$ for any connective object $Y$.
\item[(R12)] $[\Gamma_{X}]=0$ for any object $X$.
\end{itemize}
\end{defn}

\begin{prop}
The composite of the morphism $j$ with the natural projection is an isomorphism, $\filledsquare= b,d,v$, $$
\bar\jmath\colon \D{D}_{*}(\C{A})\st{\cong}\To \D{D}^t_{*}({}^\filledsquare\C{T}).$$
\end{prop}

\begin{proof}
We use the notation in Theorem \ref{todo}. We have seen in its proof that the morphism $p$ satisfies $p[\Delta_{Y}]=0=p[\Gamma_{Y}]$, hence it factors through $\D{D}_{*}({}^\filledsquare\C{T})$. Denote $\bar p$ the (unique) factorization. We are going to check that 
$$\xymatrix{\D{D}^t_{*}({}^\filledsquare\C{T})\ar@<.5ex>[r]^{\bar p}&\D{D}_{*}(\C{A})\ar@<.5ex>[l]^{\bar \jmath}}$$
are mutually inverse isomorphisms. The equation $\bar p\bar \jmath=1$ follows from $pj=1$. In order to check that $\bar\jmath\bar p=1$ it is enough to notice that the image of $\alpha$ vanishes in $\D{D}^t_{1}({}^\filledsquare\C{T})$. This follows easily from \eqref{todo.1} and \eqref{todo.2} by the induction procedure applied twice in the previous proof.
\end{proof}

\begin{rem}\label{notinv}
This proposition admits a direct proof. Indeed, the equations in the proof of Theorem \ref{todo} yield an inductive formula for $p$. Using this formula one can easily check that $\bar p\bar\jmath=1$. The proof of $\bar\jmath\bar p=1$ is straightforward, but rather tedious. A previous version of this paper contained a sketch. We invite the reader interested in practicing with the algebra of stable quadratic modules to reconstruct it as an exercise.

The advantage of a direct proof is that we would obtain Lemma \ref{inyectivo} as an immediate corollary, avoiding invoking Neeman's result, whose proof is long and complicated.
\end{rem}

\subsection{The $K$-theory of some unusual triangulated categories}\label{unusual}

Let $R$ be a commutative local ring with maximal ideal $(\eps)\neq 0$ such that $\eps^2=0$ and with  residue field $k=R/(\eps)$ of characteristic $2$. This ring is quasi-Frobenius.
Notice that  either $\eps=2$ or $R=k[\eps]/\eps^{2}$. Recall from \cite{tcwm} that the category $\mathcal{F}(R)$ of finitely generated free $R$-modules admits a unique  triangulated structure with identity suspension functor $\Sigma =1_{\mathcal{F}(R)}$ such that  the following triangle is \exact,
$$\Delta_{\eps}\colon\quad R\st{\eps}\To  R\st{\eps}\To  R\st{\eps}\To  R.$$
This triangulated category does not admit models if $\eps=2$. Otherwise it is the compact derived category of  a certain differential graded algebra, in particular it can be described as the homotopy category of a Waldhausen category.

\begin{thm}\label{raro}
Neeman's and Breuning's $K$-theories of the triangulated category $\mathcal{F}(R)$ satisfy:
\begin{align*}
K_{0}(^{b}\mathcal{F}(R))\cong K_{0}(^{d}\mathcal{F}(R))\cong K_{0}(^{v}\mathcal{F}(R))&\cong0,& 
K_{1}(^{b}\mathcal{F}(R))\cong K_{1}(^{d}\mathcal{F}(R))&\cong 0.
\end{align*}
Moreover, there is a surjective homomorphism $K_{1}(^{v}\mathcal{F}(R))\onto  k^\times/(k^\times)^2$.
\end{thm}

Notice that $k^\times/(k^\times)^2\neq 0$ as long as $k$ is non-perfect, thus we obtain examples of triangulated categories $\C{T}$ such that $K_{1}(^{b}\!\C{T})$ and $K_{1}(^{d}\!\C{T})$  are not isomorphic to $K_{1}(^{v}\!\C{T})$. 

An acyclic $3$-periodic complex  in $\mathcal{F}(R)$,
$$\Delta\colon\quad X_{0}\st{d_{2}}\To X_{2}\st{d_{1}}\To X_{1}\st{d_{0}}\To X_{0},$$
fits into a natural short exact sequence of complexes,
$$\eps\cdot \Delta\into \Delta\st{\eps}\onto \eps\cdot \Delta,$$
which induces  isomorphisms in homology,
$$\sigma_{n}^{\Delta}\colon H_{n+1}(\eps\cdot \Delta)\cong H_{n}(\eps\cdot \Delta),\quad n\in\mathbb{Z}/3.$$
A \exact\ triangle  in $\mathcal{F}(R)$ is the same as  an acyclic $3$-periodic chain complex~$\Delta$ such that the automorphism
$$\rho^{\Delta}_{n}=\sigma_{n}^{\Delta}\sigma_{n+1}^{\Delta}\sigma_{n+2}^{\Delta}\colon H_{n}(\eps\cdot \Delta)\cong H_{n}(\eps\cdot \Delta)$$
is the identity for some, and hence all, $n\in\ZZ/3$, see \cite[Remark 7]{tcwm}. 

\begin{defn}
We define the \emph{determinant} of an acyclic $3$-periodic complex $\Delta$ in $\mathcal{F}(R)$ as
$\det(\Delta)=\det(\rho_{n}^{\Delta})\in k^{\times}$, which is independent of $n\in\ZZ/3$.
\end{defn}

The determinant is clearly invariant under shifts of the complex $\Delta$ and isomorphisms. Notice that the determinant of a \exact\ triangle is $1\in k^{\times}$. 

\begin{lem}\label{sesi}
Given a short exact sequence of acyclic $3$-periodic complexes $\Delta'\into \Delta\onto \Delta''$ in $\mathcal{F}(R)$ we have
$\det(\Delta)=\det(\Delta')\det(\Delta'')$ mod $(k^\times)^2$.
\end{lem}

\begin{proof}
The short exact sequence in the statement splits in each degree, so we have a short exact sequence $\eps\cdot \Delta'\into \eps\cdot  \Delta\onto \eps\cdot \Delta''$ which induces a long exact sequence in homology,
$$\cdots \r H_n(\eps\cdot \Delta')\To H_n(\eps\cdot  \Delta)\To H_n(\eps\cdot \Delta'')\To H_{n-1}(\eps\cdot \Delta')\r\cdots.$$
Moreover, the following $3\times 3$ diagram of short exact sequences of complexes
$$\xymatrix{
\eps\cdot \Delta'\ar@{>->}[r]\ar@{>->}[d]&\eps\cdot \Delta\ar@{->>}[r]\ar@{>->}[d]&\eps\cdot \Delta''\ar@{>->}[d]\\
\Delta'\ar@{>->}[r]\ar@{->>}[d]&\Delta\ar@{->>}[r]\ar@{->>}[d]&\Delta''\ar@{->>}[d]\\
\eps\cdot \Delta'\ar@{>->}[r]&\eps\cdot \Delta\ar@{->>}[r]&\eps\cdot \Delta''}
$$
shows that the following diagram is commutative, since we are in characteristic $2$, 
$$\xymatrix{H_n(\eps\cdot \Delta')\ar[r]\ar[d]^{\sigma_n^{\Delta'}}& H_n(\eps\cdot  \Delta)\ar[r]\ar[d]^{\sigma_n^{\Delta}}& H_n(\eps\cdot \Delta'')\ar[r]\ar[d]^{\sigma_n^{\Delta''}}&H_{n-1}(\eps\cdot \Delta')\ar[d]^{\sigma_{n-1}^{\Delta'}}\\
H_{n-1}(\eps\cdot \Delta')\ar[r]& H_{n-1}(\eps\cdot  \Delta)\ar[r]& H_{n-1}(\eps\cdot \Delta'')\ar[r]&H_{n-2}(\eps\cdot \Delta')}$$
Therefore we have an automorphism of a $9$-periodic long exact sequence, $n\in\ZZ/3$,
$$\xymatrix@C=25pt{H_n(\eps\cdot \Delta')\ar[r]^\phi\ar[d]^-{\rho_n^{\Delta'}}& H_n(\eps\cdot  \Delta)\ar[r]\ar[d]^{\rho_n^{\Delta}}& H_n(\eps\cdot \Delta'')\ar[r]^{}\ar[d]^{\rho_n^{\Delta''}}&H_{n-1}(\eps\cdot \Delta')\ar[d]^{\rho_{n-1}^{\Delta'}}\\
H_{n}(\eps\cdot \Delta')\ar[r]^-{\phi}& H_{n}(\eps\cdot  \Delta)\ar[r]& H_{n}(\eps\cdot \Delta'')\ar[r]^{}&H_{n-1}(\eps\cdot \Delta')}$$
Using the multiplicative property of determinants with respect to automorphisms of short exact sequences, if $\rho'\colon\ker\phi\cong \ker\phi$ is the automorphism induced by $\rho_n^{\Delta'}$ we get
\begin{align*}
\det(\rho')^2={}&\det(\rho_n^{\Delta'})\det(\rho_n^{\Delta})^{-1}\det(\rho_n^{\Delta''})\\
&\det(\rho_{n-1}^{\Delta'})^{-1}\det(\rho_{n-1}^{\Delta})\det(\rho_{n-1}^{\Delta''})^{-1}\\
&\det(\rho_{n-2}^{\Delta'})\det(\rho_{n-2}^{\Delta})^{-1}\det(\rho_{n-2}^{\Delta''}). 
\end{align*}
Since these determinants are independent of $n\in\ZZ/3$ we deduce, as desired,  that $$\det(\rho')^2\det(\rho_n^\Delta)=\det(\rho_n^{\Delta'})\det(\rho_n^{\Delta''}).$$

\end{proof}

\begin{lem}\label{virtualson}\label{son}
A virtual triangle in $\mathcal{F}(R)$ is the same as an  acyclic $3$-periodic complex $\Delta$. Moreover, $\Delta$ is the direct sum of a contractible  triangle and 
$$R^{d}\st{\eps}\To  R^{d}\st{\eps}\To  R^{d}\st{\eps\cdot\bar{\rho}}\To  R^{d},$$
where $d=\dim_{k}H_{n}(\eps\cdot \Delta)$ and $\bar{\rho}$ is any automorphism of $R^{d}$ with $\bar{\rho}\otimes_{R}k=\rho^{\Delta}_{n}$ for some basis of $H_{n}(\eps\cdot \Delta)$, $n\in\ZZ/3$.
\end{lem}

\begin{proof}
It is clear that  a virtual triangle is acyclic. Consider an acyclic $3$-periodic complex in $\mathcal{F}(R)$,
$$\Delta\colon\quad X_{0}\st{d_2}\To X_{2}\st{d_1}\To X_{1}\st{d_0}\To X_{0}.$$
Let $X_{n}'\subset \ker d_{n}\subset X_{n}$ be an injective envelope of $\eps\cdot \ker d_{n}$. 
Since $\Delta$ is acyclic, we can factor this inclusion as 
$$X_{n}'\r X_{n+1}\st{d_{n}}\To X_{n}.$$ This allows us to split $\Delta=\Delta'\oplus \Delta''$ as the direct sum of a contractible factor $\Delta'$ and a second factor $\Delta''$,
$$\begin{array}{c}
\Delta'\colon\quad X_{0}'\oplus X_{2}'\st{\binom{0\;1}{0\;0}}\To X_{2}'\oplus X_{1}'\st{\binom{0\;1}{0\;0}}\To X_{1}'\oplus X_{0}'\st{\binom{0\;1}{0\;0}}\To X_{0}'\oplus X_{2}',\vspace{-5pt}\\{}\\
\Delta''\colon\quad X_{0}''\st{d_{2}''}\To X_{2}''\st{d_{1}''}\To X_{1}''\st{d_{0}''}\To X_{0}'',
\end{array}$$
with $\im d_{n}''=\ker d_{n-1}''\subset \eps\cdot X_{n-1}$, so $d_{n}''=\eps\cdot\bar{d}_{n}$ for some $\bar{d}_{n}\colon X_{n+1}\r X_{n}$. One can easily check that
$\sigma_{n}^{\Delta}=\bar{d}_{n}\otimes_{R}k$, therefore $\bar{d}_{n}\otimes_{R}k$, and hence $\bar{d}_{n}$, is an isomorphism. Now the following isomorphism of $3$-periodic complexes proves the lemma
$$\xymatrix{
X''_0\ar[r]^{\eps}\ar[d]^{1}&X''_0\ar[r]^{\eps}\ar[d]^{\bar{d}_{2}}&X''_0\ar[rr]^{\eps\cdot \bar{d}_{0}\bar{d}_{1}\bar{d}_{2}}\ar[d]^{\bar{d}_{1}\bar{d}_{2}}&&X''_0\ar[d]^{1}\\
X''_0\ar[r]^{\eps\cdot \bar{d}_{2}}&X''_2\ar[r]^{\eps\cdot \bar{d}_{1}}&X''_1\ar[rr]^{\eps\cdot \bar{d}_{0}}&&X''_0
}$$
\end{proof}

\begin{lem}\label{kira}
Given a virtual octahedron in $\mathcal{F}(R)$
$$\octahedronada{objA=X,objB=Y,objC=Z,objD=\cone{f},objE=\cone{gf},objF=\cone{g},
            arAB=f,arBC=g,arAC=gf}$$
formed by virtual triangles $\Delta_{f}$, $\Delta_{g}$, $\Delta_{gf}$, $\widetilde{\Delta}$, the following formula holds, $$\det(\Delta_{g})\det(\Delta_{f})=\det(\Delta_{gf})\det(\widetilde{\Delta})\mod (k^\times)^2.$$
\end{lem}

\begin{proof}
The octahedron contains  morphisms of complexes,
$$\!\!
\xymatrix{
\Delta_{f}\colon\ar@<-1.5ex>[d]_{\varphi}\\
\Delta_{gf}\colon\\}
\xymatrix{
X\ar[r]^{f}\ar[d]^{1}&Y\ar[r]^{i^{f}}\ar[d]^{g}&\cone{f}\ar[d]^{\bar g}\ar[r]^{q^{f}}&X\ar[d]^{1}\\
X\ar[r]^{gf}&Z\ar[r]^{i^{gf}}&\cone{gf}\ar[r]^{q^{gf}}&X\\}
\quad\xymatrix{
\Delta_{g}\colon\ar@<-1.ex>[d]_{\psi}\\
\widetilde{\Delta}\colon\\}
\xymatrix{
Y\ar[r]^{g}\ar[d]^{i^{f}}&Z\ar[r]^{i^{g}}\ar[d]^{i^{gf}}&\cone{g}\ar[d]^{1}\ar[r]^{q^{g}}&Y\ar[d]^{g}\\
\cone{f}\ar[r]^{\bar g}&\cone{gf}\ar[r]^{\bar{f}}&\cone{g}\ar[r]^{i^{f}q^{g}}&\cone{f}\\}$$
The mapping cones of these morphisms fit in the middle of  well known short exact sequences of complexes involving the target and a translation of the source, hence by Lemma \ref{sesi},
\begin{align*}
\det(\operatorname{Cone}(\varphi))&=\det(\Delta_{gf})\det(\Delta_{f})\mod(k^{\times})^{2},\\
\det(\operatorname{Cone}(\psi))&=\det(\widetilde{\Delta})\det(\Delta_{g})\mod(k^{\times})^{2}.
\end{align*}
In this case they also fit into the following short exact sequences,
$$\xymatrix{\Gamma_{X}\colon\ar@<-1.5ex>@{>->}[d]\\\operatorname{Cone}(\varphi)\colon\ar@<-1.5ex>@{->>}[d]\\ \Delta'\;\colon}
\quad\xymatrix@C=40pt{
X\ar[r]\ar@{>->}[d]_-{\binom{1}{-f}}&0\ar[r]\ar@{>->}[d]&X\ar[r]^-{1}\ar@{>->}[d]^-{\binom{-i^{gf}}{1}}&X\ar@{>->}[d]^-{\binom{1}{-f}}\\
X\oplus Y\ar[r]^-{\bigl( \begin{smallmatrix}
gf &g\\ 0&-i^{f}
\end{smallmatrix} \bigr)}\ar@{->>}[d]_-{(f,1)}&
Z\oplus \cone{f}
\ar[r]^-{\bigl( \begin{smallmatrix}
i^{gf} &\bar g\\ 0&-q^{f}
\end{smallmatrix} \bigr)}\ar@{->>}[d]^-{1}&
\cone{gf}\oplus X\ar[r]^-{\quad\bigl( \begin{smallmatrix}
q^{gf} &1\\ 0&-f
\end{smallmatrix} \bigr)}\ar@{->>}[d]^-{(1,i^{gf})}&
X\oplus Y\ar@{->>}[d]^-{(f,1)}\\
 Y\ar[r]_-{\bigl( \begin{smallmatrix}
g\\ -i^{f}
\end{smallmatrix} \bigr)}&
Z\oplus \cone{f}
\ar[r]_-{(i^{gf} ,\bar g)}&
\cone{gf}\ar[r]_-{fq^{gf}}&
 Y}$$
 $$\xymatrix{\Gamma_{X}'\colon\ar@<-1.5ex>@{>->}[d]\\\operatorname{Cone}(\psi)\colon\ar@<-1.5ex>@{->>}[d]\\ \Delta''\colon}
\quad\xymatrix@C=40pt{
0\ar[r]\ar@{>->}[d]&X\ar[r]^-{1}\ar@{>->}[d]^-{\binom{-i^{gf}}{1}}&X\ar@{>->}[d]^-{\binom{1}{-f}}
\ar[r]&0\ar@{>->}[d]\\
Z\oplus \cone{f}
\ar[r]^-{\bigl( \begin{smallmatrix}
i^{gf} &\bar g\\ 0&-q^{f}
\end{smallmatrix} \bigr)}\ar@{->>}[d]^-{1}&
\cone{gf}\oplus X\ar[r]^-{\bigl( \begin{smallmatrix}
q^{gf} &1\\ 0&-f
\end{smallmatrix} \bigr)}\ar@{->>}[d]^-{(1,i^{gf})}&
X\oplus Y\ar@{->>}[d]^-{(f,1)}\ar[r]^-{\bigl( \begin{smallmatrix}
gf &g\\ 0&-i^{f}
\end{smallmatrix} \bigr)}&Z\oplus \cone{f}\ar@{->>}[d]^{1}\\
Z\oplus \cone{f}
\ar[r]_-{(i^{gf} ,\bar g)}&
\cone{gf}\ar[r]_-{fq^{gf}}&
 Y\ar[r]_-{\bigl( \begin{smallmatrix}
g\\ -i^{f}
\end{smallmatrix} \bigr)}&
Z\oplus \cone{f}}$$
Moreover, $\Delta''$ is the translation of $\Delta'$. Therefore, using again  Lemma \ref{sesi}, 
\begin{align*}
\det(\operatorname{Cone}(\varphi))&=\det(\Gamma_{X})\det(\Delta')=\det(\Delta')\mod(k^{\times})^{2},\\
\det(\operatorname{Cone}(\psi))&=\det(\Gamma_{X}')\det(\Delta'')=\det(\Delta')\mod(k^{\times})^{2},
\end{align*}
so we are done.
\end{proof}

\begin{proof}[Proof of Theorem \ref{raro}]
In this proof, $\filledsquare = b,d$. 
We can suppose  that the objects of $\mathcal{F}(R)$ are simply $R^{n}$, $n\geq 0$, hence Proposition \ref{kkk} applies. 
Generators (G2) vanish in $\D{D}_{*}^{+}(^{\filledsquare}\mathcal{F}(R))$. Indeed, given an isomorphism $h\colon R^{n}\cong R^{n}$, the  \exact\ triangle isomorphism
$$\xymatrix{
R^{n}\ar[r]^-{\eps}\ar[d]^{h}_{\cong}&R^{n}\ar[r]^-{\eps}\ar[d]^{h}_{\cong}&R^{n}\ar[r]^-{\eps}\ar[d]^{h}_{\cong}&R^{n}\ar[d]^{h}_{\cong}\\
R^{n}\ar[r]^-{\eps}&R^{n}\ar[r]^-{\eps}&R^{n}\ar[r]^-{\eps}&R^{n}
}$$
together with (R7) and (R10) yield
\begin{align*}
[h]+[h]^{n[R]}&=-n[\Delta_{\eps}]+[h]+n[\Delta_{\eps}]=-n[\Delta_{\eps}]+n[\Delta_{\eps}]+[h]^{\partial(n[\Delta_{\eps}])}=[h]^{n[R]},
\end{align*}
therefore $[h]=0$. In particular, the bracket vanishes in $\D{D}_{*}^{+}(^{\filledsquare}\mathcal{F}(R))$ by (R9$'$).
Moreover, $2[\Delta_{\eps}]=[\Gamma_{X}]$ by Lemma \ref{too2}. Furthermore, (R7) shows that two isomorphic \exact\ triangles represent the same (G3) generator. Any exact triangle in $\mathcal{F}(R)$ is a direct sum of a contractible triangle and copies of $\Delta_{\eps}$, hence $\D{D}_{*}^{+}(^{\filledsquare}\mathcal{F}(R))$ is generated by $[R]$ and $[\Delta_{\epsilon}]$, see Remark \ref{dosmas}. Let $\alpha\colon f\rr 0$ be the homotopy to the trivial endomorphism of $\D{D}_{*}^{+}(^{\filledsquare}\mathcal{F}(R))$ defined by Lemma \ref{abouthomo} and  $\alpha([R])=[\Delta_{\eps}]$. We have
\begin{align*}
f([R])={}&\partial\alpha([R])=\partial([\Delta_{\eps}])=-[R]+[R]+[R]=[R],\\
f([\Delta_{\eps}])={}&\alpha\partial([\Delta_{\eps}])=\alpha(-[R]+[R]+[R])=\alpha([R])=[\Delta_{\eps}].
\end{align*}
Thus $f$ is the identity, i.e. $\D{D}_{*}^{+}(^{\filledsquare}\mathcal{F}(R))$ is contractible. In particular $K_{i}(^{\filledsquare}\mathcal{F}(R))\cong\pi_{i}\D{D}_{*}^{+}(^{\filledsquare}\mathcal{F}(R))=0$, $i=0,1$. Recall also that $K_{0}(^{v}\mathcal{F}(R))=K_{0}(^{\filledsquare}\mathcal{F}(R))$.

Let us regard the abelian group $k^{\times}/(k^{\times})^2$ as a stable quadratic module concentrated in degree $1$. By Proposition \ref{menos} and Lemma \ref{kira} the determinant of virtual triangles defines a 
morphism $p\colon \D{D}_{*}(^{v} \mathcal{F}(R))\r k^{\times}/(k^{\times})^2$ with $p[\Delta]=\det(\Delta)$ for any virtual triangle $\Delta$. 
The induced morphism $\pi_1(p)\colon K_{1}(^{v} \mathcal{F}(R))\r k^{\times}/(k^{\times})^2$ 
is 
surjective since
\begin{align*}
p[R\mathop{\rightrightarrows}\limits_{\eps}^{\eps} R\mathop{\rightrightarrows}\limits_{\eps}^{\eps} R\mathop{\rightrightarrows}\limits_{\eps\cdot\lambda}^{\eps} R]
&= \det(\Delta_{\eps})^{-1}\det(R\st{\eps}\r  R\st{\eps}\r  R\st{\eps \cdot \lambda}\To  R) =\lambda. 
\end{align*}
\end{proof}

\subsection{A counterexample to two conjectures by Maltsiniotis}\label{counter}
Based on the following example, which goes back to Deligne, Vaknin, Ferrand and Breuning \cite{dtc,ferrand,dpcecrk0g}, we disprove two conjectures due to Maltsiniotis.

Let $\C{E}=\mathbf{proj}(R)$ be the category of finitely generated free modules over the ring of dual numbers $R=k[\eps]/\eps^2$ over a field $k$. We regard $D^{b}(\C{E})$ as a strongly triangulated category with the structure indicated in \cite{cts}. It is well known that $K_1(\C{E})\cong K_1(R)\cong R^\times$ is the group of units. 
There is an isomorphism,
$k\times k^{\times}\cong  R^{\times}\colon (x,u)\mapsto u(1+x\eps)$.
Given $x\in k$, the element $1+x\varepsilon \in R^\times$ corresponds to $$[1+x\varepsilon\colon R\st{\sim}\To  R]\in K_1(\C{E}).$$ This element is in the kernel of $K_1(\C{E})\r K_1(^{s}\!D^b(\C{E}))$ since we have the following automorphism of \exact\ triangles, see (R7),  
$$
\xymatrix{R\ar[r]^{\eps}\ar[d]_{1}&R\ar[r]\ar[d]^{1+x\eps}&C\ar[r]\ar[d]^{1}&\Sigma R\ar[d]^{1}\\
R\ar[r]^{\eps}&R\ar[r]&C\ar[r]&\Sigma R}
$$
Indeed,
$C$ is the complex $\cdots\r 0\r R\st{\eps}\r  R\r 0\r\cdots$,
and the square in the middle commutes in the derived category since we have a homotopy defined by the homomorphism $R\r R\colon 1\mapsto x$. Maltsiniotis conjecture in \cite{cts} that $K_n(\C{E})\r K_n(^{s}\!D^b(\C{E}))$ would be an isomorphism for all $n\geq 0$.

The same example shows that, if we only regard $D^b(\C{E})$ as a triangulated category, then the comparison homomorphisms,
\begin{align*}
K_1(\C{E})&\To  K_1(^{b}\!D^b(\C{E})),&K_1(\C{E})&\To  K_1(^{d}\!D^b(\C{E})),&K_1(\C{E})&\To  K_1(^{v}\!D^b(\C{E})),
\end{align*}
are not isomorphisms.

Moreover, if $\mathbb{D}_{\C{E}}$ is the triangulated derivator associated to $\C{E}$ \cite{dtce}, the comparison homomorphism  $K_{1}(\mathbb{D}_{\C{E}})\r  K_{1}(^{s}\mathbb{D}_{\C{E}}(*))$ in \cite{cts} is neither an isomorphism because the composite,
$$K_{1}(\C{E})\st{\cong}\To  K_{1}(\mathbb{D}_{\C{E}})\To  K_{1}(^{s}\mathbb{D}_{\C{E}}(*))\cong K_1(^{s}\!D^b(\C{E}))$$
is the previous comparison homomorphism between Quillen's $K$-theory and Maltsiniotis $K$-theory of a strongly triangulated category, which is not injective. The first arrow is the natural comparison homomorphism in \cite{ktdt}, which is an isomorphism by \cite[Theorem 1]{malt}. Maltsiniotis also conjectured that $K_{n}(\mathbb{D}_{\C{E}})\r  K_{n}(^{s}\mathbb{D}_{\C{E}}(*))$  would be an isomorphism for all $n\geq 0$, see \cite{cts}.

We can actually compute Neeman's $K_{1}(^{d}\!D^{b}(\C{E}))$ and  Breuning's $K_{1}(^{b}\!D^{b}(\C{E}))$. This improves and generalizes some computations in \cite{dpcecrk0g}.

\begin{prop}
For $\filledsquare=b,d$, 
the stable quadratic module 
$\D{D}_{*}(^{\filledsquare}D^{b}(\C E))$ 
is weakly equivalent to
$$\mathbb{Z}\otimes \mathbb{Z}\st{\grupo{\cdot,\cdot}}\To  k^{\times}\st{\partial}\To  \mathbb{Z},\qquad \grupo{1,1}=-1,\qquad \partial=0.$$
Moreover, the comparison homomorphism,
$$k\times k^{\times}\cong K_{1}(k[\eps]/\eps^2)\To  K_{1}(^{\filledsquare}D^{b}(\C E))\cong k^{\times},$$
is the natural projection onto the second factor. 
\end{prop}

\begin{proof}
Theorem \ref{todo} and Remark \ref{warrior} show that 
$\D{D}_{*}(^{\filledsquare}D^{b}(k))$ is weakly equivalent to  the stable quadratic module in the statement. We have already seen that the subgroup $k\subset k\times k^{\times}$ is in the kernel of the comparison homomorphism, which is known to be surjective, see Section \ref{comparase}. Therefore, it induces an epimorphism,
$k^{\times}\onto K_{1}(^{d}D^{b}(k[\eps]/\eps^2))$. 
This epimorphism is also injective since the following composite is the identity,
$$k^{\times}\onto K_{1}(^{d}D^{b}(k[\eps]/\eps^2))\To  K_{1}(^{d}D^{b}(k))\cong k^{\times}.$$
Here the second arrow is induced by the change of coefficients along the $k$-algebra morphism $k[\eps]/\eps^2\onto k\colon\eps\mapsto 0$. In particular, the map $\D{D}_{*}(^{\filledsquare}D^{b}(\C E))\r \D{D}_{*}(^{\filledsquare}D^{b}(k))$ induced by the previous change of coefficients is an isomorphism in $\pi_1$. It is also an isomorphism in $\pi_0$, since $K_0(R)\cong K_0(k)\cong\mathbb{Z}$ generated by the free module of rank $1$, compare again Section \ref{comparase}. Hence we are done.
\end{proof}


\numberwithin{equation}{subsection}

\section{Strict \Picardcategories}\label{victim}

In this section we review strictification results for \Picardcategories\ and related categorical structures. This theory is essential for the proof of our main results on determinant functors. 

\subsection{Categorical groups}

A monoidal groupoid  $(\C G,\otimes, I)$ with unit object $I$ is a \emph{categorical group}
if for each object $x$ of $\C G$ there is an object $x^*$ and a map
$j_x:x^*\otimes x\cong I$.
Equivalently, there is a contravariant functor $*$ on $\C G$
such that the endofunctors  $\sp\otimes x$ and
$x\otimes\sp$ are equivalences of categories with inverses 
$\sp\otimes x^*$ and  $x^*\otimes\sp$, respectively~\cite{ccgsaa}.

A categorical group is {\em braided} or {\em symmetric} if the underlying
monoidal category is. Recall that a  {\em braiding} is a natural
isomorphism, $$\comm{x}{y}{\C{G}} \colon x\otimes y\To  y\otimes x,$$
satisfying certain coherence laws~\cite{btc}, and is a {\em symmetry}
if $\comm{y}{x}{\C{G}}\circ\comm{x}{y}{\C{G}}=1_{x\otimes y}$ is the identity.
A \emph{\Picardcategory} is just a symmetric categorical group. 

A \emph{tensor functor} between categorical groups is a functor
$F\colon\C{G}\r\C{H}$ 
together with comparison maps for tensor units and multiplication,
$$
\up{}\colon I_{\C H}\To F(I_{\C G}),
\qquad
\hp{x}{y}{F}\colon F(x)\otimes F(y)\To  F(x\otimes y),
$$
which are natural and compatible with the associativity
and unit isomorphisms~\cite{fbtc}. 
A tensor functor between braided (or symmetric) categorical groups is {\em symmetric}
if it is also compatible with the braiding isomorphisms.
A \emph{tensor natural transformation}  $\alpha\colon F\rr G$ 
is one which commutes with the comparison maps
 for multiplication. (Braided, symmetric) categorical groups, (symmetric) tensor
functors and tensor natural transformations form $2$-categories. Actually, categories enriched in groupoids.  Notice that the obvious forgetful $2$-functors
$$\text{symmetric cat.\ groups}\To  \text{braided cat.\ groups}\To \text{cat. groups}$$
are faithful in dimension $1$ and fully faithful in dimension $2$, i.e.~injective on tensor functors and bijective on tensor natural transformations.

 The
{\em homotopy groups} of a (braided, symmetric) categorical group $\C{G}$ are,
\begin{align*}
\pi_0(\C{G})&=\text{isomorphism classes of objects, with $+$ induced by $\otimes$},\\
\pi_1(\C{G})&=\aut_\C{G}(I).
\end{align*}
Homotopy groups detect equivalences. The group  $\pi_0(\C{G})$ acts on $\pi_1(\C{G})$ by $$x^*\otimes x\otimes( I\st{f}\r I)^{[x]}= x^*\otimes( I\st{f}\r I)\otimes x\colon x^*\otimes x\To  x^*\otimes x,$$ and the action is trivial in the braided  case. 
One can define the {\em $k$-invariant} in the braided case as the natural quadratic map
$$
\eta\colon\pi_0(\C{G})\To \pi_1(\C{G}),
$$
such that $x\otimes x\otimes \eta([x])=\comm{x}{x}{\C{G}}$, and $\C{G}$ is symmetric if and only if the $k$-invariant factors through a homomorphism $$\eta:\pi_0(\C{G})\otimes\ZZ/2\To \pi_1(\C{G}).$$


A (braided, symmetric) categorical group is \emph{strict} if the
associativity and unit isomorphisms are identities and the isomorphisms $j_x$ can be
chosen to be identities. Thus
the underlying monoidal category is strict and 
the functors  $\sp\otimes x$ and
$x\otimes\sp$ are isomorphisms of categories.
If $\C G$ and $\C H$ are strict then
$F\colon\C{G}\r\C{H}$ 
is a {\em strict tensor functor}
if the comparison maps for multiplication are all identities. 

Strict (braided, symmetric) categorical groups, strict (symmetric) tensor functors and tensor natural transformations again form a $2$-category. 


\subsection{Crossed modules}\label{crossedmodules}
Recall that a \emph{crossed module} is a group homomorphism $\partial\colon C_1\r C_0$ together with a right action of $C_0$ on $C_1$ such that, for $c_{i},c_{i}'\in C_{i}$,
\begin{enumerate}
\item $\partial({c_1}^{c_0})=-c_0+\partial(c_1)+c_0$,
\item ${c_1}^{\partial(c_1')}=-c_1'+c_1+c_1'$.
\end{enumerate}
We denote the group laws additively, although the groups may be non-abelian. 
It follows that the image of $\partial$ is always a
normal subgroup, and the kernel is always central. The
{\em homotopy groups} of $C_*$ are defined as in Definition \ref{homotopygroups}. 
The action of $C_0$ on $C_1$ induces an action of $\pi_0(C_*)$ on $\pi_1(C_*)$.

A \emph{reduced $2$-crossed module}, or simply \emph{reduced $2$-module}, is a crossed module together with a map,
$$\grupo{\cdot,\cdot}\colon C_0\times C_0\To  C_1,$$
which controls commutators. It must satisfy:
\begin{enumerate}\setcounter{enumi}{2}
\item $\partial\grupo{c_0,c_0'}=[c_{0}',c_{0}]$,
\item $c_1^{c_0}=c_1+\grupo{c_0,\partial(c_1)}$,
\item $\grupo{c_0,\partial(c_1)}+\grupo{\partial(c_1),c_0}=0$,
\item $\grupo{c_0,c_0'+c_0''}=\grupo{c_0,c_0'}^{c_0''}+\grupo{c_0,c_0''}$,
\item $\grupo{c_0+c_0',c_0''}=\grupo{c_0',c_0''}+\grupo{c_0,c_0''}^{c_0'}$.
\end{enumerate}
The crossed module $\partial$ and the bracket $\grupo{\cdot,\cdot}$ form a \emph{stable $2$-crossed module}, or simply \emph{stable $2$-module}, if (3), (4), (6) and
\begin{enumerate}\setcounter{enumi}{7}
\item $\grupo{c_0,c_0'}+\grupo{c_0',c_0}=0$
\end{enumerate}
are satisfied. In a reduced or stable $2$-module the action of $C_0$ on $C_1$ is completely determined by the
bracket $\grupo{\cdot,\cdot}$, by (4),  so (1) is redundant and (2)
becomes
\begin{enumerate}\setcounter{enumi}{8}
\item $\grupo{\partial(c_1),\partial(c_1')}=[c_{1}',c_{1}]$.
\end{enumerate}
The $k$-{\em invariant} of a reduced $2$-module $C_*$ is the natural quadratic map,
\begin{align*}
\eta:\pi_0(C_*)&\To \pi_1(C_*), \\
{ [c_0]}&\;\mapsto\; \grupo{c_0,c_0}.
\end{align*}
In fact $C_*$ is stable if and only if the $k$-invariant factors through a homomorphism, $$\eta:\pi_0(C_*)\otimes\ZZ/2\To \pi_1(C_*).$$

A \emph{crossed module morphism} $f\colon C_*\to D_*$
is a pair of group homomorphisms $f_i\colon C_i\r D_i$, $i=0,1$,
which respect the actions and satisfy $\partial f_1=f_0\partial$.
A \emph{reduced} or \emph{stable $2$-module morphism} 
is a morphism $f$ between the underlying crossed modules which
preserves the bracket, $\grupo{f_0,f_0}=f_1\grupo{\cdot,\cdot}$.

A \emph{homotopy} $\alpha\colon f\rr g$ between two such morphisms
is a function $\alpha\colon C_0\r D_1$ satisfying the equations in Definition \ref{baues}. Horizontal and vertical compositions are defined as there. Thus we obtain $2$-categories of crossed modules and of reduced and
stable $2$-modules, together with their morphisms
and homotopies of morphisms. All $2$-morphisms are invertible, hence we actually have categories enriched in groupoids. Notice that the obvious forgetful $2$-functors
$$\text{stable $2$-modules}\To  \text{reduced $2$-modules}\To \text{crossed modules}$$
are faithful in dimension $1$ and fully faithful in dimension $2$, i.e.~injective on morphisms and bijective on homotopies.

The $2$-category of
stable quadratic modules, introduced in Definition \ref{baues}, can be identified with the full reflective
sub-$2$-category of the $2$-category of stable $2$-modules given by those
objects $C_*$
for which the bracket vanishes whenever one argument lies in
the commutator subgroup of $C_0$, 
\begin{equation*}
\grupo{c_0,[c_0',c_0'']}=0.
\end{equation*}
Compare \cite[Lemma 4.18]{1tk}.



\subsection{Crossed modules and strict categorical groups}

The construction $\Gamma$ in Definition \ref{gammatron} applied to a braided (resp. symmetric) $2$-crossed module $C_{*}$ yields a strict braided (resp. symmetric) categorical group $\Gamma C_{*}$. Moreover, it can also be applied to an ordinary crossed module, producing a categorical group with no braiding. Furthermore, it is also defined on morphisms and homotopies as indicated there. Thus we obtain $2$-functors
$$\text{(reduced, stable $2$-)crossed modules}\st{\Gamma}\To \text{strict (braided, symmetric) cat.\ groups}.$$

Recall that a strong equivalence of $2$-categories is a $2$-functor which is fully faithful on $1$-morphisms and $2$-morphisms, and essentially surjective on objects in the classical sense, i.e.~any object in the target is isomorphic, not just equivalent, to an object in the image. 

\begin{prop}\label{anequivalence}
The three $2$-functors called $\Gamma$ above are strong equivalences of $2$-categories. 
%
\end{prop}

\begin{proof}
The result is essentially due to Verdier, who discovered the construction $\Gamma$, see \cite{ggcm} for some history. 
To recover a crossed module from a strict categorical group $\C G$ is
straightforward: $C_0$ is the object group, $C_1$ is the kernel of
target homomorphism, and
$$\partial(x\st a\to I)=x,\qquad\quad(x\st a\to
I)^y=y^*\otimes (x\st a\to I)\otimes y.$$ 
A braiding or symmetry also defines a bracket on this crossed module, 
$$\grupo{x,y}=y^*\otimes x^*\otimes
(y\otimes x\st{\com}\longrightarrow x\otimes y).$$
The morphism defined by a strict functor is the obvious one, and a tensor natural transformation $\alpha\colon f\rr g$ between strict (symmetric) tensor functors $f,g\colon\C{G}\r\C{H}$ yields a homotopy defined by the map $x\mapsto g(x)^{*}\otimes(\alpha(x)\colon f(x)\r g(x))$.
\end{proof}


\subsection{Strictifying tensor functors}
A strict (braided, symmetric)  categorical group is called  \emph{$0$-free} if
the group of objects is free. A 
(reduced, stable $2$-)crossed module $C_*$ is \emph{$0$-free}
if $C_0$ is a free group. 

A weak equivalence of $2$-categories is a $2$-functor which is fully faithful on  $2$-morphisms, essentially surjective on $1$-morphisms, and such that any object in the target is equivalent  to an object in the image (in the $2$-categorical sense).

In this section we shall prove the following result.

\begin{thm}\label{strictification}
The inclusion $2$-functor induces a weak equivalence between the $2$-categories of:
\begin{itemize}
\item $0$-free strict (braided, symmetric)  categorical groups, strict (symmetric) tensor functors and tensor natural transformations,

\item (braided, symmetric) categorical groups, (symmetric) tensor functors and tensor natural transformations.
\end{itemize}
\end{thm}

Obviously the former is a sub-$2$-category of the latter, full in dimension $2$. We give some details of the (folklore) results that (braided, symmetric) categorical groups can be strictified, and that one can replace a strict categorical group by a $0$-free one.

\begin{lem}\label{strictificationobjects}
Any  (braided, symmetric) categorical group is (symmetric) tensor equivalent to a $0$-free strict one.
\end{lem}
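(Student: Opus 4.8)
The plan is to prove Lemma \ref{strictificationobjects} in two stages, first strictifying the monoidal structure and then making the object group free. For the first stage, I would invoke the coherence theorem for (braided, symmetric) monoidal categories in the form that every (braided, symmetric) categorical group $\C{G}$ is (symmetric) tensor equivalent to a strict one $\C{G}'$. Concretely, one can take $\C{G}'$ to be the full image of the canonical tensor functor from the free strict (braided, symmetric) categorical group on the underlying category of $\C{G}$, or more directly use Mac Lane's strictification: let $\C{G}'$ have as objects the finite words in the objects of $\C{G}$, with morphisms induced from $\C{G}$ via the canonical comparison, with concatenation as a strictly associative tensor product and the empty word as strict unit; the coherence isomorphisms of $\C{G}$ assemble into a (symmetric) tensor equivalence $\C{G}'\simeq\C{G}$. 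Since all objects of $\C{G}$ are invertible, so are those of $\C{G}'$, and the inverses $j_x$ can be chosen strictly by the same argument (replace an object by a formal inverse word), so $\C{G}'$ is a strict categorical group; in the braided/symmetric case the braiding transports across since the equivalence is compatible with it.

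For the second stage, I would take a strict (braided, symmetric) categorical group $\C{G}'$ and produce a $0$-free one $\C{G}''$ tensor equivalent to it. Via Proposition \ref{anequivalence} this is the same as the statement that a crossed module (resp.\ reduced, stable $2$-module) $C_*$ is equivalent to one whose bottom group $C_0$ is free. The construction: choose a surjection $\phi\colon F\onto C_0$ from a free group $F$ (for instance the free group on the underlying set of $C_0$), and form the pullback crossed module $C_*''$ with $C_0''=F$ and $C_1''=C_1\times_{C_0}\bigl(\text{something}\bigr)$ — more precisely set $C_1'' = \ker(\phi)\times C_1$ with boundary $(k,c_1)\mapsto k+\phi^{-1}\text{-lift of }\partial(c_1)$; the cleanest formulation is that $C_1''$ is the fibre product of $F\to C_0$ and $\partial\colon C_1\to C_0$ in the appropriate sense, namely $\partial''\colon C_1''\to F$ is classified so that $\pi_0(C_*'')\cong\pi_0(C_*)$ and $\pi_1(C_*'')\cong\pi_1(C_*)$. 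The action of $F$ on $C_1''$ is pulled back along $\phi$, and the bracket $\grupo{\cdot,\cdot}''$ is defined by $\grupo{x,y}''=(\text{commutator term in }F,\ \grupo{\phi x,\phi y})$; one checks axioms (1)--(9) (resp.\ the stable ones) transfer, and that the evident projection $C_*''\to C_*$ is a weak equivalence of $2$-modules, hence the corresponding strict tensor functor $\C{G}''\to\C{G}'$ is a (symmetric) tensor equivalence by the fact (Proposition \ref{anequivalence} and the remark that homotopy groups detect equivalences) that a morphism inducing isomorphisms on $\pi_0,\pi_1$ is an equivalence.

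Composing the two equivalences $\C{G}\simeq\C{G}'\simeq\C{G}''$ gives the (symmetric) tensor equivalence to a $0$-free strict categorical group, proving the lemma; note the two steps commute with the braiding/symmetry data throughout, so the result holds uniformly in the unbraided, braided, and symmetric cases, and under Proposition \ref{anequivalence} it reads as the statement that every (reduced, stable) $2$-module is equivalent to a $0$-free one.

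I expect the main obstacle to be the bookkeeping in the second stage: verifying that the pullback construction on $C_1$ really does yield a crossed module (resp.\ reduced/stable $2$-module) satisfying all of axioms (1)--(9) with the transported action and bracket, and in particular that the bracket on $C_1''$ is well-defined and controls commutators in the free group $F$ correctly — the commutator term $[\phi x,\phi y]$ lifted to $\ker\phi$ must match $\partial''\grupo{x,y}''$. The coherence/strictification in the first stage is standard and I would cite it (\cite{cwm}, \cite{btc}, \cite{ccgsaa}) rather than reprove it; the genuinely new-looking bit is arranging freeness without disturbing the symmetry, and that is where I would spend the care, possibly by first observing that it suffices to treat the universal example or by invoking that the category of $2$-modules has enough $0$-free objects in a model-categorical sense and that every object receives a weak equivalence from such.
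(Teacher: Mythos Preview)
Your two-stage approach---strictify via coherence, then pull back along a map from a free group to obtain a $0$-free crossed module (or reduced/stable $2$-module)---is essentially the paper's proof; the only cosmetic differences are that the paper strictifies by invoking the $H^3(G,M)$ classification of categorical groups and the representability of every such class by a crossed module rather than Mac Lane coherence directly, and it takes the free group on a set of generators of $\pi_0(C_*)$ rather than a surjection onto all of $C_0$. Your identification of the bracket/action bookkeeping in the pullback as the main care point is apt---the paper dispatches it by citing \cite[Proposition~4.15]{2hg1}.
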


\begin{proof}
We know that tensor equivalence classes of categorical groups $\C{G}$ with fixed isomorphisms $\pi_0(\C{G})\cong G$, $\pi_1(\C{G})\cong M$ of groups and $G$-modules, respectively, are in bijection with cohomology classes $H^3(G,M)$ \cite[Chapitre 1 \S 1, Proposition 10]{grc}. We also know that any such class can be represented by a crossed module \cite{ctagIII}, therefore any categorical group is equivalent to a strict one. In addition a crossed module $C_*$ can be replaced by a $0$-free one $D_*$ via the pull-back construction
$$\xymatrix{D_1\ar[d]\ar[r]^\partial\ar@{}[rd]|{\text{pull}}&\grupo{E}\ar[d]\\C_1\ar[r]_\partial&C_0}$$
Here $E\subset C_0$ is a set of generators of $\pi_0(C_*)$, $\grupo{E}$ is the free group with basis $E$, and $D_0=\grupo{E}\r C_0$ is induced by the inclusion. This commutative square is a morphism of crossed modules which induces isomorphisms on homotopy groups, compare \cite[Proposition 4.15]{2hg1}, and therefore an equivalence between the corresponding categorical groups. Notice however that the inverse equivalence need not be strict.

The braided and symmetric case go along the same lines. If $\C{G}$ is braided or symmetric, we can strictify the underlying categorical group and then transfer the symmetry constraint along the equivalence. In this way we obtain an equivalent (braided, symmetric) strict categorical group.  The pull-back construction allows us again to replace any reduced or stable $2$-module by a $0$-free one,  compare \cite[Proposition 4.15]{2hg1}.
\end{proof}

When the source is $0$-free,  (symmetric) tensor functors can also be
strictified. We have not found any reference for the following lemma in the literature.

\begin{lem}\label{blah}
Let $\C G$ and $\C H$ be strict categorical groups where $\C G$ is
$0$-free. Then for any tensor functor $\phi:\C G\to \C H$ there exists
a strict tensor functor $\phi^s:\C G\to \C H$ together with a tensor  natural
transformation $\alpha\colon\phi^s\nattrans\phi$.

Moreover, if $\phi$ is a symmetric tensor functor between braided or symmetric categorical groups $\C G$ and $\C H$, then $\phi^s$  can be taken to be symmetric.
\end{lem}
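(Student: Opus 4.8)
The plan is to carry out the whole argument in the language of crossed modules, via the equivalence of Proposition~\ref{anequivalence}. Write $\C{G}$ as the strict categorical group of a crossed module $\partial\colon C_{1}\r C_{0}$ with $C_{0}$ free on a set $E$ of generators, and $\C{H}$ as that of $\partial\colon D_{1}\r D_{0}$. Recall that the objects of $\C{H}$ are the elements of $D_{0}$ and a morphism is a pair $(d_{0},d_{1})\colon d_{0}+\partial(d_{1})\r d_{0}$. In these terms a tensor functor $\phi\colon\C{G}\r\C{H}$ consists of a function $\phi_{0}\colon C_{0}\r D_{0}$ on objects, which need \emph{not} be a homomorphism, a homomorphism $\phi_{1}\colon C_{1}\r D_{1}$ recording the effect of $\phi$ on morphisms, and the comparison maps $\mathrm{mult}_{x,y}$, which I encode as a normalized $2$-cochain $\mathbf{c}\colon C_{0}\times C_{0}\r D_{1}$ with $\partial\mathbf{c}(x,y)=-\phi_{0}(x+y)+\phi_{0}(x)+\phi_{0}(y)$. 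Compatibility of the comparison maps with the associativity constraint becomes a $\phi_{0}$-twisted, non-abelian $2$-cocycle identity for $\mathbf{c}$, while the unit normalization together with coherence of the duality isomorphisms $j_{x}$ determines $\mathbf{c}$ on inverses.

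First I would define $\phi^{s}$ on objects: since $C_{0}$ is free on $E$, let $\phi^{s}_{0}\colon C_{0}\r D_{0}$ be the unique group homomorphism with $\phi^{s}_{0}(e)=\phi_{0}(e)$ for every $e\in E$. Next I would produce the tensor natural transformation $\alpha\colon\phi^{s}\nattrans\phi$. Unravelling Proposition~\ref{anequivalence}, giving $\alpha$ amounts to a function $a\colon C_{0}\r D_{1}$ with $\alpha_{c_{0}}=(\phi_{0}(c_{0}),a(c_{0}))\colon\phi^{s}_{0}(c_{0})\r\phi_{0}(c_{0})$, and the requirement that $\alpha$ be a \emph{tensor} natural transformation relative to the strict functor $\phi^{s}$ (whose comparison maps are identities) reduces to the single identity
$$a(x+y)=\mathbf{c}(x,y)+a(x)^{\phi_{0}(y)}+a(y),$$
normalized by $a(e)=0$ for $e\in E$. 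I would construct such an $a$ by induction on the reduced-word length in the free group $C_{0}$: on a reduced word one builds $a$ up from the left using the displayed identity, where $a(e)=0$ and $a(-e)$ is forced by applying the identity to $e+(-e)=0$; the $2$-cocycle identity for $\mathbf{c}$ (together with the duality coherence governing $\mathbf{c}(e,-e)$) is exactly what guarantees that the outcome is independent of the chosen word and that the displayed identity then holds for \emph{all} $x,y$. Conceptually this says that the $\phi_{0}$-twisted non-abelian $2$-cocycle $\mathbf{c}$ becomes trivial after restriction to the free group $C_{0}$; this is the single place where $0$-freeness is used.

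With $\alpha$ in hand, I would extend $\phi^{s}$ to morphisms by $\phi^{s}(g)=\alpha_{y}^{-1}\circ\phi(g)\circ\alpha_{x}$ for $g\colon x\r y$. This is automatically a functor and makes $\alpha$ a natural transformation $\phi^{s}\nattrans\phi$; and a short formal computation --- substituting naturality of the maps $\mathrm{mult}_{x,y}$ and the identity $\alpha_{x\otimes y}=\mathrm{mult}_{x,y}\circ(\alpha_{x}\otimes\alpha_{y})$ built into $\alpha$, then using functoriality of $\otimes$ --- yields $\phi^{s}(g\otimes h)=\phi^{s}(g)\otimes\phi^{s}(h)$. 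Since the object part of $\phi^{s}$ is a homomorphism and its comparison maps are identities, the associativity and unit coherences are trivial in the strict setting, so $\phi^{s}$ is a genuine strict tensor functor and $\alpha$ a tensor natural transformation. For the last assertion, suppose $\C{G}$ and $\C{H}$ are braided (or symmetric) and $\phi$ is symmetric; I would check that the \emph{same} $\phi^{s}$ is symmetric. Comparing $\phi^{s}(\comm{x}{y}{\C{G}})$ and $\comm{\phi^{s}(x)}{\phi^{s}(y)}{\C{H}}$ as morphisms $\phi^{s}(x\otimes y)\r\phi^{s}(y\otimes x)$ and expanding $\phi^{s}(-)=\alpha^{-1}\circ\phi(-)\circ\alpha$, the identity follows from the symmetry of $\phi$, the naturality of the braidings, and the tensor-naturality of $\alpha$; no new choices are needed.

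The step I expect to be the main obstacle is the construction of $a$, that is, verifying that the inductive definition over the free group $C_{0}$ is well posed. It is a bookkeeping argument with an action-twisted, non-abelian $2$-cocycle, made delicate by the presence of inverses of generators and by the non-commutativity of $D_{0}$ and $D_{1}$; essentially all the coherence content of the notion of a (symmetric) tensor functor is consumed here, whereas the passage to morphisms, the strictness, and the symmetric refinement are then purely formal.
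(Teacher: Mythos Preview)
Your proposal is correct and follows essentially the same route as the paper: define $\phi^{s}$ on objects as the homomorphism extending $\phi$ on a free basis, build $\alpha$ by induction on reduced word length so that $\alpha_{x\otimes y}=\hpp{x}{y}{\phi}\circ(\alpha_{x}\otimes\alpha_{y})$ holds, define $\phi^{s}$ on morphisms by conjugation with $\alpha$, and then verify tensor and symmetric compatibility. The only difference is packaging: you translate everything into crossed-module and twisted $2$-cocycle language via Proposition~\ref{anequivalence}, whereas the paper carries out the identical induction and verifications directly with categorical commutative diagrams; your cocycle identity $a(x+y)=\mathbf{c}(x,y)+a(x)^{\phi_{0}(y)}+a(y)$ is exactly the paper's defining square for $\alpha(x\otimes b)$ unwound. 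One small caution: a non-strict tensor functor does not literally give a crossed-module morphism, so your description of ``a homomorphism $\phi_{1}\colon C_{1}\r D_{1}$'' is slightly imprecise---but this is harmless, since you never use $\phi_{1}$ directly and instead define $\phi^{s}$ on morphisms by $\alpha^{-1}\phi(-)\alpha$, just as the paper does.
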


\begin{proof}
Suppose $\ob(\C G)$
is free on a set $B$, and define $\phi^s:\ob(\C G)\to\ob(\C H)$ to be
the unique group homomorphism with $\phi^s(b)=\phi(b)$ for $b\in
B$. The transformation $\alpha:\phi^s\nattrans\phi$ is defined
on the neutral element $I_{\C G}$, elements $b\in B$ and products
$b\otimes b'$, for $b,b'\in  B$, as follows:
\begin{align*}
\alpha(I_{\C G})&=\quad\xymatrix{I_{\C H}\ar[r]^-{\up\phi}&\phi(I_{\C G})},\\
\alpha(b)&=\quad\xymatrix{\phi(b)\ar[r]^-{1}&\phi(b)},\\
\alpha(b\otimes b')&=\quad\xymatrix{\phi(b)\otimes\phi(b')\ar[r]^-{\hp b{b'}\phi}&\phi(b\otimes b')}.
\end{align*}
In general $\alpha$ is defined on objects by induction on the reduced word length in the free group, by the following commutative diagram
$$\xymatrix@l{\phi(x\otimes b)\ar@{<-}[d]_{\alpha(x\otimes b)}
\ar@{<-}[r]^-{\hp xb\phi}&\phi(x)\otimes\phi(b)
\ar@{<-}[d]^{\alpha(x)\otimes1} \\
\phi^s(x\otimes b)\ar@{<-}[r]_-=&\phi^s(x)\otimes\phi^s(b)
}$$
This diagram defines $\alpha(x\otimes b)$ from $\alpha(x)$ provided the last letter in the reduced word $x$ is not $b^{-1}$. At the same, if $x=y\otimes b^{-1}$ is a reduced word, it defines $\alpha(y\otimes b^{-1})$ from $\alpha(y)=\alpha(x\otimes b)$. 
Notice also that this diagram is one case of the condition that $\alpha$ is a
tensor natural transformation. The condition is verified in general using
induction (on word length of $y$) and the following commutative diagram
$$\xymatrix@l@C=11pt{
\phi(x\otimes y\otimes b)\ar@{<-}[ddd]|{\alpha(x\otimes y\otimes
  b)}\ar@{<-}[dr]|{\hp{x\otimes y}b\phi}
\ar@{<-}[rrr]^-{\hp x{y\otimes b}\phi}
&&&\phi(x)\otimes\phi(y\otimes b)\ar@{<-}[dl]|{1\otimes\hp yb\phi}
\ar@{<-}[ddd]|{\alpha(x)\otimes \alpha(y\otimes b)} \\
&
\phi(x\otimes y)\otimes \phi(b)\ar@{<-}[d]_{\alpha(x\otimes y)\otimes1}
\ar@{<-}[r]^-{
\begin{array}{c}
{}\vspace{-10pt}\\
\scriptstyle\hp xy\phi\otimes 1
\end{array}
}&\phi(x)\otimes\phi(y)\otimes \phi(b)
\ar@{<-}[d]^{\alpha(x)\otimes \alpha(y)\otimes1}
\\
&\phi^s(x\otimes y)\otimes\phi^s(b)\ar@{<-}[r]_-=&\phi^s(x)\otimes\phi^s(y)\otimes\phi^s(b)
\\
\phi^s(x\otimes y\otimes b)\ar@{<-}[ru]_-=\ar@{<-}[rrr]_-=&&&
\phi^s(x)\otimes\phi^s(y\otimes b)\ar@{<-}[lu]^-=
}$$
Now $\phi^s$ is defined on morphisms $f\colon x\to y$ by the
following commutative diagram:
$$\xymatrix{\phi^s(x)\ar[d]_{\alpha(x)}\ar[r]^-{\phi^s(f)}&\phi^s(y)\ar[d]^{\alpha(y)}\\
\phi(x)\ar[r]_-{\phi(f)}&\phi(y)
}
$$
This is just the naturality condition for $\alpha$.

The following diagram shows the functor $\phi^s$ so defined is a tensor functor:
$$
\xymatrix@l{
\phi^s(y\otimes y') \ar@{<-}[rrr]^-=
\ar@{<-}[ddd]_{\phi^s(f\otimes f')}    &&& \phi^s(y)\otimes\phi^s(y')
                               \ar@{<-}[ddd]^{\phi^s(f)\otimes\phi^s(f')}  \\
&\phi(y\otimes y')\ar@{<-}[d]_{\phi(f\otimes f')}\ar@{<-}[ul]^{\alpha(y\otimes  y')}
\ar@{<-}[r]^-{\hp y{y'}\phi}
&\phi(y)\otimes\phi(y')\ar@{<-}[d]^{\phi(f)\otimes\phi(f')}\ar@{<-}[ur]_{\alpha(y)\otimes\alpha(y')\quad}&\\
&\phi(x\otimes x')\ar@{<-}[dl]_{\alpha(x\otimes
  x')}\ar@{<-}[r]_-{\hp x{x'}\phi}
&\phi(x)\otimes\phi(x')\ar@{<-}[dr]^{\alpha(x)\otimes \alpha(x')\quad}&\\
\phi^s(x\otimes x') \ar@{<-}[rrr]_-=   &&& \phi^s(x)\otimes\phi^s(x')
}
$$
Finally, we note
that if $\phi$ is symmetric then so is $\phi^s$, by the following
commutative diagram:
$$
\xymatrix@l{
\phi^s(x\otimes y) \ar@{<-}[rrr]^-{\phi^s(\comm{y}{x}{\C P})}
\ar@{<-}[ddd]_=                                      &&& \phi^s(y\otimes x)
                                                    \ar@{<-}[ddd]^=  \\
&\phi(x\otimes y)\ar@{<-}[lu]^{\alpha(x\otimes y)}\ar@{<-}[r]^{\phi(\com)}\ar@{<-}[d]_{\hp{x}{y}{}}&\phi(y\otimes x)\ar@{<-}[d]^{\hp{y}{x}{}}\ar@{<-}[ru]_{\alpha(y\otimes x)\quad}\\
&\phi(x)\otimes\phi(y)\ar@{<-}[ld]_{\alpha(x)\otimes\alpha(y)}\ar@{<-}[r]_{\com}&\phi(y)\otimes\phi(x)\ar@{<-}[rd]^{\alpha(y)\otimes\alpha(x) \quad}\\
\phi^s(x)\otimes\phi^s(y) \ar@{<-}[rrr]_-{\comm{\phi^s(y)}{\phi^s(x)}{\C Q}}
                                           &&& \phi^s(y)\otimes\phi^s(x)
}
$$
\end{proof}

Now Theorem \ref{strictification} follows from Lemmas \ref{strictificationobjects} and \ref{blah}.

\begin{prop}
The reflection $2$-functor in \cite[Lemma 4.18]{1tk} induces  a weak equivalence between the $2$-categories of $0$-free stable $2$-modules and $0$-free stable quadratic modules.
\end{prop}

This follows from \cite[Remark 4.21]{1tk}. The following result is a combination of this last result, Proposition \ref{anequivalence}, and Theorem \ref{strictification}.

\begin{cor}\label{pg0sq}
The $2$-functor $\Gamma$ in Definition \ref{gammatron} induces a weak equivalence between the $2$-categories of $0$-free stable quadratic modules and  \Picardgroupoids.
\end{cor}

\section{A unified approach to determinant functors}\label{universaldet}

The motto of category theory is `one proof replaces many'. With this philosophy in mind, in this section we develop an abstract theory of determinant functors encapsulating all examples in Section \ref{sec1}. In this context, we show the existence of universal determinant functors. We explicitly construct the target, i.e.~the category of virtual objects, as the \Picardgroupoid\ associated to a stable quadratic module defined by a presentation. These results give our main theorems, stated in Section~\ref{sec1}, as corollaries.

The techniques of this section are simplicial. In fact, we only need the low-dimensional part of certain simplicial categories. The strictification results of the previous section are crucial to make the proof of our main results as short as possible. Another advantage is our simple construction of the category of virtual objects.

\subsection{Determinant functors for $S_\bullet$-categories}\label{unified}


Most $K$-theories are defined via a simplicial category, similar to Waldhausen's $S_{\bullet}$ construction. We now define determinant functors for such simplicial categories. This definition generalizes all notions of determinant functors introduced in Section \ref{sec1}.

\begin{defn}\label{detsimpcat}
An \emph{$S_{\bullet}$-category}  $\C{C}_\bullet$ is a simplicial category

\medskip

$$\xymatrix@C=40pt{\ar@{.}[r]&\C{C}_3\ar@<-1.5ex>[r]_{d_3}\ar@<-.5ex>[r]|<(.6){d_2}\ar@<.5ex>[r]|<(.3){d_1}\ar@<1.5ex>[r]^{d_0}&\C{C}_2\ar@<-1ex>[r]_{d_2}\ar[r]|{d_1}\ar@<1ex>[r]^{d_0}&\C{C}_1\ar@<-.5ex>[r]_{d_1}\ar@<.5ex>[r]^{d_0}\ar@/_20pt/[l]|{s_1}\ar@<-1ex>@/_20pt/[l]_{s_0}&{*},\ar@/_20pt/[l]_{s_0}}$$
such that $\C{C}_0={*}$ is the terminal category, with only one object $*$ and one
morphism (the identity), $\C{C}_n$ has finite coproducts for all $n\geq 0$, and faces and degeneracies preserve coproducts. In particular $s_{0}^{n}(*)$ is initial in $\C C_{n}$ for all $n\geq 0$.
Moreover, $\C{C}_\bullet$ is endowed with a simplicial  subcategory $\operatorname{we}\C{C}_\bullet$ containing all isomorphisms $\operatorname{iso}\C{C}_\bullet\subset \operatorname{we}\C{C}_\bullet$, whose morphisms are called \emph{weak equivalences}. Finite coproducts of weak equivalences are required to be weak equivalences. 

Let $\C P$ be a \Picardgroupoid. A \emph{determinant functor}   $\det\colon\C{C}_\bullet\r \C P$ consists of a functor,   $$\det\colon\operatorname{we}\C{C}_1\To \C{P},$$ together with
\emph{additivity data}: for any object $\Delta$ in $\C{C}_2$, a morphism in $\C P$,
$$\det(\Delta)\colon \det(d_0\Delta)\otimes\det(d_2\Delta)\To  \det(d_1\Delta),$$
natural with respect to morphisms in $\operatorname{we}\C{C}_2$.
The following two axioms must be satisfied.

\begin{enumerate}

\item \emph{Associativity}: Let $\Theta$ be an object in $\C{C}_3$. The following diagram in $\C P$ commutes,
$$\xymatrix@C=-20pt{&\det(d_1d_2\Theta)&\\
\det(d_0d_1\Theta)\otimes\det(d_1d_3\Theta)\ar[ru]^{\det(d_1\Theta)}&&\det(d_0d_2\Theta)\otimes\det(d_2d_3\Theta)\ar[lu]_{\det(d_2\Theta)}\\
\det(d_0d_1\Theta)\otimes(\det(d_0d_3\Theta)\otimes\det(d_2d_3\Theta))\ar[u]^{1\otimes\det(d_3\Theta)}\ar[rr]_{\ass}&&(\det(d_0d_3\Theta)\otimes\det(d_0d_3\Theta))\otimes\det(d_2d_3\Theta)\ar[u]_{\det(d_0\Theta)\otimes1}}$$

\item \emph{Commutativity:} given a coproduct $X\sqcup Y$ of two objects $X$ and $Y$ in $\C{C}_1$ 
the following triangle commutes,
$$\xymatrix@C=0pt{&\det(\X\sqcup \Y)\ar@{<-}[rd]^-{\quad\det(s_0\X\sqcup s_1\Y)}\ar@{<-}[ld]_-{\det(s_1\X\sqcup s_0\Y)\quad}&\\\det(\Y)\otimes\det(\X)\ar[rr]_-{\com}&&\det(\X)\otimes\det(\Y)}$$
\end{enumerate}
\end{defn}

\begin{rem}\label{evenless}
Notice that in the previous definition we do not use all the structure of $\C{C}_\bullet$ but only its $3$-truncation. Actually even less, only the piece of $\C{C}_\bullet$ depicted in the diagram at the beginning of Definition \ref{detsimpcat}. Moreover, we do not use all the structure of that diagram, but just the coproduct operation in $\operatorname{we}\C{C}_1$ and $\operatorname{we}\C{C}_2$, the category structure of $\operatorname{we}\C{C}_1$, the underlying graph of $\operatorname{we}\C{C}_2$, and the set of objects of $\C{C}_3$. This can be illustrated by the following diagram, 
$$\xy
(0,0)* {\bullet},
(10,0)* {\bullet},
(20,0)* {\bullet},
(10,10)* {\bullet},
(20,10)* {\bullet},
(20,20)* {\bullet},
(0,10)* {\circ},
(0,20)* {\circ},
(10,20)* {\circ},
(-10,10)* {\cdots},
(30,0)*[r]{*},
(30,10)*[r]{*},
(30,20)*[r]{*},
(40,0)*[r]{\text{objects}},
(40,10)*[r]{\text{morphisms}},
(40,20)*[r]{\text{composition}},
(2,1.5);(8,1.5) **\dir{-} *\dir{>},
(2,.5);(8,.5) **\dir{-} *\dir{>},
(2,-1.5);(8,-1.5) **\dir{-} *\dir{>},
(2,-.5);(8,-.5) **\dir{-} *\dir{>},
(12,1);(18,1) **\dir{-} *\dir{>},
(12,0);(18,0) **\dir{-} *\dir{>},
(12,-1);(18,-1) **\dir{-} *\dir{>},
(12,11);(18,11) **\dir{-} *\dir{>},
(12,10);(18,10) **\dir{-} *\dir{>},
(12,9);(18,9) **\dir{-} *\dir{>},
(19,18);(19,12) **\dir{-} *\dir{>},
(20,18);(20,12) **\dir{-} *\dir{>},
(21,18);(21,12) **\dir{-} *\dir{>},
(19.5,8);(19.5,2) **\dir{-} *\dir{>},
(20.5,8);(20.5,2) **\dir{-} *\dir{>},
(9.5,8);(9.5,2) **\dir{-} *\dir{>},
(10.5,8);(10.5,2) **\dir{-} *\dir{>},
(19,-2);(11,-2) **\crv{(15,-6)} ?>*\dir{>},
(20.5,-2);(9.5,-2) **\crv{(15,-8)} ?>*\dir{>},
\endxy
$$
\end{rem}

\begin{exm}\label{mainexm}
We now see how the determinant functors presented in
Section 1 are covered by our unified approach. Weak equivalences  are isomorphisms in all $S_{\bullet}$-categories defined  below, except for the first one. We need a distinguished zero object for the definition of degeneracies. This is not a real problem, since all zero objects are canonically isomorphic.

\begin{enumerate}

\renewcommand{\X}{X}
\renewcommand{\Y}{Y}
\renewcommand{\Z}{Z}

\item Determinant functors on a Waldhausen category $\C{W}$ coincide with determinant functors on Waldhausen's  $S_\bullet(\C{W})$ \cite{akttsI, akts}. 
This follows from the fact that $S_1(\C{W})$ is just $\C{W}$, $S_2(\C{W})$
is the category of cofiber sequences in $\C{W}$, $S_3(\C{W})$ is the
category of staircase diagrams in $\C{W}$, such as \eqref{stair}, and the non-trivial faces and degeneracies in low dimensions are,
\begin{equation*}
d_i(\X\into \Y\onto C^{f})=\left\{\begin{array}{ll}
C^{f},& i=0;\\
\Y,& i=1;\\
\X,& i=2;
\end{array}\right.
\end{equation*}
\begin{equation*}
s_i(\X)=\left\{\begin{array}{ll}
0\into \X\st{1}\onto \X,& i=0;\\
\X\st{1}\into \X\onto 0,& i=1;\\
\end{array}\right.
\end{equation*}
\begin{equation*}
d_i(\ref{stair})=\left\{\begin{array}{ll}
C^{f}\into C^{gf}\onto C^{g},& i=0;\\
\Y\into \Z\onto C^{g},& i=1;\\
\X\into \Z\onto C^{gf},& i=2;\\
\X\into \Y\onto C^{f},& i=3.
\end{array}\right.
\end{equation*}

\item  From this description of the low-dimensional part of $S_\bullet(\C{W})$, it also follows that derived determinant functors on a Waldhausen category $\C{W}$ coincide with determinant functors on  ${\ho S_\bullet(\C{W})}$. 



\renewcommand{\X}{X}
\renewcommand{\Y}{Y}
\renewcommand{\Z}{Z}

\item Given a triangulated category $\C{T}$ we can consider the $3$-truncated $S_{\bullet}$-category $\bar{S}_{\leq 3}(^{b}\!\C{T})$,
$$\xymatrix@C=40pt{\{\text{octahedra}\}\ar@<-1.5ex>[r]_-{d_3}\ar@<-.5ex>[r]|<(.35){d_2}\ar@<.5ex>[r]|<(.15){d_1}\ar@<1.5ex>[r]^-{d_0}
&\ar@/_20pt/[l]|<(.35){s_2}\ar@<-1ex>@/_20pt/[l]|<(.55){s_1}\ar@<-2ex>@/_20pt/[l]_{s_0}
\left\{
\!\!\!{\begin{array}{c}
\text{\exact} \\
\text{triangles}
\end{array}}\!\!\!
\right\}\ar@<-1ex>[r]_-{d_2}\ar[r]|-{d_1}\ar@<1ex>[r]^-{d_0}&\C{T}\ar@<-.5ex>[r]_{d_1}\ar@<.5ex>[r]^{d_0}\ar@/_20pt/[l]|{s_1}\ar@<-1ex>@/_20pt/[l]_{s_0}&\{0\},\ar@/_20pt/[l]_{s_0}}$$
with faces and degeneracies
\begin{equation*}
d_i(X\st{f}\r Y\r \cone{f}\r\Sigma X)=
\left\{\begin{array}{ll}
\cone{f},& i=0;\\
Y,& i=1;\\
X,& i=2;
\end{array}\right.
\end{equation*}
\begin{equation*}
s_i(\X)=\left\{\begin{array}{ll}
0\r \X\st{1}\r \X\r 0,& i=0;\\
\X\st{1}\r \X\r 0\r\Sigma \X,& i=1;
\end{array}\right.
\end{equation*}
\begin{equation*}
d_i(\ref{octa})=\left\{\begin{array}{ll}
\cone{f}\r \cone{gf}\r \cone{g}\r\Sigma \cone{f},& i=0;\\
Y\st{g}\r Z\r \cone{g}\r \Sigma Y,& i=1;\\
X\st{gf}\r Z\r \cone{gf}\r \Sigma X,& i=2;\\
X\st{f}\r Y\r \cone{f}\r\Sigma X,& i=3.
\end{array}\right.
\end{equation*}
The degeneracies $s_i(X\st{f}\r Y\r \cone{f}\r\Sigma X)$, $i=0,1,2$, are
defined as the unique octahedra with the faces imposed by the simplicial identities. 

Breuning determinant functors on $\C{T}$ are the same as determinant functors on $\bar{S}_{\leq 3}(^{b}\!\C{T})$.


\item We can restrict ourselves to special octahedra,
$$\xymatrix@C=40pt{
\left\{
\!\!\!{\begin{array}{c}
\text{special}\\ 
\text{octahedra}
\end{array}}\!\!\!
\right\}\ar@<-1.5ex>[r]_-{d_3}\ar@<-.5ex>[r]|<(.35){d_2}\ar@<.5ex>[r]|<(.15){d_1}\ar@<1.5ex>[r]^-{d_0}
&\ar@/_20pt/[l]|<(.35){s_2}\ar@<-1ex>@/_20pt/[l]|<(.55){s_1}\ar@<-2ex>@/_20pt/[l]_{s_0}
\left\{
\!\!\!{\begin{array}{c}
\text{\exact} \\
\text{triangles}
\end{array}}\!\!\!
\right\}\ar@<-1ex>[r]_-{d_2}\ar[r]|-{d_1}\ar@<1ex>[r]^-{d_0}&\C{T}\ar@<-.5ex>[r]_{d_1}\ar@<.5ex>[r]^{d_0}\ar@/_20pt/[l]|{s_1}\ar@<-1ex>@/_20pt/[l]_{s_0}&\{0\}.\ar@/_20pt/[l]_{s_0}}$$
Neeman's ${S}_\bullet(^{d}\!\C{T})$ \cite{kttcneeman} is the simplicial set of objects of an $S_{\bullet}$-category $\bar{S}_{\bullet}(^{d}\!\C{T})$ whose $3$-truncation is this one. Determinant functors on ${\bar{S}_\bullet(^{d}\!\C{T})}$ are special determinant functors on $\C{T}$. 

\item We can also consider virtual triangles and octahedra  instead,
$$\xymatrix@C=40pt{
\left\{
\!\!\!{\begin{array}{c}
\text{virtual}\\ 
\text{octahedra}
\end{array}}\!\!\!
\right\}\ar@<-1.5ex>[r]_-{d_3}\ar@<-.5ex>[r]|<(.35){d_2}\ar@<.5ex>[r]|<(.15){d_1}\ar@<1.5ex>[r]^-{d_0}
&\ar@/_20pt/[l]|<(.35){s_2}\ar@<-1ex>@/_20pt/[l]|<(.55){s_1}\ar@<-2ex>@/_20pt/[l]_{s_0}
\left\{
\!\!\!{\begin{array}{c}
\text{virtual}\\ 
\text{triangles}
\end{array}}\!\!\!
\right\}\ar@<-1ex>[r]_-{d_2}\ar[r]|-{d_1}\ar@<1ex>[r]^-{d_0}&\C{T}\ar@<-.5ex>[r]_{d_1}\ar@<.5ex>[r]^{d_0}\ar@/_20pt/[l]|{s_1}\ar@<-1ex>@/_20pt/[l]_{s_0}&\{0\}.\ar@/_20pt/[l]_{s_0}}$$
Neeman's ${S}_\bullet(^{v}\!\C{T})$ \cite{kttcneeman} is the simplicial set of objects of an $S_{\bullet}$-category $\bar{S}_{\bullet}(^{v}\!\C{T})$ whose $3$-truncation is this one. Determinant functors on ${\bar{S}_\bullet(^{v}\!\C{T})}$ are virtual determinant functors on $\C{T}$.

\item  Given a strongly triangulated category $\C{T}_{\infty}$ we consider 
$$\xymatrix@C=40pt{
\left\{
\!\!\!{\begin{array}{c}
\text{\exact} \\
\text{octahedra}
\end{array}}\!\!\!
\right\}
\ar@<-1.5ex>[r]_-{d_3}\ar@<-.5ex>[r]|<(.35){d_2}\ar@<.5ex>[r]|<(.15){d_1}\ar@<1.5ex>[r]^-{d_0}
&\ar@/_20pt/[l]|<(.35){s_2}\ar@<-1ex>@/_20pt/[l]|<(.55){s_1}\ar@<-2ex>@/_20pt/[l]_{s_0}
\left\{
\!\!\!{\begin{array}{c}
\text{\exact} \\
\text{triangles}
\end{array}}\!\!\!
\right\}\ar@<-1ex>[r]_-{d_2}\ar[r]|-{d_1}\ar@<1ex>[r]^-{d_0}&\C{T}_{\infty}\ar@<-.5ex>[r]_{d_1}\ar@<.5ex>[r]^{d_0}\ar@/_20pt/[l]|{s_1}\ar@<-1ex>@/_20pt/[l]_{s_0}&\{0\},\ar@/_20pt/[l]_{s_0}}$$
Maltsiniotis's simplicial set $Q_\bullet(\C{T}_{\infty})$ \cite{cts} is the simplicial set of objects of an $S_{\bullet}$-category $\bar Q_\bullet(\C{T}_{\infty})$ whose $3$-truncation is this one. Determinant functors on $\bar{Q}_\bullet(\C{T}_{\infty})$ are determinant functors on $\C{T}_{\infty}$. 

\item 
For an abelian category $\C{A}$, Neeman defined   a simplicial set $S_\bullet(\operatorname{Gr}^b\!\!\C{A})$ which can be thickened to an $S_\bullet$-category $\bar S_\bullet(\operatorname{Gr}^b\!\!\C{A})$ as in the previous examples, see \cite{kttcneeman}. The $3$-truncation of $\bar S_\bullet(\operatorname{Gr}^b\!\!\C{A})$ looks as follows:
$$\xymatrix@C=30pt{
\left\{
\!\!\!{\begin{array}{c}
\text{diagrams} \\
\text{like \eqref{braid}}
\end{array}}\!\!\!
\right\}
\ar@<-1.5ex>[r]_-{d_3}\ar@<-.5ex>[r]|<(.25){d_2}\ar@<.5ex>[r]|<(.1){d_1}\ar@<1.5ex>[r]^-{d_0}
&\ar@/_20pt/[l]|<(.35){s_2}\ar@<-1ex>@/_20pt/[l]|<(.55){s_1}\ar@<-2ex>@/_20pt/[l]_{s_0}
\left\{
\!\!\!{\begin{array}{c}
\text{long exact} \\
\text{seq. like \eqref{les}}
\end{array}}\!\!\!
\right\}\ar@<-1ex>[r]_-{d_2}\ar[r]|-{d_1}\ar@<1ex>[r]^-{d_0}&
\left\{
\!\!\!{\begin{array}{c}
\text{bounded graded} \\
\text{objects in }\C{A}
\end{array}}\!\!\!
\right\}
\ar@<-.5ex>[r]_-{d_1}\ar@<.5ex>[r]^-{d_0}\ar@/_20pt/[l]|{s_1}\ar@<-1ex>@/_20pt/[l]_{s_0}&\{0\},\ar@/_20pt/[l]_{s_0}}$$
where
\begin{equation}\label{les}
\cdots\r \X_{n}\st{f_{n}}\To \Y_{n}\st{i_{n}}\To \cone{f}_{n}\st{q_{n}}\To \X_{n+1}\r \cdots.
\end{equation}
\begin{equation}\label{braid}
\begin{array}[m]{c}\\[-10mm]
\vcenter{
\xymatrix@!=15pt@ru
{
&\dots\\
\cone{f}_{n-1} \ar[r]_-{\bar{g}_{n-1}} \ar@(rd,ld)[dr]_{q_{n-1}^f}  &\ar[d]^{q_{n-1}^{gf}} \cone{gf}_{n-1} \ar[r]_-{\bar{f}_{n-1}}&
\ar[d]^{q^g_{n-1}} \cone{g}_{n-1} \ar@(ru,lu)[dr]
\\&
\X_{n} \ar[r]_-{f_{n}} \ar@(rd,ld)[dr] &
\ar[d]^{g_{n}} \Y_{n} \ar[r]_-{i^f_{n}}&
\ar[d]^{\bar{g}_{n}} \cone{f}_{n} \ar@(ru,lu)[dr]^{q^f_{n}}
\\&&
\Z_{n} \ar[r]_-{i^{gf}_{n}} \ar@(rd,ld)[dr]_{i^g_{n}} &\ar[d]^{\bar{f}_{n}}
\cone{gf}_{n}
\ar[r]_-{q^{gf}_{n}}&
\ar[d]^{f_{n+1}}
\X_{n+1}
\ar@(ru,lu)[dr]
\\&&&
\cone{g}_{n}
\ar[r]_-{q^g_{n}}
\ar@(rd,ld)[dr] &
\ar[d]^{i^f_{n+1}}
\Y_{n+1}
\ar[r]_-{g_{n+1}}&
\ar[d]^{i^{gf}_{n+1}}
\Z_{n+1}
\ar@(ru,lu)[dr]^{i^g_{n+1}}
\\&&&&
\cone{f}_{n+1}
\ar[r]_-{\bar{g}_{n+1}}
&
\cone{gf}_{n+1}
\ar[r]_-{\bar{f}_{n+1}}&
\cone{g}_{n+1}
\\&&&&&\dots
}}
\\[-44mm]
\end{array}
\end{equation}
Faces and degeneracies are defined by,
\begin{equation*}
d_i\eqref{les}=\left\{\begin{array}{ll}
\cone{f}_*,& i=0;\\
\Y_*,& i=1;\\
\X_*,& i=2;
\end{array}\right.
\end{equation*}
\begin{equation*}
s_i(\X)=\left\{\begin{array}{ll}
\cdots\r0\r \X_n\st{1}\r \X_n\r 0\r\cdots,& i=0;\\
\cdots\r \X_n\st{1}\r \X_n\r 0\r \X_{n+1}\r\cdots,& i=1;
\end{array}\right.
\end{equation*}
 \begin{equation*}
 d_i\eqref{braid}=\left\{\begin{array}{ll}
\cdots\r \cone{f}_{n}\st{\bar{g}_{n}}\To \cone{gf}_{n}\st{\bar{f}_{n}}\To C_{n}^{g}\st{i_{n+1}^fq_{n}^{g}}\To \cone{f}_{n+1}\r \cdots,& i=0;\\
 \cdots\r \Y_{n}\st{g_{n}}\To \Z_{n}\st{i_{n}^{g}}\To C_{n}^{g}\st{q_{n}^{g}}\To \Y_{n+1}\r \cdots,& i=1;\\
 \cdots\r \X_{n}\st{g_nf_{n}}\To \Z_{n}\st{i_{n}^{gf}}\To C_{n}^{gf}\st{q_{n}^{gf}}\To \X_{n+1}\r \cdots,& i=2;\\
 \cdots\r \X_{n}\st{f_{n}}\To \Y_{n}\st{i_{n}^{f}}\To C_{n}^{f}\st{q_{n}^{f}}\To \X_{n+1}\r \cdots,& i=3.
 \end{array}\right.
 \end{equation*}
A \emph{graded determinant functor} on $\C A$ is a determinant functor on  ${\bar{S}_\bullet(\operatorname{Gr}^b\!\!\C{A})}$.

\item The unified approach to determinant functors in Definition~\ref{detsimpcat} allows us to define determinant functors for \emph{triangulated derivators}, and more generally for \emph{right pointed derivators}, using the terminology of \cite{ciscd}. Notice that these are called \emph{left pointed derivators} in \cite{sdckt1, sdckt2}.

Let $\mathbf{Cat}$ be  the $2$-category of small categories and $\mathbf{Dir}_{f}\subset \mathbf{Cat}$ the full sub-$2$-category of directed finite  categories, i.e.~those categories whose nerve has a finite number of non-degenerate simplices, e.g.~finite posets.
 The canonical example of derivator is defined from a Waldhausen category $\C{W}$ with cylinders whose weak equivalences satisfy the two-out-of-three axiom, for instance $\C{W}=C^b(\C{E})$. It is the contravariant $2$-functor 
\begin{align*}
\mathbb{D}_{\C{W}}\colon\mathbf{Dir}_{f}^\op&\To \mathbf{Cat},\\
J&\;\mapsto\;\ho(\C{W}^J),
\end{align*}
which takes a directed finite category $J$ to the homotopy category of $J$-indexed diagrams in~$\C{W}$. 
This derivator is regarded as an enhancement of the homotopy category, which arises as a special value of this $2$-functor $\mathbb{D}_{\C{W}}(*)=\ho(\C W)$.

For $\C{W}=C^b(\C{E})$, $\mathbb{D}^b(\C{E})=\mathbb{D}_{C^b(\C{E})}$ is
\begin{align*}
\mathbb{D}^b(\C{E})\colon\mathbf{Dir}_{f}^\op&\To \mathbf{Cat},\\
J&\;\mapsto\;D^b(\C{E}^J).
\end{align*}
In this case, $\mathbb{D}^b(\C{E})(*)=D^b(\C{E})$  is the bounded derived category of $\C E$.

In general, a right pointed derivator is a $2$-functor $\mathbb{D}\colon\mathbf{Dir}_{f}^\op\r\mathbf{Cat}$ satisfying   the formal properties of $\mathbb{D}_{\C{W}}$. Triangulated derivators are modeled after $\mathbb{D}^b(\C{E})$.

Garkusha defined in  \cite{sdckt1} 
a simplicial category $S_\bullet\mathbb{D}$. It is only reduced up to the existence of distinct zero objects. We can of course identify different zero objects through the unique isomorphisms between them. This transforms $S_\bullet\mathbb{D}$ into an $S_\bullet$-category.
We define a determinant functor on $\mathbb{D}$ to be a determinant functor on ${S_\bullet(\mathbb{D})}$. The interested reader may work out the explicit definition of determinant functors for right pointed derivators along the lines of Section \ref{sec1}. 


\end{enumerate}
\end{exm}

\begin{defn}
An \emph{$S_{\bullet}$-functor} is  a simplicial functor $f_\bullet\colon\C{C}_\bullet\r\C{C}'_\bullet$ between $S_{\bullet}$-categories 
preserving weak equivalences and coproducts. 
\end{defn}

\begin{rem}
Let  $f_\bullet\colon\C{C}_\bullet\r\C{C}'_\bullet$ be an $S_{\bullet}$-functor and $\det'\colon\C{C}'_\bullet\r\C{P}$ a determinant functor. The composite $$\det=\det'\circ f_1\colon\C{C}_\bullet\To \C{P}$$ is a determinant functor on $\C{C}_\bullet$ with $\det(\Delta)=\det'(f_2(\Delta))$ for any object $\Delta$ in $\C{C}_2$. Actually, it is enough to have  a $3$-truncated $S_{\bullet}$-functor  $f_{\leq 3}\colon\C{C}_{\leq 3}\r\C{C}'_{\leq 3}$, and even less, compare Remark \ref{evenless}. 
\end{rem}

\begin{exm}\label{lasrelaciones}
The following are examples of ($3$-truncated) $S_{\bullet}$-functors.

\begin{enumerate} 
\item Weak equivalences in a Waldhausen category $\C{W}$ project to isomorphisms in the homotopy category, so we have an $S_{\bullet}$-functor 
$${S}_\bullet(\C{W})\To  \operatorname{Ho}{S}_\bullet(\C{W}).$$

\item In a triangulated category $\C{T}$, any \exact\ triangle is virtual, any special octahedron is an ordinary octahedron, and any ordinary octahedron is virtual. This gives rise to $3$-truncated $S_{\bullet}$-functors
$$\bar{S}_{\leq 3}(^{d}\!\C{T})\To  \bar{S}_{\leq 3}(^{b}\!\C{T})\To  \bar{S}_{\leq 3}(^{v}\!\C{T}),$$
which are fully faithful and injective on objects. 
Actually, the composite is the truncation of an honest $S_{\bullet}$-functor defined in  \cite{kttcneeman},
$$\bar{S}_\bullet(^{d}\!\C{T})\To  \bar{S}_\bullet(^{v}\!\C{T}),$$ 
also fully faithful and injective on objects.

\item If $\C{T}$ is a triangulated category with a $t$-structure, the inclusion of the heart $\C{A}\subset\C{T}$ induces an $S_{\bullet}$-functor ${S}_\bullet(\C{A})\To  \bar{S}_\bullet(^{d}\!\C{T})$  fully faithful and injective on objects, see  \cite{kttcneeman}. 

\item A strongly triangulated category $\C{T}_{\infty}$ has an underlying triangulated structure. \Exact\ octahedra in $\C T_{\infty}$ are also ordinary octahedra. Therefore we have a $3$-truncated $S_{\bullet}$-functor which is  fully faithful and injective on objects,
$$\bar{Q}_{\leq 3}(\C{T}_{\infty})\To \bar{S}_{\leq 3}(^{b}\!\C{T}_{\infty}).$$

\item If $\C W$ is a Waldhausen category with cylinders satisfying the two out of three axiom, there is an $S_{\bullet}$-functor \cite{sdckt2,malt},
$$\ho S_{\bullet}(\C W)\To S_{\bullet}(\mathbb{D}_{\C W}).$$

\item Maltsiniotis indicated in \cite{cts} how a triangulated derivator $\mathbb{D}$ induces a strongly triangulated  structure on $\mathbb{D}(*)$. There is an $S_{\bullet}$-functor
$$S_\bullet(\mathbb{D})\To  \bar{Q}_\bullet(\mathbb{D}(*))$$ defined by using the canonical evaluation functors from $\mathbb{D}(J)$ to the category of functors $J\r \mathbb{D}(*)$.

\item If $\C E$ is an exact category, the inclusion of complexes concentrated in degree $0$,  $\C E\subset C^{b}(\C E)$, induces an 
$S_{\bullet}$-functor $$ S_{\bullet}(\C E)\To S_{\bullet}(C^{b}(\C E)).$$

\item Let $\C E$ be again an exact category. 
The 
$S_{\bullet}$-functor $$ S_{\bullet}(C^{b}(\C E))\To 
\bar{Q}_{\bullet}(D^{b}(\C E))$$
given by (1), (5) and (6) 
can be directly described in terms of the canonical functor $C^{b}(\C E)\r D^{b}(\C E)$, compare \cite{cts}.
\item Let $\C A$ be an abelian category, the inclusion of objects concentrated in degree $0$ is an $S_\bullet$-functor ${{S}_\bullet(\C{A})}\r {\bar{S}_\bullet(\operatorname{Gr}^b\!\!\C{A})}$.
\end{enumerate}
\end{exm}

\subsection{The groupoid of determinant functors} In this section we introduce universal determinant functor for $S_{\bullet}$-categories.

\begin{defn}\label{detgrd}
Let  $\C{C}_\bullet$ be an $S_{\bullet}$-category and $\C P$ a \Picardcategory. 
Given determinant functors $\det,\det'\colon \C{C}_\bullet\r\C P$, a \emph{morphism} $f\colon\det\r\det'$ is a natural
transformation between the underlying functors $\det,\det'\colon\operatorname{we}\C{C}_1\r\C{P}$ compatible with
the additivity data, i.e.~given an object $\Delta$ in $\C{C}_2$, the following diagram commutes:
$$\xymatrix@l{\det'(d_1\Delta)\ar@{<-}[r]^-{\det'(\Delta)}\ar@{<-}[d]_{f(d_1\Delta)}&
\det'(d_0\Delta)\otimes\det'(d_2\Delta)\ar@{<-}[d]^{f(d_0\Delta)\otimes f(d_2\Delta)}\\
\det(d_1\Delta)\ar@{<-}[r]_-{\det(\Delta)}&
\det(d_0\Delta)\otimes\det(d_2\Delta)}$$
Notice that all these morphisms are invertible since $\C P$ is a groupoid. 
The resulting \emph{groupoid of determinant functors} is denoted by $\operatorname{Det}(\C{C}_\bullet,\C P)$. 
\end{defn}

\begin{rem}
The category $\operatorname{Det}(\C{C}_\bullet,\C P)$ is itself a \Picardcategory. The tensor structure is given as follows. For any determinant functors $\det$, $\det'$, $\det''$, any object $\X$ in $\C C_{1}$, any morphism  $\alpha$ in $\operatorname{we}\C{C}_1$, and  any object
$\Delta$ in $\C{C}_2$, we define:
\begin{align*}
(\det\otimes {\det}')(\X)&=\det(\X)\otimes{\det}'(\X),\\
(\det\otimes{\det}')(\alpha)&=\det(\alpha)\otimes{\det}'(\alpha),\\
(\det\otimes{\det}')(\Delta)&=(\det(\Delta)\otimes{\det}'(\Delta))\circ ( 1\otimes\com\otimes 1). 
\end{align*}
The unit object is the \emph{constant determinant functor}, which sends all objects in $\C C_{1}$ to the tensor unit $I$ of $\C P$, all weak equivalences in $\C C_{1}$ to the identity on $I$, and all objects in $\C C_{2}$ to the isomorphism $I\otimes I\cong I$. Associativity, commutativity and unit constraints are pointwise defined by the corresponding constraints in $\C P$. 
This structure has already been considered in \cite[Proposition 1.13]{dfec} for determinant functors on exact categories.
\end{rem}

\begin{defn}\label{unidoni}
Let $\C{C}_\bullet$ be an $S_\bullet$-category. The composition of a determinant functor and a symmetric tensor functor is again a determinant functor,
$$\xymatrix{\C C_\bullet\ar[r]^-{\det}& \C P\ar[r]^-{f}&\C P'.}$$
The underlying functor $f\circ\det\colon\we\,\C C_1\r\C P'$ is the usual composition, and additivity data $(f\circ\det)(\Delta)$  are defined by
$$\xymatrix@C=20pt{
f(\det(d_0\Delta))\otimes f(\det(d_2\Delta))\ar[r]^-{\hpp}
&f(\det(d_0\Delta)\otimes\det(d_2\Delta)) \ar[rr]^-{f(\det(\Delta))}&&f(\det(d_1\Delta)).}$$
Moreover, a tensor natural transformation $\alpha\colon f\rr g$ as in the following diagram 
$$\xymatrix{\C C_\bullet\ar[r]^-{\det}& \C P\ar@/^15pt/[r]^-{f}_{}="a"\ar@/_15pt/[r]_-{g}^{}="b"&\C P'\ar@{=>}"a";"b"^\alpha}$$
induces a morphism of determinant functors $\alpha\circ\det \colon f\circ\det\rr g\circ\det$ defined as
the usual horizontal composition of functors and natural transformations. This composite  is compatible with additivity data in the sense of Definition \ref{detgrd}. 

All this shows that the \Picardcategory\ $\operatorname{Det}(\C{C}_\bullet,\C P)$ is $2$-functorial in $\C P$, i.e.~it defines a $2$-functor from the $2$-category of \Picardcategories\ to the $2$-category of groupoids. Actually, the target can be taken to be the $2$-category of \Picardcategories\ again, along the lines of the previous remark. In particular, a determinant functor $\det\colon\C C_\bullet\r\C P$ gives rise to a functor
\begin{equation}\label{cdet}
-\circ\det\colon \hom_c^\otimes(\C{P},\C P')\To \operatorname{Det}(\C{C}_\bullet,\C P').
\end{equation}

A determinant functor  $\det\colon\C{C}_\bullet\r V(\C{C}_\bullet)$ is \emph{universal} if 
$$-\circ\det\colon \hom_c^\otimes(V(\C{C}_\bullet),\C P)\To \operatorname{Det}(\C{C}_\bullet,\C P)$$ 
is an equivalence of categories for any \Picardcategory\ $\C P$. In this situation, we say that $V(\C{C}_\bullet)$ is the \emph{category of virtual objects} of $\C C$. This \Picardcategory\ is well defined up to equivalence, since it represents the $2$-functor $\operatorname{Det}(\C{C}_\bullet,-)$.
\end{defn}

\begin{rem}
Universal determinant functors on $S_\bullet$-categories can also be characterized by a $2$-categorical universal property along the lines of Definition \ref{uniwald}. In particular, the various notions of universal determinant functor introduced in Section \ref{sec1} are consistent with Definition \ref{unidoni} and Example \ref{mainexm}. The more abstract approach in Definition \ref{unidoni} will be helpful in the proof of the existence of universal determinant functors.
\end{rem}

For determinant functors with values in strict  \Picardcategories\ it is convenient to introduce also the following notion.

\begin{defn}
A determinant functor $\det\colon \C{C}_\bullet\r \C P$ with values in a strict \Picardcategory\ $\C P$ is \emph{strict} if it satisfies
\begin{align*}
        \det(s_0(*))&=I,&
                \det(s_0^2(*))&=1_{I}.
\end{align*}

We denote by $\operatorname{Det}_s(\C{C}_\bullet,\C P)$ the full subcategory of $\operatorname{Det}(\C{C}_\bullet,\C P)$ whose objects are the strict determinant functors. 
\end{defn}

\begin{lem}\label{strictdets}
The inclusion $\operatorname{Det}_s(\C{C}_\bullet,\C P)\subset\operatorname{Det}(\C{C}_\bullet,\C P)$ is an equivalence, natural in the strict \Picardcategory\ $\C P$.
\end{lem}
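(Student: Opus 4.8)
The statement to prove is Lemma \ref{strictdets}: the inclusion $\operatorname{Det}_s(\C{C}_\bullet,\C P)\subset\operatorname{Det}(\C{C}_\bullet,\C P)$ of strict determinant functors into all determinant functors (with values in a fixed strict \Picardcategory\ $\C P$) is an equivalence of categories, naturally in $\C P$.

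The plan is to show the inclusion is fully faithful and essentially surjective. Full faithfulness is immediate since $\operatorname{Det}_s$ is by definition a \emph{full} subcategory, so there is nothing to check there. The content is essential surjectivity: given any determinant functor $\det\colon\C{C}_\bullet\r\C P$, I must produce a strict one $\det^s$ together with an isomorphism $\det\cong\det^s$ in $\operatorname{Det}(\C{C}_\bullet,\C P)$. The idea is to ``renormalize'' $\det$ by the single invertible morphism $u\colon\det(s_0^2(*))\r I$ in $\C P$ — here $s_0^2(*)$ is the initial (zero) object of $\C{C}_2$, whose faces $d_0,d_1,d_2$ all send it to the distinguished zero object $0$ of $\C{C}_1$, and $I$ is the unit of $\C P$. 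Concretely, I would leave the underlying functor on $\operatorname{we}\C{C}_1$ unchanged, $\det^s(\X)=\det(\X)$ on objects and morphisms, except possibly adjusting the value on the zero object; the essential point is that on $\Delta$ in $\C{C}_2$ the additivity morphism $\det^s(\Delta)$ is defined as $\det(\Delta)$ composed with the appropriate instances of $u$ and $u^{-1}$ in the factors $\det(d_0\Delta)\otimes\det(d_2\Delta)$ and $\det(d_1\Delta)$, using the degeneracy identities $s_1(\X)$, $s_0(\X)$ to identify $\det$ applied to a degenerate $2$-simplex on $\X$. Since in a strict \Picardcategory\ the unit isomorphisms are identities, tensoring by $I$ is literally the identity functor, so these renormalized additivity data will again satisfy associativity and commutativity (axioms (1) and (2)), and by construction $\det^s(s_0^2(*))=\operatorname{id}_I$.

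The next step is to exhibit the isomorphism $f\colon\det\Rightarrow\det^s$ in $\operatorname{Det}(\C{C}_\bullet,\C P)$: on each object $\X$ of $\operatorname{we}\C{C}_1$ it should be the canonical isomorphism determined by $u$ via the unique way of inserting $\det(s_0\X)=\det(s_0^2(*))$-type corrections — roughly $f(\X)=1$ everywhere except that the compatibility with additivity data forces a specific choice once one fixes the behaviour on the zero object. One then checks the compatibility square in Definition \ref{detgrd} commutes; this is a diagram chase using only the monoidal coherence of $\C P$ (strict, so the only non-trivial cells are the ones coming from $u$) and the simplicial identities among $d_i,s_j$ in dimensions $\le 2$. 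Finally, naturality in $\C P$: given a strict symmetric tensor functor $\C P\r\C Q$, the construction $\det\mapsto\det^s$ commutes with postcomposition, because a strict tensor functor sends $I_{\C P}$ to $I_{\C Q}$ strictly and preserves all the structure maps, so the renormalizing morphism $u$ is carried to the corresponding one in $\C Q$; this gives a natural transformation of inclusions, completing the proof.

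The main obstacle is bookkeeping rather than conceptual: one must verify that the renormalized $\det^s(\Delta)$ still satisfies the associativity pentagon-type diagram (axiom (1)) involving $\Theta\in\C{C}_3$ and the four faces $d_i\Theta$, and the commutativity triangle (axiom (2)) with the coproduct structure maps $s_0\X\sqcup s_1\Y$ and $s_1\X\sqcup s_0\Y$. The potential subtlety is making sure the corrections by $u$ attached to the various faces cancel coherently; this is exactly where strictness of $\C P$ is used — in a non-strict \Picardcategory\ one would have to track associativity and unit constraints as well, but here ``tensoring with $I$ is the identity'' trivializes those, so the cancellations are literal equalities. I expect each axiom to reduce, after inserting the definitions, to an identity of morphisms in $\C P$ that holds because the corresponding axiom for $\det$ holds together with the simplicial identities $d_i s_j = \cdots$ relating degenerate $2$- and $3$-simplices; the verification is routine but must be done carefully to confirm no spurious sign or twist (from the symmetry of $\C P$) survives.
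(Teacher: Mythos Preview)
Your proposal has a real gap at the key step. First, a type error: you write ``the single invertible morphism $u\colon\det(s_0^2(*))\to I$'', but $\det$ applied to an object $\Delta$ of $\C{C}_2$ is by definition the additivity \emph{morphism} $\det(d_0\Delta)\otimes\det(d_2\Delta)\to\det(d_1\Delta)$, not an object of $\C P$. So $\det(s_0^2(*))$ is already an isomorphism $\det(0)\otimes\det(0)\to\det(0)$ (writing $0=s_0(*)$), and ``$u\colon\det(s_0^2(*))\to I$'' does not parse.

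More substantively, you propose to leave the underlying functor on $\operatorname{we}\C{C}_1$ unchanged, setting $\det^s(\X)=\det(\X)$. But the strictness condition $\det^s(s_0^2(*))=\operatorname{id}_I$ is an equality of morphisms whose source and target are $\det^s(0)\otimes\det^s(0)$ and $\det^s(0)$; for it to equal $\operatorname{id}_I$ one needs $\det^s(0)=I$ on the nose, which a general determinant functor does not satisfy. Your parenthetical ``except possibly adjusting the value on the zero object'' is precisely where the work lies, and you have not said how to do this functorially nor checked that the resulting additivity datum at $s_0^2(*)$ is the identity rather than some nontrivial element of $\pi_1\C P$ (which it can be for the wrong choice of identification $\det(0)\cong I$).

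The paper's proof avoids this by modifying the underlying functor \emph{globally} rather than only at the zero object: it sets $D' = D\otimes D(s_0(*))^{-1}$, so that $D'(0)=D(0)\otimes D(0)^{-1}=I$ strictly in the strict \Picardcategory\ $\C P$. The additivity datum is then corrected using the degenerate simplex $s_1 d_2\Delta$, namely
\[
D'(\Delta)=\bigl(D(\Delta)\otimes 1_{D(0)^{-1}}\bigr)\circ\bigl(D(s_1 d_2\Delta)^{-1}\otimes 1_{D(0)^{-1}\otimes D'(d_0\Delta)}\bigr),
\]
and the comparison isomorphism $D\Rightarrow D'$ is $\X\mapsto D(s_1\X)\otimes 1_{D(0)^{-1}}$. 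Tensoring by a fixed object is automatically functorial, so the associativity and commutativity axioms for $D'$ follow from those for $D$ together with the simplicial identities, with no ad hoc transport along a single isomorphism.
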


\begin{proof}
Let $\det\colon \C{C}_\bullet\r\C{P}$ be a determinant functor. Then we
can define a strict determinant functor $\det'$ by
$$
\det'=\det\otimes \det(s_0(*))^{-1}\colon \operatorname{we}\C{C}_1\r \C{P}
$$
and by
$$
\det'(\Delta)=\big(\det(\Delta)\otimes1_{\det(s_0(*))^{-1}}\big)\circ
\big(\det(s_0 d_0\Delta)^{-1}\otimes 1_{\det(s_0(*))^{-1}\otimes
\det'(d_2\Delta)}\big)
$$
for objects $\Delta$ in $\C{C}_2$. Moreover,
$\X\mapsto \det(s_0 \X)\otimes 1_{\det(s_0(*))^{-1}}$ defines a morphism of
determinant functors  $\det\r\det'$.
\end{proof}

\subsection{The existence of universal determinant functors}

In this section we will show that universal determinant functors
always exist, which is our main result. 
We will actually construct universal determinant functors by using presentations of 
stable quadratic modules.

\begin{defn}\label{univtarget}
Let $\C{C}_\bullet$ be an $S_{\bullet}$-category. We define the stable quadratic module $\D{D}_*(\C{C}_\bullet)$ by generators,
\begin{enumerate}
\renewcommand{\labelenumi}{(G{\arabic{enumi}})}
\item $[\X]$ for any object in $\C{C}_1$, in dimension $0$,
\item $[\X\st{\sim}\r \X']$ for any weak equivalence in $\C{C}_1$, in dimension $1$,
\item $[\Delta]$ for any object in $\C{C}_2$, in dimension $1$,
\end{enumerate}
and relations,
\begin{enumerate}
\renewcommand{\labelenumi}{(R{\arabic{enumi}})}
\item $\partial[\X\st{\sim}\r \X']=-[\X']+[\X]$,
\item $\partial[\Delta]=-[d_1\Delta]+[d_0\Delta]+[d_2\Delta]$,
\item $[s_0(*)]=0$ for the degenerate object of $\C{C}_1$,
\item $[\X\st{1}\r \X]=0$, for all identity morphisms in $\C{C}_1$,
\item $[s_0\X]=0=[s_1\X]$ for any object $\X$ in $\C{C}_1$,
\item for any pair of composable weak equivalences $\X\st{\sim}\r \Y \st{\sim}\r \Z$ in $\C{C}_1$,
$$[\X\st{\sim}\r \Z]\;\;=\;\;[\Y\st{\sim}\r \Z]+[\X\st{\sim}\r \Y],$$
\item for any weak equivalence $\Phi\colon\Delta \st{\sim}\r \Delta'$ in $\C{C}_2$,
$$[d_2\Phi]+[d_0\Phi]^{[d_2\Delta]}\;\;=\;\;-[\Delta']+[d_1\Phi]+[\Delta],$$
\item for any object $\Theta$ in $\C{C}_3$,
$$[d_1\Theta]+[d_3\Theta]\;\;=\;\;[d_2\Theta]+[d_0\Theta]^{[d_2d_3\Theta]},$$
\item for any two objects $X$ and $Y$ in $\C{C}_1$,
$$\grupo{[\X],[\Y]}\;\;=\;\;-[s_0\X\sqcup s_1\Y]+[s_1\X\sqcup s_0\Y].$$
\end{enumerate}
\end{defn}

\begin{rem}\label{lessrel}
This is not a minimal presentation, compare~\cite[Remark 1.4]{k1wc}, but it is the most
intuitive. Relation (R4) follows from (R6). 
Relation (R3) follows from (R5),
$$0=\partial[s_{1}\X]=-[\X]+[\X]+[s_0(*)]=[s_0(*)].$$
Relation (R5) is equivalent to
imposing $[s_0^2(*)]=0$ for the degenerate object of $\C{C}_2$. Indeed, applying (R8) to $s_{0}^{2}\X$ and $s_{1}^{2}\X$, respectively, we obtain,
\begin{align*}
[s_{0}\X]+[s_0^2(*)]&=[s_{0}\X]+[s_{0}\X]^{[s_0(*)]},\\
[s_{1}\X]+[s_{1}\X]&=[s_{1}\X]+[s_0^2(*)]^{[\X]}.
\end{align*}
\end{rem}

\begin{rem}\label{lulu}

The stable quadratic module $\D{D}_*(\C{C}_\bullet)$ is functorial with respect to $S_{\bullet}$-functors,
\begin{align*}
\D{D}_0(f_{\bullet})\colon\D{D}_0(\C{C}_\bullet)&\To  \D{D}_0(\C{C}_\bullet'),&\D{D}_1(f_{\bullet})\colon\D{D}_1(\C{C}_\bullet)&\To  \D{D}_1(\C{C}_\bullet'),\\
[\X]&\;\mapsto \; [f_1(\X)],&[\phi\colon\X\st{\sim}\r \X']&\;\mapsto \; [f_{1}(\phi)\colon f_1(\X)\st{\sim}\r f_1(\X')],\\
&&[\Delta]&\;\mapsto \;[f_2(\Delta)].
\end{align*}
 Moreover, it is $2$-functorial with respect to simplicial natural weak equivalences 
 $\alpha_{\bullet}\colon f_{\bullet}\Rightarrow g_{\bullet}$ between $S_{\bullet}$-functors
$f_\bullet,g_{\bullet}\colon\C{C}_\bullet\r\C{C}'_\bullet$, i.e.~simplicial natural transformations taking values in the subcategories of weak equivalences,
\begin{align*}
\D{D}_*(\alpha_{\bullet})\colon\D{D}_0(\C{C}_\bullet)&\To  \D{D}_1(\C{C}_\bullet'),\\
[\X]&\;\mapsto [\alpha_{1}(\X)\colon f_{1}(\X)\st{\sim}\r g_{1}(\X)].
\end{align*}
Actually, it is $2$-functorial at the $3$-truncated level. 

If we evaluate $\D{D}_*(\C{C}_\bullet)$ at the $S_\bullet$-categories in Example \ref{mainexm} (1--6) we obtain the stable quadratic modules in Definition \ref{univtargetwald}.
\end{rem}

\begin{thm}\label{main1}
There is a universal determinant functor $\det\colon \C{C}_\bullet\r \Gamma\D{D}_*(\C{C}_\bullet)$ defined by 
\begin{itemize}
 \item $\det(\X)=[\X]$ for any object in $\C C_1$,
\item $\det(\X\st{\sim}\r \X')=([\X'],[\X\st{\sim}\r\X'])$ for any weak equivalence in $\C C_1$,
\item $\det(\Delta)=([d_1\Delta],[\Delta])$ for any object in $\C C_2$.
\end{itemize}
This determinant functor is strict. 
Moreover, for any stable quadratic module $C_{*}$, the functor
\begin{align*}
\hom(\D{D}_*(\C{C}_\bullet),C_*)&\To  \operatorname{Det}_s(\C{C}_\bullet,\Gamma C_*)\\
\varphi&\;\mapsto\;(\Gamma\varphi)\circ\det
\end{align*}
is an isomorphism of groupoids. Here the source is a morphism groupoid in the $2$-category of stable quadratic modules.
\end{thm}

\begin{proof}
A strict determinant functor $\det\colon \C C_{\bullet}\r \Gamma C_*$ sends an object $X$ in $\C C_{1}$, a  weak equivalence $f\colon X\st{\sim}\r X'$ in $\C C_{1}$, and an object $\Delta$ in $\C C_{2}$
to elements 
\begin{align}
\nonumber\det (X)&\in C_0,\\
\label{lodeterminan}\det(f)=(\det(X'),\{f\})&\in C_0\ltimes C_1,\\
\nonumber\det(\Delta)=(\det(d_{1}\Delta),\{\Delta\})&\in C_0\ltimes C_1.
\end{align}
The first coordinate of $\det(f)$ is $\det(X')$ since it is the target. The source is
$\det(X')+\partial\{f\}=\det(X)$, i.e.
\begin{align*}
\partial\{f\}=-\det(X')+\det(X).
\end{align*}
For the same reasons, the first coordinate of $\det(\Delta)$ is $\det(d_{1}\Delta)$ and 
\begin{align*}
\partial\{\Delta\}=-\det(d_{1}\Delta)+\det(d_{0}\Delta)+\det(d_{2}\Delta).
\end{align*}
The functor $\det$ preserves identities, $\det(1_{X})=1_{\det(X)}=(\det(X),0)$, i.e.
\begin{align*}
\{1_X\}&=0.
\end{align*}
Moreover, it also preserves compositions. Since
\begin{align*}
\det(gf\colon X\st{f}\r Y\st{g}\r Z)&=(\det(Z),\{gf\}),\\
\det(g)\circ\det(f)&=(\det(Z),\{g\})\circ (\det(Y),\{f\})=(\det(Z),\{g\}+\{f\}),
\end{align*}
then
\begin{align*}
 \{gf\}&=\{g\}+\{f\}.
\end{align*}

Naturality of additivity data with respect to weak equivalences $\Phi\colon\Delta\st{\sim}\r\Delta'$ in $\C C_2$ says that the following diagram in $\Gamma C_*$ must commute,
$$\xymatrix@C=120pt@R=35pt{\det(d_0\Delta)+\det(d_2\Delta)\ar[r]^-{\det(\Delta)=(\det(d_1\Delta),\{\Delta\})}
\ar[d]^{
\begin{array}{l}
\scriptstyle\det(d_0\Phi)+\det(d_2\Phi)\\[-1pt]
\scriptstyle=(\det(d_0\Delta'),\{d_0\Phi\})+(\det(d_2\Delta'),\{d_2\Phi\})\\[-1pt]
\scriptstyle=(\det(d_0\Delta')+\det(d_2\Delta'),\{d_0\Phi\}^{\det(d_2\Delta')}+\{d_2\Phi\})
\end{array}
}
&\det(d_1\Delta)\ar[d]^{\det(d_1\Phi)=(\det(d_1\Delta'),\{d_1\Phi\})}\\
\det(d_0\Delta')+\det(d_2\Delta')\ar[r]_-{\det(\Delta')=(\det(d_1\Delta'),\{\Delta'\})}&\det(d_1\Delta')}$$
i.e.~
\begin{align*}
\{d_1\Phi\} +\{\Delta\}&=\{\Delta'\}+\{d_0\Phi\}^{\det(d_2\Delta')}+\{d_2\Phi\}\\
&=\{\Delta'\}+\{d_2\Phi\}+\{d_0\Phi\}^{\det(d_2\Delta')+\partial\{d_2\Phi\}}\\
&=\{\Delta'\}+\{d_2\Phi\} +\{d_0\Phi\}^{\det(d_2\Delta)}.
\end{align*}

The \Picardgroupoid\ $\Gamma  C_{*}$ is strict, in particular associativity constraints are identities. Hence the associativity axiom says that, for any object $\Theta$ in $\C C_{3}$, the following diagram commutes
$$\xymatrix@C=15pt@R=35pt{
\det(d_0d_1\Theta)+\det(d_1d_3\Theta)\ar[rr]^-{
\begin{array}{l}
\scriptstyle\det(d_1\Theta)\\[-3pt]
\scriptstyle=(\det(d_1d_2\Theta),\{d_1\Theta\})
\end{array}}&&\det(d_1d_2\Theta)\\
\det(d_0d_1\Theta)+\det(d_0d_3\Theta)+\det(d_2d_3\Theta)\ar[u]_{
\begin{array}{l}
\scriptstyle1_{\det(d_0d_1\Theta)}+\det(d_3\Theta)\\[-3pt]
\scriptstyle=(\det(d_0d_1\Theta),0)+(\det(d_1d_3\Theta),\{d_3\Theta\})
\\[-3pt]
\scriptstyle=(\det(d_0d_1\Theta)+\det(d_1d_3\Theta),\{d_3\Theta\})
\end{array}
}\ar[rr]_<(.38){
\begin{array}{l}\\[-7pt]
\scriptstyle\det(d_0\Theta)+1_{\det(d_2d_3\Theta)}\\[-3pt]
\scriptstyle=(\det(d_0d_2\Theta),\{d_0\Theta\})+(\det(d_2d_3\Theta),0)\\[-3pt]
\scriptstyle=(\det(d_0d_2\Theta)+\det(d_2d_3\Theta),\{d_0\Theta\}^{\det(d_2d_3\Theta)})
\end{array}
}&&\det(d_0d_2\Theta)+\det(d_2d_3\Theta)
\ar[u]_{
\begin{array}{l}
\scriptstyle
\det(d_2\Theta)\\[-3pt]
\scriptstyle=(\det(d_{1}d_2\Theta),\{d_2\Theta\})
\end{array}}}$$
i.e.
\begin{align*}
\{d_{1}\Theta\}+\{d_{3}\Theta\}&=\{d_{2}\Theta\}+\{d_{0}\Theta\}^{\det(d_{2}d_{3}\Theta)}.
\end{align*}

The commutativity axiom says that the following diagram commutes for any pair of objects $X$ and $Y$ in $\C C_{1}$,
$$\xymatrix@C=35pt{&\det( X\sqcup  Y)\ar@{<-}[rd]^{\qquad
\begin{array}{l}
\scriptstyle\det(s_0 X\sqcup s_1 Y)=\\[-3pt]
\scriptstyle(\det(X\sqcup Y),\{s_0 X\sqcup s_1 Y\})
\end{array}
}\ar@{<-}[ld]_-{\begin{array}{l}
\scriptstyle\det(s_1 X\sqcup s_0 Y)=\\[-3pt]
\scriptstyle(\det(X\sqcup Y),\{s_1 X\sqcup s_0 Y\})
\end{array}\qquad}&\\\det( Y)+\det( X)\ar[rr]^-{\com}_-{(\det( X)+\det( Y),\langle\det( X),\det( Y)\rangle)}&&\det( X)+\det( Y)}$$
i.e.
\begin{align*}
\{s_0 X\sqcup s_1 Y\}+\langle\det( X),\det( Y)\rangle&=\{s_1 X\sqcup s_0 Y\}.
\end{align*}

Being strict means that $\det(s_{0}(*))=I=0$ and $\det(s^{2}_{0}(*))=1_{I}=(0,0)$, i.e.
\begin{align*}
\det(s_{0}(*))&=0,&\{s^{2}_{0}(*)\}&=0.
\end{align*}

Conversely, any choice of elements
\begin{align*}
\det (X)&\in C_0,&
\{f\}&\in C_1,&
\{\Delta\}&\in C_1,
\end{align*}
satisfying the previous equations
yields a strict determinant functor $\det\colon \C C_{\bullet}\r \Gamma C_*$ defined by \eqref{lodeterminan}.

This, combined with the presentation of $\D{D}_*(\C{C}_\bullet)$ in Definition \ref{univtarget} and the alternative set of relations in Remark \ref{lessrel},  has two important consequences: the formulas in the statement define a strict determinant functor $\det\colon \C{C}_\bullet\r \Gamma\D{D}_*(\C{C}_\bullet)$, and any strict determinant functor $\det'\colon  \C{C}_{\bullet}\r \Gamma C_{*}$ factors uniquely as
$$\det'\colon  \C{C}_{\bullet}\st{\det}\To  \Gamma\D{D}_*(\C{C}_\bullet)\st{\Gamma \varphi}\To \Gamma C_{*},$$
where $\varphi\colon \D{D}_*(\C{C}_\bullet)\r C_{*}$ is a morphism of stable quadratic modules. Hence, it is only left to check that $\det\colon \C{C}_\bullet\r \Gamma\D{D}_*(\C{C}_\bullet)$ is universal. 

Let $\C P$ be any \Picardgroupoid. By Corollary \ref{pg0sq}, there exists a $0$-free stable quadratic module $C_{*}$ and an equivalence $f\colon \Gamma C_{*}\st{\sim}\r\C P$ in the $2$-category of \Picardgroupoids. Consider the following commutative diagram
$$\xymatrix@C=30pt{\hom(\D{D}_*(\C{C}_\bullet), C_*)\ar[r]^-{\cong}_-{(\Gamma-)\circ\det}
\ar[d]^{\sim}_{
\text{Corollary \ref{pg0sq}}
}& \operatorname{Det}_s(\C{C}_\bullet,\Gamma C_*)\ar[dd]_{\sim}^{\text{Lemma \ref{strictdets}}}\\
\hom_{c}^\otimes(\Gamma\D{D}_*(\C{C}_\bullet),\Gamma C_*)\ar[d]^{\sim}_{f\circ-}&\\
\hom_{c}^\otimes(\Gamma\D{D}_*(\C{C}_\bullet),\C P)\ar[r]^-{-\circ\det}& \operatorname{Det}(\C{C}_\bullet,\Gamma C_*)}$$
The upper horizontal arrow is an isomorphism. This is the last part of the statement, that we have just checked. Two vertical arrows are equivalences by the results indicated in the labels. Moreover, $f\circ-$ is an equivalence of categories since $f$ is an equivalence of \Picardgroupoids. Thus the lower horizontal arrow is also an equivalence.

\end{proof}

\subsection{Non-commutative determinant functors}\label{ncdf}

In \cite{dc}, Deligne also considers determinant functors into categorical groups that are not necessarily symmetric. Of course, one has to omit the commutativity axiom in Definition~\ref{detsimpcat} if one chooses to work in this context. We will call those determinant functors \emph{non-commutative determinant functors}. However, as Deligne already noticed, it turns out that this notion is not essentially more general that the theory of commutative determinant functors considered above.



We consider the left adjoint of the functor sending a crossed module $C_{*}$ to the pair of sets $(C_{0},C_{1})$. Objects in the image of this left adjoint are said to be \emph{free}.

Let $\grupo{E}$ denote the free group on a set $E$. The free crossed module $F_{*}^{c}(E_{0},E_{1})$ on a pair of sets $(E_{0},E_{1})$ is defined as  follows: $F_{0}^{c}(E_{0},E_{1})=\grupo{E_{0}\sqcup E_{1}}$ is a free group, $F_{1}^{c}(E_{0},E_{1})=\ker p$ is the kernel of the homomorphism,
\begin{align*}
\grupo{E_{0}\sqcup E_{1}}&\st{p}\onto \grupo{E_{0}},&
E_{0}\ni e_{0}&\mapsto e_{0},&
E_{1}\ni e_{1}&\mapsto 0,
\end{align*}
the homomorphism $\partial \colon F_{1}^{c}(E_{0},E_{1})\hookrightarrow F_{0}^{c}(E_{0},E_{1})$ is the inclusion, and $F_{0}^{c}(E_{0},E_{1})$ acts on $F_{1}^{c}(E_{0},E_{1})$ by conjugation. The universal property of a free crossed module holds since $F_{1}^{c}(E_{0},E_{1})$ is freely generated as a group by the conjugates,
$$e_{1}^{c_{0}}=-c_{0}+e_{1}+c_{0},\qquad e_{1}\in E_{1},c_{0}\in\grupo{E_{0}}.$$
 
 
Given two sets of relations $R_{i}\subset F_{i}^{c}(E_{0},E_{1})$, $i=0,1$, the crossed module $C_{*}$ with generators $(E_{0},E_{1})$ and relations $(R_{0},R_{1})$ is defined as follows: $C_{0}$ is the quotient of $F_{0}^{c}(E_{0},E_{1})$ by the normal subgroup $N_{0}$ generated by $R_{0}\cup \partial R_{1}$, and $C_{1}$ is the quotient of $F_{1}^{c}(E_{0},E_{1})$ by the normal subgroup generated by,
\begin{align*}
r_{1}^{c_{0}},&\quad r_{1}\in R_{1}, c_{0}\in C_{0};&
-c_{1}+c_{1}^{n_{0}},&\quad c_{1}\in C_{1}, n_{0}\in N_{0}.
\end{align*}
The action of $C_{0}$ on $C_{1}$ and the homomorphism $\partial\colon C_{1}\r C_{0}$ are defined so that the natural projection $F_{*}^{c}(E_{0},E_{1})\onto C_{*}$ is a morphism of crossed modules.

Crossed modules defined by a presentation satisfy the obvious universal property.

\begin{defn}
Given an $S_{\bullet}$-category $\C{C}_\bullet$  we define $\mathcal{D}'_*(\C{C}_\bullet)$ as  the crossed module presented by generators (G1--3) and  relations (R1--8) as in Definition~\ref{univtarget}.  
\end{defn}

This crossed module is $0$-free. Indeed, $\mathcal{D}'_0(\C{C}_\bullet)$ is the free group generated by the objects of $\C C_{1}$ different from $s_{0}(*)$.

Universal non-commutative determinant functors are defined as in Definition \ref{unidoni},  taking values in categorical groups instead.

\begin{thm}\label{main1prima}
There exists a universal non-commutative determinant functor $\det\colon \C{C}_\bullet\r \Gamma\D{D}'_*(\C{C}_\bullet)$ defined by 
\begin{itemize}
 \item $\det(\X)=[\X]$ for any object in $\C C_1$,
\item $\det(\X\st{\sim}\r \X')=([\X'],[\X\st{\sim}\r\X'])$ for any weak equivalence in $\C C_1$,
\item $\det(\Delta)=([d_1\Delta],[\Delta])$ for any object in $\C C_2$.
\end{itemize}
This non-commutative determinant functor is strict. 
Moreover, for any crossed module $C_{*}$, the functor
\begin{align*}
\hom(\D{D}'_*(\C{C}_\bullet),C_*)&\To  \operatorname{Det}_s(\C{C}_\bullet,\Gamma C_*)\\
\varphi&\;\mapsto\;(\Gamma\varphi)\circ\det
\end{align*}
is an isomorphism of groupoids. Here the source is a morphism groupoid in the $2$-category of crossed modules.
\end{thm}

This theorem can be proved as Theorem \ref{main1}, mutatis mutandis. 

\begin{prop}\label{malteprop}
There exists a unique map
$$
\langle\cdot,\cdot\rangle \colon\mathcal{D}'_*(\C{C}_\bullet)\times\mathcal{D}'_*(\C{C}_\bullet)\r \mathcal{D}'_*(\C{C}_\bullet)
$$
such that
\begin{enumerate}
 \item $\langle [\X],[\Y]\rangle =-[s_0 \X \sqcup s_1 \Y ]+[s_1 \X \sqcup s_0 \Y ]$ for any two objects $\X, \Y$ in $\C{C}_1$.
 \item $(\mathcal{D}'_*(\C{C}_\bullet),\langle \cdot,\cdot\rangle )$ is a reduced $2$-module.
\end{enumerate}
Moreover, this map satisfies
$$
\langle a,b\rangle +\langle b,a\rangle =0
$$
for any $a,b\in\mathcal{D}'_0(\C{C}_\bullet)$, i.\,e.\, $(\mathcal{D}'_*(\C{C}_{\bullet}),\langle \cdot,\cdot\rangle )$ is a stable $2$-module.
\end{prop}

\begin{proof} 
We use the same argument as in \cite[Lemma~2.2.3]{malte}. 
The relations for the objects $s_i s_j(\X)$ in $\C{C}_3$ imply $\langle [\X],[s_0(*)]\rangle =\langle [s_0(*)],[\X]\rangle =0$ for any object in $\C{C}_1$. Recall that $[s_0(*)]=0$. 
Since the group $\mathcal{D}'_0(\C{C}_{\bullet})$ is the free group over the set $E$ of objects of $\C{C}_1$ minus the degenerate object $s_0(*)$, an induction over the reduced word length of the two arguments shows that the map
$$
E\times E\r \mathcal{D}'_1(\C{C}_\bullet),\qquad (\X,\Y)\mapsto -[s_0 \X \sqcup s_1 \Y ]+[s_1 \X \sqcup s_0 \Y ]
$$
extends in a unique way to a map
$$
\langle \cdot,\cdot\rangle \colon\mathcal{D}'_*(\C{C}_\bullet)\times\mathcal{D}'_*(\C{C}_\bullet)\r \mathcal{D}'_*(\C{C}_\bullet)
$$
satisfying
\begin{enumerate}
 \item $\langle c,c'+c''\rangle =\langle c,c'\rangle ^{c''}+\langle c,c''\rangle $,
 \item $\langle c+c',c''\rangle =\langle c',c''\rangle +\langle c,c''\rangle ^{c'}$.
\end{enumerate}

It remains to show that $(\mathcal{D}'_*(\C{C}_{\bullet}),\langle \cdot,\cdot\rangle )$ is a stable $2$-module. For this, it suffices to check the axioms $(3)$, $(4)$, $(6)$, and $(8)$ in Section~\ref{crossedmodules}. Axioms $(3)$ and $(6)$ are immediate from the definition of $\langle \cdot,\cdot\rangle $.

We verify axiom $(8)$. Let $\X$ and $\Y$ be objects of $\C{C}_1$. Given two coproducts $s_1 \X \sqcup s_0 \Y$ and $ s_{0}\Y\sqcup s_{1}\X$ their universal property yields a unique isomorphism fitting into the following commutative diagram,
$$\xymatrix{
s_{1}\X\ar[r]\ar@{=}[d]&s_1 \X \sqcup s_0 \Y\ar[d]^{\sim}&s_0 \Y\ar[l]\ar@{=}[d]\\
s_{1}\X\ar[r]&s_{0}\Y\sqcup s_{1}\X&s_0 \Y\ar[l]}$$
where the horizontal arrows are the inclusions of the factors.
This isomorphism and the corresponding one after exchanging $\X$ and $\Y$ yield the following relations,
\begin{align*}
&\hspace{-13pt} [d_2(s_1 \X \sqcup s_0 \Y \st{\sim}\to s_{0}\Y\sqcup s_{1}\X)]+ [d_0(s_1 \X \sqcup s_0 \Y \st{\sim}\to s_{0}\Y\sqcup s_{1}\X)]^{[\X]}\\
={}&-[s_0 \Y \sqcup s_1 \X ]+[d_1 (s_1 \X \sqcup s_0 \Y \st{\sim}\to s_{0}\Y\sqcup s_{1}\X)]+ [s_1 \X \sqcup s_0 \Y],\\
&\hspace{-13pt} [d_2(s_1 \Y \sqcup s_0 \X \st{\sim}\to s_{0}\X\sqcup s_{1}\Y)]+ [d_0(s_1 \Y \sqcup s_0 \X \st{\sim}\to s_{0}\X\sqcup s_{1}\Y)]^{[\Y]}\\
={}&-[s_0 \X \sqcup s_1 \Y ]+[d_1 (s_1 \Y \sqcup s_0 \X \st{\sim}\to s_{0}\X\sqcup s_{1}\Y)]+ [s_1 \Y \sqcup s_0 \X].
\end{align*}
Moreover, we have
$$
d_i (s_1 \X \sqcup s_0 \Y \st{\sim}\To  s_{0}\Y\sqcup s_{1}\X) =
\left\{
\begin{array}{ll}
1_{Y},&i=0;\\
\X\sqcup \Y\st{\sim}\To  \Y\sqcup \X,&i=1;\\
1_{X},&i=2;\\
\end{array}\right.
$$
$$
d_i (s_1 \Y \sqcup s_0 \X \st{\sim}\To  s_{0}\X\sqcup s_{1}\Y) =
\left\{
\begin{array}{ll}
1_{X},&i=0;\\
\Y\sqcup \X\st{\sim}\To  \X\sqcup \Y,&i=1;\\
1_{Y},&i=2;\\
\end{array}\right.
$$
and hence,
\begin{align*}
\langle [\X],[\Y]\rangle +\langle [\Y],[\X]\rangle &= -([\X\sqcup \Y\st{\sim}\To  \Y\sqcup \X]+[\Y\sqcup \X\st{\sim}\To  \X\sqcup \Y])^{\partial [s_1Y\sqcup s_0X]}\\
&=-[1_{\Y\sqcup \X}]^{\partial [s_1Y\sqcup s_0X]}=0.
\end{align*}
By induction it follows that $\langle c,c'\rangle +\langle c',c\rangle =0$ for any pair of elements $c$, $c'$ in $\mathcal{D}'_0(\C{C}_{\bullet})$.

Finally, we verify axiom $(4)$. Since both sides of the axiom define operations of $\mathcal{D}'_0(\C{C})$ on $\mathcal{D}'_1(\C{C})$, it suffices to check the relation for the action of an object $U$ of $\C{C}_1$ on a weak equivalence $\alpha\colon \X\r \X'$ in $\C{C}_1$ and on an object $\Delta$ in $\C{C}_2$, respectively. The weak equivalences $s_1 \alpha \sqcup s_0 1_U $ and $s_0 \alpha \sqcup s_1 1_U $ in $\C{C}_2$ imply
\begin{align*}
[s_1 \X' \sqcup s_0 U ]+[\alpha]&= [\alpha\sqcup 1_U]+[s_1 \X \sqcup s_0 U ],\\
[s_0\X'\sqcup s_1U]+[\alpha]^{[U]}&=[\alpha\sqcup 1_U]+[s_0 \X \sqcup s_1 U ],
\end{align*}
and hence,
\begin{align*}
[\alpha]^{[U]}&=\langle [\X'],[U]\rangle +[\alpha]+\langle [U],[\X]\rangle\\& =[\alpha]+\langle [U],-[\X']\rangle ^{[\X]}+\langle [U],[\X]\rangle =[\alpha]+\langle [U],\partial[\alpha]\rangle .
\end{align*}
The objects $s_0(\Delta)\sqcup s_1 s_1(U)$, $s_1(\Delta)\sqcup s_0 s_1(U)$, 
and $s_2(\Delta)\sqcup s_1 s_0(U)$ in $\C{C}_3$ imply the relations
\begin{align*}
[\Delta\sqcup s_1 U]+ [s_0 d_2\Delta \sqcup s_1 U ]&=[s_0 d_1\Delta \sqcup s_1 U ]+[\Delta]^{[U]},\\
[\Delta\sqcup s_1 U ]+ [s_1 d_2\Delta\sqcup s_0 U ]&=[\Delta\sqcup s_0 U ]+[s_0 d_0\Delta \sqcup s_1 U ]^{[d_2\Delta]},\\
[s_1 d_1\Delta \sqcup s_0 U ]+ [\Delta]&=[\Delta\sqcup s_0 U ]+[s_1 d_0\Delta \sqcup s_0 U ]^{[d_2\Delta]}.
\end{align*}
From these, one deduces easily the relation
$$
[\Delta]^{[U]}=[\Delta]+\langle [U],\partial [\Delta]\rangle .
$$
\end{proof}

\begin{cor}
The morphism of stable $2$-modules
\begin{align*}
\D{D}'_{*}(\C{C}_{\bullet})&\To  \D{D}_{*}(\C{C}_{\bullet}),\\
[X]&\;\mapsto\;[X],\\
[X\st{\sim}\r X']&\;\mapsto\;[X\st{\sim}\r X'],\\
[\Delta]&\;\mapsto\;[\Delta],
\end{align*}
is a quasi-isomorphism.
\end{cor}

\begin{proof}
Let $C_{*}$ be a stable quadratic module. 
A morphism of stable $2$-modules $\varphi\colon \D{D}'_{*}(\C{C}_{\bullet})\r C_{*}$ is the same as a morphism between the underlying crossed modules such that $\grupo{\varphi[X],\varphi[Y]}=-\varphi[s_{0}X\sqcup s_{1}Y]+\varphi[s_{1}X\sqcup s_{0}Y]$ for any two objects $X$ and $Y$ in $\C C_{1}$. This, together with the crossed module presentation of $\D{D}'_{*}(\C{C}_{\bullet})$, shows that the morphism in the statement is well defined, and moreover, any  morphism of stable $2$-modules $\varphi\colon \D{D}'_{*}(\C{C}_{\bullet})\r C_{*}$ factors uniquely through the morphism in the stament. Therefore, 
 the stable quadratic module obtained as the reflection of the stable $2$-module $\D{D}'_{*}(\C{C}_{\bullet})$ in the sense of \cite[Lemma 4.18]{1tk}, is $\D{D}_{*}(\C{C}_{\bullet})$, and the unit of the reflection is the morphism in the statement. Hence this morphism is a quasi-isomorphism by \cite[Remark 4.21]{1tk}.
\end{proof}

\begin{cor}\label{tambien}
The determinant functor $\det\colon \C{C}_\bullet\r \Gamma\D{D}_*(\C{C}_\bullet)$ in Theorem \ref{main1} is also universal among non-commutative determinant functors.
\end{cor}

\subsection{The connection with homotopy theory}\label{torremolinos}

In this section we consider the homotopy type of $|\we\, \C C_\bullet|$, the \emph{geometric realization} of the simplicial subcategory of weak equivalences in an $S_\bullet$-category $\C C_\bullet$. This space is the geometric realization of a simplicial set: the diagonal of the bisimplicial nerve of $\we\, \C C_\bullet$. The bisimplicial nerve of a simplicial category is obtained by applying degreewise the nerve functor from categories to simplicial sets.

Notice that the space $|\we\, \C C_\bullet|$ is connected. Moreover, it is reduced as a $CW$-complex, i.e.~it has only one vertex, since $\we\,\C C_0=\C C_0=*$ is the terminal category.

The categories $\C C_n$ have finite coproducts, which are preserved by face and degeneracy operators, and finite coproducts of weak equivalences are weak equivalences. Hence, we can enhance $|\we\, \C C_\bullet|$ to a $\Gamma$-space $A$ with $A(\mathbf{1})=|\we\, \C C_\bullet|$, compare \cite[\S2]{cct}. This $\Gamma$-space yields an $\Omega$-spectrum $K(\C C_{\bullet})$, with underlying sequence of spaces 
$$\Omega A(\mathbf{1}),A(\mathbf{1}),BA(\mathbf{1}),B^2A(\mathbf{1}),\dots$$
Indeed, it is enough to observe that $A(\mathbf{1})$ is the loop space of $BA(\mathbf{1})$, since $A(\mathbf{1})=|\we\, \C C_\bullet|$ is connected, see \cite[Proposition 1.4 and the following note]{cct}.

\begin{exm}\label{mainexmk}
The spectra $K(\C C_{\bullet})$ of the $S_{\bullet}$-categories in Example \ref{mainexm} are:

\begin{enumerate}
\item The $K$-theory spectrum $K(\C W)$ \cite{akttsI} of a Waldhausen category $\C W$.

\item The derived $K$-theory spectrum of $\C W$, $DK(\C W)$ \cite{sdckt2,malt}.

\addtocounter{enumi}{1}



\item Neeman's special $K$-theory spectrum $K(^{d}\C T)$ of a triangulated category $\C T$ \cite{kttcneeman}. This follows from the fact  the inclusion of objects ${S}_\bullet(^{d}\!\C{T})\subset\iso{\bar{S}_\bullet(^{d}\!\C{T})}$ induces a homotopy equivalence on geometric realizations, compare \cite[corollary to Lemma 1.4.1]{akts}.

\item Neeman's virtual $K$-theory spectrum $K(^{v}\C T)$ \cite{kttcneeman}, by the same reason as above.

\item  Maltsiniotis's $K$-theory spectrum $K(^{s}\C T_{\infty})$ of a strongly triangulated category  $\C T_{\infty}$ \cite{cts}, again by the argument of the two previous  cases.

\item Neeman's graded $K$-theory spectrum $K(\operatorname{Gr}^b\!\!\C{A})$ of an abelian category $\C A$ \cite{kttcneeman}.

\item Garkusha's $K$-theory spectrum $K(\mathbb{D})$ of a right pointed derivator~$\mathbb{D}$ \cite[\S5]{sdckt1}, see also \cite{ktdt}.
\end{enumerate}

In some of the previous references, the authors only care about the $K$-theory space of the corresponding $S_{\bullet}$-category $\C C_{\bullet}$, i.e.~$\Omega|\we\,\C C_{\bullet}|$. We can always enhance them to spectra as indicated above.


\end{exm}

Consider the functor $\lambda_{0}$ in Lemma \ref{equi}. 

\begin{thm}\label{cosa}
For any $S_{\bullet}$-category $\C C_{\bullet}$, there is a natural isomorphism $\D{D}_*(\C{C}_\bullet)\cong \lambda_0K(\C C_{\bullet})$ in $\ho\mathbf{squad}$.
\end{thm}

This theorem is a straightforward generalization of \cite[Theorem 1.7]{1tk}. Exactly the same proof works with the appropriate changes in notation.

\begin{exm}\label{lasrelaciones2}
The $S_{\bullet}$-functors in Example \ref{lasrelaciones} induce comparison maps of $K$-theory spectra: 
\begin{enumerate}
\item $K(\C W)\r DK(\C W)$ for any Waldhausen category $\C W$ \cite{sdckt2,malt}. 
\item $K({}^{d}\C T)\r K({}^{v}\C T)$ for any triangulated category $\C T$ \cite{kttcneeman}. 
\item $K(\C A)\r K({}^{d}\C T)$ for any triangulated category $\C T$ with a $t$-structure with heart $\C A$ \cite{kttcneeman}.

\addtocounter{enumi}{1}

\item An equivalence $DK(\C W)\st{\sim}\r K(\mathbb{D}_{\C W})$ for any Waldhausen category $\C W$ with cylinders satisfying the two-out-of-three axiom \cite{sdckt2,malt}.

\item $K(\mathbb{D})\r K({}^{s}\mathbb{D}(*))$ for any triangulated derivator $\mathbb{D}$ \cite{ktdt}.

\item An equivalence $K(\C E)\st{\sim}\r K(C^{b}(\C E))$ for any exact category $\C E$ \cite{tckt}. 

\item $K(C^{b}(\C E))\r K({}^{s}D^{b}(\C E))$ for any exact category $\C E$, compare \cite{cts}.

\item An equivalence $K(\C A)\st{\sim}\r K(\operatorname{Gr}^b\!\!\C{A})$ for any abelian category $\C A$ \cite{kttcneeman}.
\end{enumerate}
Applying $\lambda_0$ to these maps we obtain some stable quadratic module morphisms described in Section \ref{comparase}. The equivalence (9) together with Theorems \ref{main1} and \ref{cosa} shows that determinant functors on an abelian category $\C A$ concide essentially with graded determinant functors. 
\end{exm}

\subsection{Generators and (some) relations for $\pi_{1}$}\label{genrel2}

In this section we extend the results in \cite{k1wc} to the unified context introduced in this paper. We fix an $S_\bullet$-category $\C{C}_{\bullet}$ satisfying the following additional property: 
\begin{itemize}
\item The functor sending an $(n+1)$-simplex to its ${n+2}$ faces,
$$\phi\colon\C{C}_{n+1}\To  \C{C}_{n}\times_{\C{C}_{n-1}} \st{n+2}{\cdots\cdots}\times_{\C{C}_{n-1}} \C{C}_{n},\quad n\geq 0,$$
 is a fibration of categories, i.e.~it satisfies the isomorphism lifting property: any isomorphism $\phi(x)\r y$ in the target is the image by $\phi$ of an isomorphism $x\r x'$ in the source, in particular $y=\phi(x')$ is in the image of $\phi$. 
\end{itemize}
This property is satisfied by Example \ref{mainexm} (1,3--7), but it need not be satisfied by Example \ref{mainexm} (8) on derivators.

\begin{defn}
A \emph{triangle} $\Delta$ in $\C{C}_{\bullet}$ is just an object of $\C{C}_{2}$. A \emph{weak triangle} $(\Delta,f)$ in $\C{C}_{\bullet}$ consists of a triangle $\Delta$  and a morphism $f\colon C\st{\sim}\r d_{0}\Delta$ in $\we(\C{C}_{1})$. We denote,
$$[\Delta,f]=[\Delta]+[f]^{[d_{2}\Delta]}\in\D{D}_{1}(\C{C}_{\bullet}).$$

A \emph{pair of triangles} $(\Delta_{1}; \Delta_{2})$ consists of two  triangles, $\Delta_{1}$ and  $\Delta_{2}$, with the same faces $d_i\Delta_1=d_i\Delta_2$, $i=0,1,2$. 
A pair of triangles yields an element,
$$[\Delta_{1}; \Delta_{2}]=-[\Delta_{1}]+[\Delta_{2}]\in\pi_{1}\D{D}_{*}(\C{C}_{\bullet}).$$
A \emph{pair of weak triangles} $(\Delta_{1},f_{1}; \Delta_{2},f_{2})$ consists of two weak triangles, $(\Delta_{1},f_{1})$ and  $(\Delta_{2},f_{2})$, such that $\Delta_{1}$ and $\Delta_{2}$ have the same two last edges,
$d_{1}\Delta_{1}=d_{1}\Delta_{2}$, $d_{2}\Delta_{1}=d_{2}\Delta_{2}$,
and  $f_{1}$ and $f_{2}$ have the same source,
$$d_{0}\Delta_{1}  \st{f_{1}}\longleftarrow C\st{f_{2}}\To  d_{0}\Delta_{2}.$$
Any pair of weak triangles yields an element,
$$[\Delta_{1},f_{1}; \Delta_{2},f_{2}]=-[\Delta_{1},f_{1}]+[\Delta_{2},f_{2}]\in\pi_{1}\D{D}_{*}(\C{C}_{\bullet}).$$

(Pairs of) triangles are regarded as (pairs of) weak triangles, $\Delta=(\Delta,1_{d_0\Delta})$ and  $(\Delta_{1}; \Delta_{2})=(\Delta_{1},1_{d_0\Delta_1}; \Delta_{2},1_{d_0\Delta_2})$.

Notice that a trivial pair of weak triangles is trivial, i.e.
\begin{equation*}\tag{S2}
[\Delta,f; \Delta,f]=0\in\pi_{1}\D{D}_{*}(\C{C}_{\bullet}).
\end{equation*}
\end{defn}

\begin{thm}\label{otromain}
Any element in $\pi_{1}\D{D}_{*}(\C{C}_{\bullet})$ is a pair of weak triangles.
\end{thm}

Now we extend to our unified framework all results in \cite{k1wc} needed so that the proof of \cite[Theorem 2.1]{k1wc} works for Theorem \ref{otromain}. 


\begin{defn}
A \emph{$3\times 3$ diagram} in $\C{C}_{\bullet}$ consists of four objects $\Theta_{1}$, $\Theta_{2}$, $\Theta_{3}$, $\Theta_{4}$ in $\C{C}_{3}$ such that,
\begin{align*}
d_{2}\Theta_{1}&=d_{2}\Theta_{2},&
d_{1}\Theta_{3}&=d_{1}\Theta_{4},\\
d_{1}\Theta_{1}&=d_{3}\Theta_{3},&
d_{1}\Theta_{2}&=d_{3}\Theta_{4},\\
d_{0}d_{3}\Theta_{1}&=d_{0}d_{1}\Theta_{2},&
d_{0}d_{1}\Theta_{1}&=d_{0}d_{3}\Theta_{2},\\
d_{0}\Theta_{1}&=s_{1}d_{0}d_{3}\Theta_{1}\sqcup s_{0}d_{0}d_{1}\Theta_{1},&
d_{0}\Theta_{2}&=s_{0}d_{0}d_{1}\Theta_{2}\sqcup s_{1}d_{0}d_{3}\Theta_{2}.
\end{align*}
\end{defn}

\begin{prop}\label{tresportres}
Given a $3\times 3$ diagram in $\C{C}_{\bullet}$ the following equation holds in~$\D{D}_{1}(\C{C}_{\bullet})$,
\begin{align*}
\grupo{[d_{0}d_{1}\Theta_{1}],[d_{0}d_{3}\Theta_{1}]}={}&
-[d_{3}\Theta_{1}]-[d_{0}\Theta_{3}]^{[d_{2}d_{3}\Theta_{3}]}
-[d_{2}\Theta_{3}]\\
&+[d_{2}\Theta_{4}]+[d_{0}\Theta_{4}]^{[d_{2}d_{3}\Theta_{4}]}
+[d_{3}\Theta_{2}].
\end{align*}
\end{prop}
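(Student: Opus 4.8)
The plan is to prove the identity by a direct computation inside the stable quadratic module $\D{D}_*(\C{C}_\bullet)$, using only the presentation of Definition~\ref{univtarget} and the structural laws of stable quadratic modules recalled in Section~\ref{crossedmodules}: the action formula $c_1^{c_0}=c_1+\grupo{c_0,\partial(c_1)}$, bilinearity and centrality of the bracket, the fact that $\grupo{\cdot,\cdot}$ factors through $\D{D}_0(\C{C}_\bullet)^{\abb}\otimes\D{D}_0(\C{C}_\bullet)^{\abb}$, and stability $\grupo{a,b}=-\grupo{b,a}$. Throughout, write $A=d_0d_1\Theta_1=d_0d_3\Theta_2$ and $B=d_0d_3\Theta_1=d_0d_1\Theta_2$, so that the left-hand side of the claim is $\grupo{[A],[B]}$.

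First I would apply relation (R8) to $\Theta_3$ and to $\Theta_4$. On $\Theta_3$ it gives $[d_2\Theta_3]+[d_0\Theta_3]^{[d_2d_3\Theta_3]}=[d_1\Theta_3]+[d_3\Theta_3]$, so the portion $-[d_0\Theta_3]^{[d_2d_3\Theta_3]}-[d_2\Theta_3]$ of the right-hand side equals $-[d_3\Theta_3]-[d_1\Theta_3]$; on $\Theta_4$ it gives $[d_2\Theta_4]+[d_0\Theta_4]^{[d_2d_3\Theta_4]}=[d_1\Theta_4]+[d_3\Theta_4]$. Substituting these, cancelling the adjacent pair $-[d_1\Theta_3]+[d_1\Theta_4]$ via $d_1\Theta_3=d_1\Theta_4$, and replacing $[d_3\Theta_3]$ by $[d_1\Theta_1]$ and $[d_3\Theta_4]$ by $[d_1\Theta_2]$ via $d_1\Theta_1=d_3\Theta_3$ and $d_1\Theta_2=d_3\Theta_4$, the right-hand side collapses to $-[d_3\Theta_1]-[d_1\Theta_1]+[d_1\Theta_2]+[d_3\Theta_2]$. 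Applying (R8) now to $\Theta_1$ and to $\Theta_2$ turns this into $-[d_0\Theta_1]^{[d_2d_3\Theta_1]}-[d_2\Theta_1]+[d_2\Theta_2]+[d_0\Theta_2]^{[d_2d_3\Theta_2]}$, in which $-[d_2\Theta_1]+[d_2\Theta_2]$ cancels because $d_2\Theta_1=d_2\Theta_2$.

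The heart of the argument is dealing with the two exponents $[d_2d_3\Theta_1]$ and $[d_2d_3\Theta_2]$. The simplicial identity $d_2d_3=d_2d_2$ on a $3$-simplex, combined with $d_2\Theta_1=d_2\Theta_2$, gives $d_2d_3\Theta_1=d_2d_3\Theta_2=:P$. Expanding the action by $c_1^{[P]}=c_1+\grupo{[P],\partial c_1}$ and using centrality, the expression becomes $-[d_0\Theta_1]+[d_0\Theta_2]+\grupo{[P],\partial[d_0\Theta_2]}-\grupo{[P],\partial[d_0\Theta_1]}$. The last two defining conditions of a $3\times 3$ diagram read $d_0\Theta_1=s_1(B)\sqcup s_0(A)$ and $d_0\Theta_2=s_0(B)\sqcup s_1(A)$; since faces commute with coproducts and the simplicial identities compute the faces of these $2$-simplices, relation (R2) yields $\partial[d_0\Theta_1]=-[B\sqcup A]+[A]+[B]$ and $\partial[d_0\Theta_2]=-[B\sqcup A]+[B]+[A]$, which have the same image in $\D{D}_0(\C{C}_\bullet)^{\abb}$. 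As $\grupo{\cdot,\cdot}$ depends only on that image in each argument, $\grupo{[P],\partial[d_0\Theta_1]}=\grupo{[P],\partial[d_0\Theta_2]}$ and the two correction terms cancel, leaving $-[d_0\Theta_1]+[d_0\Theta_2]$.

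Finally, relation (R9) with arguments $B$ and $A$ gives $\grupo{[B],[A]}=-[s_0(B)\sqcup s_1(A)]+[s_1(B)\sqcup s_0(A)]$, hence $-[d_0\Theta_1]+[d_0\Theta_2]=-[s_1(B)\sqcup s_0(A)]+[s_0(B)\sqcup s_1(A)]=-\grupo{[B],[A]}$ by centrality, and stability turns this into $\grupo{[A],[B]}$, the left-hand side. I expect the only genuine obstacle to be the bookkeeping around the exponent terms $[d_0\Theta_i]^{[d_2d_3\Theta_i]}$: one must track the order of summands in the non-abelian group $\D{D}_1(\C{C}_\bullet)$ until centrality of the bracket can be invoked, and must check carefully — via the simplicial identities — both that $d_2d_3\Theta_1=d_2d_3\Theta_2$ and that $\partial[d_0\Theta_1]$ and $\partial[d_0\Theta_2]$ agree after abelianization. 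Everything else is a mechanical rewrite using (R2), (R8), (R9) and the eight face identities of the $3\times 3$ diagram.
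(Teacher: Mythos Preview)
Your proof is correct and is exactly the computation the paper has in mind when it says the result ``follows straightforwardly from (R8) and (R9)''; you have simply written out the details. One cosmetic point: the faces of $d_0\Theta_i$ are literally $s_0(*)\sqcup A$ and $B\sqcup s_0(*)$ rather than $A$ and $B$, but this is harmless since you only use that $\partial[d_0\Theta_1]$ and $\partial[d_0\Theta_2]$ agree in the abelianization, which they visibly do either way.
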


This result follows straightforwardly from (R8) and (R9). 

\begin{defn}
A \emph{pair of $3\times 3$ diagrams} consists of two $3\times 3$ diagrams, 
$\Theta_{1}$, $\Theta_{2}$, $\Theta_{3}$, $\Theta_{4}$ and $\Theta_{1}'$, $\Theta_{2}'$, $\Theta_{3}'$, $\Theta_{4}'$, such that for $i=0,1,2$,
\begin{align*}
d_{i}d_{3}\Theta_{1}&=d_{i}d_{3}\Theta_{1}',&
d_{i}d_{3}\Theta_{2}&=d_{i}d_{3}\Theta_{2}',&
d_{i}d_{0}\Theta_{3}&=d_{i}d_{0}\Theta_{3}',\\
d_{i}d_{2}\Theta_{3}&=d_{i}d_{2}\Theta_{3}',&
d_{i}d_{0}\Theta_{4}&=d_{i}d_{0}\Theta_{4}',&
d_{i}d_{2}\Theta_{4}&=d_{i}d_{2}\Theta_{4}'.
\end{align*}
\end{defn}

\begin{cor}\label{hartura}
For any pair of 
$3\times 3$ diagrams in $\C{C}_{\bullet}$, 
$\Theta_{1}$, $\Theta_{2}$, $\Theta_{3}$, $\Theta_{4}$ and $\Theta_{1}'$, $\Theta_{2}'$, $\Theta_{3}'$, $\Theta_{4}'$, the following relation between pairs of triangles in $\pi_{1}\D{D}_{*}(\C{C}_{\bullet})$ holds,
\begin{align*}
&\hspace{-20pt}[d_{3}\Theta_{1};d_{3}\Theta_{1}']-[d_{2}\Theta_{4};d_{2}\Theta_{4}']+[d_{0}\Theta_{3};d_{0}\Theta_{3}']\\
={}&[d_{3}\Theta_{2};d_{3}\Theta_{2}']-[d_{2}\Theta_{3};d_{2}\Theta_{3}']+[d_{0}\Theta_{4};d_{0}\Theta_{4}'].
\end{align*}
\end{cor}

This is an easy consequence of Proposition \ref{tresportres}, compare \cite[Theorem 3.1]{k1wc}.

\begin{defn}
A \emph{weak $3\times 3$ diagram} in $\C{C}_{\bullet}$ consists of a $3\times 3$ diagram  $\Theta_{1}$, $\Theta_{2}$, $\Theta_{3}$, $\Theta_{4}$; two objects $\Delta_{1}$, $\Delta_{2}$ in $\C{C}_{2}$ together with morphisms,
\begin{align*}
w_{1}&\colon\Delta_{1}\st{\sim}\To  d_{0}\Theta_{3},&
w_{2}&\colon\Delta_{2}\st{\sim}\To  d_{0}\Theta_{4};
\end{align*}
and a commutative diagram in $\we(\C{C}_{1})$, 
$$\xymatrix{d_{0}d_{1}\Theta_{3}&d_{0}\Delta_{2}\ar[l]^-{\sim}_-{d_{0}w_{2}}\\
d_{0}\Delta_{1}\ar[u]_-{\sim}^-{d_{0}w_{1}}&C''\ar[l]_-{\sim}^{w^{C}}\ar[u]^-{\sim}_-{w''}}$$
A \emph{pair of weak $3\times 3$ diagrams} consists of two weak $3\times 3$ diagrams, 
the first one as before and the second one given by $\Theta_{1}'$, $\Theta_{2}'$, $\Theta_{3}'$, $\Theta_{4}'$, 
\begin{align*}
w_{1}'&\colon\Delta_{1}'\st{\sim}\To  d_{0}\Theta_{3}',&
w_{2}'&\colon\Delta_{2}'\st{\sim}\To  d_{0}\Theta_{4}';&&\begin{array}{c}\xymatrix{d_{0}d_{1}\Theta_{3}'&d_{0}\Delta_{2}'\ar[l]^-{\sim}_-{d_{0}w_{2}'}\\
d_{0}\Delta_{1}'\ar[u]_-{\sim}^-{d_{0}w_{1}'}&C''\ar[l]_-{\sim}^{(w^{C})'}\ar[u]^-{\sim}_-{w'''}}\end{array}
\end{align*}
such that, for $i=1,2$,
\begin{align*}
d_{i}d_{3}\Theta_{1}&=d_{i}d_{3}\Theta_{1}',&
d_{i}d_{3}\Theta_{2}&=d_{i}d_{3}\Theta_{2}',&
d_{i}d_{2}\Theta_{3}&=d_{i}d_{2}\Theta_{3}',\\
d_{i}d_{2}\Theta_{4}&=d_{i}d_{2}\Theta_{4}',&
d_{i}\Delta_{1}&=d_{i}\Delta_{1}',&
d_{i}\Delta_{2}&=d_{i}\Delta_{2}'.
\end{align*}
\end{defn}

\begin{prop}\label{weaktresportres}
Given a weak $3\times 3$ diagram in $\C{C}_{\bullet}$ as above, the following equation holds in~$\D{D}_{1}(\C{C}_{\bullet})$,
\begin{align*}
\grupo{[d_{2}\Delta_{1}],[d_{2}\Delta_{2}]}={}&
-[d_{3}\Theta_{1},d_{2}\Delta_{2}]-[\Delta_{1},w^{C}]^{[d_{2}d_{3}\Theta_{3}]}-[d_{2}\Theta_{3},d_{1}w_{1}]\\
&+[d_{2}\Theta_{4},d_{1}w_{2}]
+[\Delta_{2},w'']^{[d_{2}d_{3}\Theta_{4}]}
+[d_{3}\Theta_{2},d_{2}w_{1}].
\end{align*}
\end{prop}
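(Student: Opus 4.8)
The plan is to reduce Proposition~\ref{weaktresportres} to the ordinary $3\times 3$ relation of Proposition~\ref{tresportres} by ``absorbing'' the four weak equivalences $w_1$, $w_2$, $w''$, $w^C$ into the data and then cancelling. Recall that by definition $[\Delta,f]=[\Delta]+[f]^{[d_2\Delta]}$ for a weak triangle, and that Proposition~\ref{tresportres} is exactly the special case $w_1=w_2=w''=w^C=\mathrm{id}$. So the strategy is to start from the honest equation
$$\grupo{[d_0d_1\Theta_1],[d_0d_3\Theta_1]}=-[d_3\Theta_1]-[d_0\Theta_3]^{[d_2d_3\Theta_3]}-[d_2\Theta_3]+[d_2\Theta_4]+[d_0\Theta_4]^{[d_2d_3\Theta_4]}+[d_3\Theta_2]$$
coming from (R8) and (R9) applied to the $3\times 3$ diagram $\Theta_1,\dots,\Theta_4$, and then rewrite each term $[d_i\Theta_j]$ appearing on the right in terms of the weak-triangle brackets $[d_3\Theta_1,d_2\Delta_2]$, $[\Delta_1,w^C]$, $[d_2\Theta_3,d_1w_1]$, etc., keeping careful track of the crossed-module actions (the superscripts).

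First I would identify the relevant cofiber-type objects. Note $d_0\Theta_3$ has $d_0 d_0\Theta_3 = d_0 d_1\Theta_3$-related edges, so $w_1\colon \Delta_1\xrightarrow{\sim} d_0\Theta_3$ is a weak equivalence in $\C{C}_2$; applying relation (R7) to $w_1$ gives
$$[d_2 w_1]+[d_0 w_1]^{[d_2 d_0\Theta_3]}=-[d_0\Theta_3]+[d_1 w_1]+[\Delta_1],$$
which lets me solve for $[d_0\Theta_3]$ in terms of $[\Delta_1]$ and the face components of $w_1$. Similarly (R7) applied to $w_2$ expresses $[d_0\Theta_4]$ in terms of $[\Delta_2]$, and the auxiliary commutative square relating $d_0w_1$, $d_0w_2$, $w''$, $w^C$ via (R6) on composable weak equivalences in $\we(\C{C}_1)$ lets me trade $[d_1w_1]$, $[d_1 w_2]$, $[d_0 w^C]$-type terms against each other. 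The terms $[d_3\Theta_1]$ and $[d_3\Theta_2]$ stay as they are, but they get repackaged: $[d_3\Theta_1,d_2\Delta_2]=[d_3\Theta_1]+[\text{(some }f)]^{[d_2 d_3\Theta_1]}$ where $f$ is the morphism $d_2\Delta_2\to d_0 d_3\Theta_1$ obtained from the compatibility $d_0 d_3\Theta_1 = d_0 d_1\Theta_2$ and the square; the $3\times 3$ hypothesis $d_0\Theta_1 = s_1 d_0 d_3\Theta_1\sqcup s_0 d_0 d_1\Theta_1$ and relation (R9) are what convert the commutator $\grupo{[d_2\Delta_1],[d_2\Delta_2]}$ on the left into the commutator $\grupo{[d_0 d_1\Theta_1],[d_0 d_3\Theta_1]}$ up to correction terms $\grupo{[\cdot],\partial[\cdot]}$, using axiom (4) of Section~\ref{crossedmodules} in the form $c_1^{c_0}=c_1+\grupo{c_0,\partial c_1}$ to move actions across.

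The bookkeeping is purely formal once the dictionary is set up: every term is rewritten, and I expect a cascade of cancellations governed by the crossed-module identities (1)--(9), exactly as in the proof of \cite[Theorem~3.1]{k1wc} but with the extra weak equivalences carried along. The main obstacle is \textbf{the action terms}: each $[d_i\Theta_j]$ and each $[f]$ carries a superscript specifying conjugation by some $[d_\ast d_\ast\Theta_\ast]$, and when I substitute one expression into another these superscripts compose and must be reconciled using (1) $\partial(c_1^{c_0})=-c_0+\partial c_1 + c_0$ and (4) $c_1^{c_0}=c_1+\grupo{c_0,\partial c_1}$, generating secondary bracket terms that must themselves cancel by bilinearity (6),(7) and antisymmetry (8). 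Getting the base-points of all the weak triangles to match — i.e.\ verifying that $[d_3\Theta_1,d_2\Delta_2]$ really does have the stated $d_2\Delta_2$ as its ``$C$'' and that the compatibility square makes the composite morphisms agree on the nose — is where the $3\times 3$-diagram axioms (the last four displayed equalities in Definition of a $3\times 3$ diagram, together with the compatibility square of the weak $3\times 3$ diagram) are used crucially, and this is the step most likely to hide a sign or a reversed arrow. Once that is checked, the identity drops out. As the paper notes, Proposition~\ref{tresportres} is the case $w_1=w_2=w''=w^C=\mathrm{id}$, which serves as a sanity check on the signs throughout.
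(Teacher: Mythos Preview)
Your approach is essentially correct and matches what the paper does: the paper gives no argument at all beyond the one-line pointer ``The proof of \cite[Proposition 1.6]{k1wc} also works in this case,'' and that proof in \cite{k1wc} is exactly the computation you outline --- apply (R8) to the four $\Theta_i$ and (R9) once to obtain the honest $3\times3$ identity of Proposition~\ref{tresportres}, then feed in (R7) for the weak equivalences $w_1,w_2$ in $\C{C}_2$ and (R6) for the commutative square of weak equivalences in $\C{C}_1$, and chase the action superscripts. One small point: the term written ``$[d_{3}\Theta_{1},d_{2}\Delta_{2}]$'' in the statement should be read with the morphism $d_2 w_2\colon d_2\Delta_2\to d_0 d_3\Theta_1$ (similarly for its primed companion in the corollary); once you make that identification, the ``base-point matching'' you flag as the delicate step is immediate from the $3\times3$ identities $d_1\Theta_1=d_3\Theta_3$, $d_1\Theta_2=d_3\Theta_4$ together with $d_2d_0=d_0d_3$.
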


The proof of \cite[Proposition 1.6]{k1wc} also works in this case.

\begin{cor}
For any pair of weak $3\times 3$ diagrams in $\C{C}_{\bullet}$ as in the previous definition, the following relation between pairs of weak triangles in $\pi_{1}\D{D}_{*}(\C{C}_{\bullet})$ holds,
\begin{align*}\tag{S1}
&[d_{3}\Theta_{1},d_{2}\Delta_{2};d_{3}\Theta_{1}',d_{2}\Delta_{2}']-[d_{2}\Theta_{4},d_{1}w_{2};d_{2}\Theta_{4}',d_{1}w_{2}']+[\Delta_{1},w^{C};\Delta_{1}',(w^{C})']\\
&=[d_{3}\Theta_{2},d_{2}w_{1};d_{3}\Theta_{2}',d_{2}w_{1}']-[d_{2}\Theta_{3},d_{1}w_{1};d_{2}\Theta_{3}',d_{1}w_{1}']+[\Delta_{2},w'';\Delta_{2}',w'''].
\end{align*}
\end{cor}

This is an easy consequence of Proposition \ref{weaktresportres}, compare \cite[Theorem 3.1]{k1wc}.

\begin{cor}\label{sumalos}
Given two  pairs of weak triangles in $\C{C}_{\bullet}$, $(\Delta_{1},f_{1};\Delta_{1}',f_{1}')$ and $(\Delta_{2},f_{2};\Delta_{2}',f_{2}')$, the following relation holds in $\pi_{1}\D{D}_{*}(\C{C}_{\bullet})$,
\begin{align*}
[\Delta_{1}\sqcup \Delta_{2},f_{1}\sqcup f_{2};\Delta_{1}'\sqcup \Delta_{2}',f_{1}'\sqcup f_{2}']&=[\Delta_{1},f_{1};\Delta_{1}',f_{1}']+[\Delta_{2},f_{2};\Delta_{2}',f_{2}'].
\end{align*}
\end{cor}

\begin{proof}
Denote by $C$ the source of $f_{1}$ and $f_{1}'$, and by $C'$ the source of $f_{2}$ and $f_{2}'$. This corollary follows by 
applying the previous one to the following pair of weak $3\times 3$ diagrams:
\begin{align*}
\Theta_{1}^{i}&= s_{0}^{2}d_{2}\Delta_{i}\sqcup s_{2}\Delta_{i}',& \Theta_{2}^{i}&=  s_{0}s_{1}d_{2}\Delta_{i}\sqcup s_{1}\Delta_{i}', \\
\Theta_{3}&= s_{0}\Delta_{i}\sqcup s_{1}^{2}d_{1}\Delta_{i}', & \Theta_{4}&= s_{1}\Delta_{i}\sqcup s_{2}\Delta_{i}',\\
\Delta_{1}^{i}&=d_{0}\Theta_{3}^{i},&w_{1}^{i}&=1_{d_{0}\Theta_{3}^{i}},\\
\Delta_{2}^{i}&=s_{0}C\sqcup s_{1}C',&w_{2}&=s_{0}f_{i}\sqcup s_{1}f_{i}',\\
w''_{i}&=1_{C},&w^{C}_{i}&=f_{i}.
\end{align*}
\end{proof}

The following result is completely new. It yields a smaller presentation of $\D{D}_*(\C{C}_\bullet)$ which can be applied in some important situations.

\begin{prop}\label{porquillo}
If weak equivalences in $\C{C}_\bullet$ are isomorphisms, then $\D{D}_*(\C{C}_\bullet)$ has a presentation with generators (G1) and (G3) and relations (R2), (R8), (R9) and $[s_0^2(*)]=0$.
\end{prop}

\begin{proof}
This proof consists of an intensive use of the isomorphism lifting property in $\C{C}_{\bullet}$  assumed at the beginning of this section. 
Any isomorphism $f\colon \X\st{\sim}\r \X'$ in $\C{C}_1$ can be lifted to an isomorphism $\Phi(f)\colon s_1(\X)\st{\sim}\r\Delta(f)$ in $\C{C}_2	$ such that $d_0\Phi(f)$ is degenerate,
\begin{align*}
d_1\Phi(f)&=f,& d_2\Phi(f)&=1_\X.
\end{align*}
By (R7), $[f]=[\Delta(f)]$, therefore $\D{D}_*(\C{C})$ is generated by (G1) and (G3). By Remark \ref{lessrel}, we now just have to check that (R6) and (R7) are redundant. 

Given two composable isomorphisms in $\C{C}_1$,
$$\X\st{f}\To  \Y\st{g}\To  \Z,$$
we can take an isomorphism $\Xi^{f,g}\colon s_2s_1(\X)\st{\sim}\r\Theta(f,g)$ in $\C{C}_3$ such that, $d_0\Xi^{f,g}$ is degenerate,
\begin{align*}
d_1\Xi^{f,g}&=\Phi(g),& d_2\Xi^{f,g}&=\Phi(gf),& d_3\Xi^{f,g}&=\Phi(f). 
\end{align*}
If we apply (R8) to $\Theta(f,g)$ we obtain (R6).

Suppose now that $\Phi\colon\Delta_1\r\Delta_2$ is an isomorphism in $\C{C}_2$. We choose two isomorphisms in $\C{C}_2$,
$$\Delta_1\st{\Psi^1}\To \Delta'\st{\Psi^2}\To \Delta'',$$ 
with,
\begin{align*}
d_0(\Psi^1)&=1_{d_0\Delta_1},&d_1(\Psi^1)&=d_1\Phi,&d_2(\Psi^1)&=1_{d_2\Delta_1},\\
d_0(\Psi^2)&=d_2\Phi,&d_1(\Psi^2)&=1_{d_1\Delta_2},&d_2(\Psi^2)&=1_{d_2\Delta_1},
\end{align*}
and two isomorphisms in $\C{C}_3$, $$\Theta_1(\Phi)\st{\Xi^1}\longleftarrow s_1(\Delta)\st{\Xi^2}\To \Theta_2(\Phi),$$ with, 
\begin{align*}
d_0\Xi^1&=1_{s_1d_0\Delta_1},&d_1\Xi^1&=\Phi(d_1\Phi),&d_2\Xi^1&=\Psi^1,&d_3\Xi^1&=1_{\Delta_1},\\
d_0\Xi^2&=1_{d_0\Delta_1},&d_1\Xi^2&=\Phi,&d_2\Xi^2&=\Psi^2\Psi^1,&d_3\Xi^2&=\Phi(d_2\Phi).
\end{align*}
We also consider $\Theta_3(\Phi)=s_2\Delta(d_1\Phi)$ and $\Theta_4(\Phi)=s_2\Delta_2$.

Now (R7) follows from 
Proposition \ref{tresportres} applied to the $3\times 3$ diagram $\Theta_1(\Phi)$, $\Theta_2(\Phi)$, $\Theta_3(\Phi)$, $\Theta_4(\Phi)$. Recall that Proposition \ref{tresportres} only uses (R8) and (R9), hence we are done.
\end{proof}

\begin{defn}\label{R10}
an $S_{\bullet}$-category $\C{C}_{\bullet}$ has \emph{functorial coproducts} if  $\C{C}_{n}$, $n\geq 0$, is endowed with a monoidal structure $+$, strictly compatible with face and degeneracy functors, 
which is strictly associative,
\begin{align*}
(\X+\Y)+\Z&=\X+(\Y+\Z),
\end{align*}
strictly unital with unit object $s_{0}^{n}(*)$, 
\begin{align*}
{s_{0}^{n}(*)}+\X=X&=\X+{s_{0}^{n}(*)},
\end{align*}
and such that
$$\X\;=\;\X+{s_{0}^{n}(*)}\longrightarrow \X+\Y\longleftarrow {s_{0}^{n}(*)}+\Y\;=\;\Y$$
is always a coproduct diagram. Recall that $s_{0}^{n}(*)$ is an initial object in $\C C_{n}$, $n\geq 0$.

We define the stable quadratic module $\D{D}_*^+(\C{C}_\bullet)$ as the quotient of $\D{D}_*(\C{C}_\bullet)$ by the following extra relation,
\begin{itemize}
 \item[(R10)] $[s_0(\X)+ s_1(\Y)]=0$ for any pair of objects $\X$ and $\Y$ in $\C{C}_1$.
\end{itemize}
\end{defn}

\begin{prop}\label{sumnormalization}
Let $\C{C}_{\bullet}$ be an $S_{\bullet}$-category with functorial coproducts. If the set of objects of~$\C{C}_1$ is free as a monoid under $+$, then the natural projection,
$$\D{D}_*(\C{C}_\bullet)\onto \D{D}_*^+(\C{C}_\bullet),$$
is a weak equivalence. It actually forms part of a  strong deformation retraction.
\end{prop}

The proof is the same as the proof of \cite[Theorem 4.2]{k1wc} with the obvious change of terminology. The hypothesis is not very strong.

\begin{prop}\label{sum}
For any $S_{\bullet}$-category $\C{C}_\bullet$ there is another one $\C{C}'_\bullet$ with functorial coproducts whose simplicial monoid of objects is freely generated by the simplicial set of objects in $\C{C}_\bullet$ mod $*$ and its degeneracies, and such that the natural simplicial functor $\C{C}_\bullet\r\C{C}'_\bullet$ is an equivalence levelwise and restricts to a levelwise equivalence $\we(\C{C}_\bullet)\r\we(\C{C}'_\bullet)$.
\end{prop}

For the proof of this proposition one applies levelwise the $\operatorname{Sum}(-)$ construction in \cite[Proposition 4.3]{k1wc}.

\begin{lem}\label{suma1}
Given two weak triangles $(\Delta,f)$ and $(\Delta',f')$ in an $S_{\bullet}$-category with functorial coproducts $\C{C}_{\bullet}$, if we denote $C$ and $C'$ the source of $f$ and $f'$, respectively, then the following relation holds in~$\D{D}_{1}^{+}(\C{C}_{\bullet})$,
\begin{align*}
[\Delta{+} \Delta',f{+} f']&=[\Delta,f]^{[d_{1}\Delta']}+[\Delta',f']+\grupo{[d_{2}\Delta],[C']}.
\end{align*}
\end{lem}

\begin{proof}
Apply Proposition \ref{weaktresportres} to
\begin{align*}
\Theta_{1}&= s_{0}^{2}d_{2}\Delta{+} s_{2}\Delta',& \Theta_{2}&=  s_{0}s_{1}d_{2}\Delta{+} s_{1}\Delta', \\
\Theta_{3}&= s_{0}\Delta{+} s_{1}^{2}d_{1}\Delta', & \Theta_{4}&= s_{1}\Delta{+} s_{2}\Delta',\\
\Delta_{1}&=d_{0}\Theta_{3},&w_{1}&=1_{d_{0}\Theta_{3}},\\
\Delta_{2}&=s_{0}C{+} s_{1}C',&w_{2}&=s_{0}f{+} s_{1}f',\\
w''&=1_{C},&w^{C}&=f.
\end{align*}
\end{proof}

\begin{cor}\label{suma2}
Given two triangles $\Delta$, $\Delta'$ in an $S_{\bullet}$-category with functorial coproducts $\C{C}_{\bullet}$ and two weak equivalences $f\colon \X\st{\sim}\r \Y$, $f'\colon \X'\st{\sim}\r \Y'$ in $\C{C}_{1}$, the following relations hold in~$\D{D}_{1}^{+}(\C{C}_{\bullet})$,
\begin{align*}
[\Delta+\Delta']&=[\Delta]^{[d_{1}\Delta']}+[\Delta']+\grupo{[d_{2}\Delta],[d_{0}\Delta']},\\
[f+f']&=[f]^{[\Y']}+[f'].
\end{align*}
\end{cor}

\begin{lem}\label{simetria}
Let $\C{C}_{\bullet}$ be an $S_{\bullet}$-category with functorial coproducts, and $\X_{1},\dots, \X_{n}$  objects in $\C{C}_{1}$. 
Given a permutation of $n$ elements, $\sigma\in\operatorname{Sym}(n)$, we denote
$$\sigma_{\X_{1},\dots,\X_{n}}\colon \X_{\sigma_{1}}+\cdots+\X_{\sigma_{n}}\To  \X_{1}+\cdots \X_{n}$$
the isomorphism permuting the factors of the coproduct. The following formula holds in $\D{D}_{1}^{+}(\C{C})$,
\begin{align*}
[\sigma_{\X_{1},\dots,\X_{n}}]&=\sum_{
\begin{array}{c}
{}\vspace{-15pt}\\
\scriptstyle i>j \vspace{-5pt}\\
\scriptstyle \sigma_{i}<\sigma_{j}
\end{array}}\grupo{[\X_{\sigma_{i}}],[\X_{\sigma_{j}}]}
\end{align*}
\end{lem}

This lemma can be proved as \cite[Lemma 4.9]{k1wc}.

\begin{proof}[Proof of Theorem \ref{otromain}]
In this proof we translate the argument in the proof of \cite[Theorem 2.1]{k1wc} to our unified framework. By Proposition \ref{sum} we can suppose that $\C{C}_{\bullet}$ has functorial coproducts in such a way that the monoid of objects of $\C{C}_{1}$ is freely generated by a set $\mathbb{S}$ of non-degenerate objects, so we can work with $\D{D}_{1}^{+}(\C{C}_{\bullet})$ by Proposition \ref{sumnormalization}.

Any $x\in\D{D}_{1}^{+}(\C{C}_{\bullet})$ is a sum of triangles and weak equivalences in $\C{C}_{1}$ with coefficients~$\pm 1$. Therefore, by Corollary \ref{suma2}, the following equation holds,
\begin{align*}
x={}&-[f\colon \X\st{\sim}\r \Y]-[\Delta]+[\Delta']+[f'\colon \X'\st{\sim}\r \Y']&&\mod \grupo{\cdot,\cdot}\\
={}&-[f+1_{\X'}]-[\Delta+s_{0}d_{0}\Delta'+s_{1}d_{2}\Delta']\\
&+[s_{0}d_{0}\Delta+s_{1}d_{2}\Delta+\Delta']+[1_{\X}+f']&&\mod \grupo{\cdot,\cdot}.
\end{align*}

If $\partial(x)=0$ modulo commutators then, 
\begin{align*}
0={}&-[\X+\X']+[\Y+\X']-[d_{2}\Delta+d_{2}\Delta']-[d_{0}\Delta+d_{0}\Delta']+[d_{1}\Delta+d_{0}\Delta'+d_{2}\Delta']\\
&-[d_{0}\Delta+d_{2}\Delta+d_{1}\Delta']+[d_{0}\Delta+d_{0}\Delta']+[d_{2}\Delta+d_{2}\Delta']-[\X+\Y']+[\X+\X']\\
&\mod [\cdot,\cdot],
\end{align*}
and therefore,
\begin{align*}
[\Y+\X'+d_{1}\Delta+d_{0}\Delta'+d_{2}\Delta']={}&[\X+\Y'+d_{0}\Delta+d_{2}\Delta+d_{1}\Delta']\mod [\cdot,\cdot].
\end{align*}

The quotient of $\D{D}_{0}^{+}(\C{C}_{\bullet})$ by the commutator subgroup is the free abelian group with basis $\mathbb{S}$, hence there are objects $S_{1},\dots, S_{n}\in\mathbb{S}$ and a permutation $\sigma\in\operatorname{Sym}(n)$ with,
\begin{align*}
\Y+\X'+d_{1}\Delta+d_{0}\Delta'+d_{2}\Delta'&=S_{1}+\cdots+S_{n},\\
\X+\Y'+d_{0}\Delta+d_{2}\Delta+d_{1}\Delta'&=S_{\sigma_{1}}+\cdots+S_{\sigma_{n}}.
\end{align*}
In particular, there is an isomorphism,
$$\sigma_{S_{1},\dots,S_{n}}\colon \X+\Y'+d_{0}\Delta+d_{2}\Delta+d_{1}\Delta'\To  \Y+\X'+d_{1}\Delta+d_{0}\Delta'+d_{2}\Delta'.$$

By the isomorphism lifting property, there exists an isomorphism in $\C{C}_{2}$, $$\Phi\colon s_{0}\X+s_{0}\Y'+s_{0}d_{0}\Delta+s_{1}d_{2}\Delta+\Delta'\st{\sim}\To  \Delta_{2},$$
such that $d_{0}\Phi$ and $d_{2}\Phi$ are identity morphisms and $d_{1}\Phi=\sigma_{S_{1},\dots,S_{n}}$.

By Corollaries \ref{suma2} and \ref{simetria}, modulo the image of $\grupo{\cdot,\cdot}$,
\begin{align*}
x={}&-[f+1_{\X'}+1_{d_{0}\Delta+d_{0}\Delta'}]-[s_{0}\Y+s_{0}\X'+\Delta+s_{0}d_{0}\Delta'+s_{1}d_{2}\Delta']\\
&+[s_{0}\X+s_{0}\Y'+s_{0}d_{0}\Delta+s_{1}d_{2}\Delta+\Delta']+[1_{\X}+f'+1_{d_{0}\Delta+d_{0}\Delta'}] \\
={}&-[f+1_{\X'}+1_{d_{0}\Delta+d_{0}\Delta'}]-[s_{0}\Y+s_{0}\X'+\Delta+s_{0}d_{0}\Delta'+s_{1}d_{2}\Delta']\\
&+[\sigma_{S_{1},\dots,S_{n}}]\\
&+[s_{0}\X+s_{0}\Y'+s_{0}d_{0}\Delta+s_{1}d_{2}\Delta+\Delta']+[1_{\X}+f'+1_{d_{0}\Delta+d_{0}\Delta'}] \\
={}&-[f+1_{\X'}+1_{d_{0}\Delta+d_{0}\Delta'}]-[s_{0}\Y+s_{0}\X'+\Delta+s_{0}d_{0}\Delta'+s_{1}d_{2}\Delta']\\
&+[\Delta_{2}]+[1_{\X}+f'+1_{d_{0}\Delta+d_{0}\Delta'}] \\
={}&-[f+1_{\X'}+1_{d_{0}\Delta+d_{0}\Delta'}]^{[d_{2}\Delta+d_{2}\Delta']}-[s_{0}\Y+s_{0}\X'+\Delta+s_{0}d_{0}\Delta'+s_{1}d_{2}\Delta']\\
&+[\Delta_{2}]+[1_{\X}+f'+1_{d_{0}\Delta+d_{0}\Delta'}]^{[d_{2}\Delta+d_{2}\Delta']} \\
={}&[s_{0}\Y+s_{0}\X'+\Delta+s_{0}d_{0}\Delta'+s_{1}d_{2}\Delta',f+1_{\X'}+1_{d_{0}\Delta+d_{0}\Delta'};\\
&\Delta_{2},1_{\X}+f'+1_{d_{0}\Delta+d_{0}\Delta'}],
\end{align*}
i.e.~$x$ is represented by a pair of weak triangles modulo the image of $\grupo{\cdot,\cdot}$,
$$x=[\Delta_{1},f_{1};\Delta_{2},f_{2}]+y,\quad y\text{ in the image of }\grupo{\cdot,\cdot}.$$

Assume now that $\partial(x)=0$. Then $\partial(y)=0$ as well, therefore by \cite[Lemma 5.1]{k1wc} $y=\grupo{a,a}$ for some $a\in\D{D}_{0}(\C{C}_{\bullet})$, which is the free group of nilpotency class $2$ with basis $\mathbb{S}$. Since $y$ only depends on  $a$ mod $2$, we can suppose that $a=[S'_{1}]+\cdots+[S_{m}']=[M]$, $M=S_{1}'+\cdots +S_{m}'$, $S'_{i}\in\mathbb{S}$, therefore,
\begin{align*}
y&=\grupo{[M],[M]}=[s_{0}M+s_{1}M;s_{1}M+s_{0}M],
\end{align*}
is  a pair of triangles, in particular a pair of weak triangles, so $x$ is also a pair of weak triangles by Corollary \ref{sumalos}.
\end{proof}



\providecommand{\bysame}{\leavevmode\hbox to3em{\hrulefill}\thinspace}
\providecommand{\MR}{\relax\ifhmode\unskip\space\fi MR }
\providecommand{\MRhref}[2]{%
  \href{http://www.ams.org/mathscinet-getitem?mr=#1}{#2}
}
\providecommand{\href}[2]{#2}

\end{document}